\definecolor{olive}{rgb}{0.3, 0.4, .1}
\definecolor{fore}{RGB}{249,242,215}
\definecolor{back}{RGB}{51,51,51}
\definecolor{title}{RGB}{255,0,90}
\definecolor{dgreen}{rgb}{0.,0.6,0.}
\definecolor{gold}{rgb}{1.,0.84,0.}
\definecolor{JungleGreen}{cmyk}{0.99,0,0.52,0}
\definecolor{BlueGreen}{cmyk}{0.85,0,0.33,0}
\definecolor{RawSienna}{cmyk}{0,0.72,1,0.45}
\definecolor{Magenta}{cmyk}{0,1,0,0}
\newtheorem{proposition}{Proposition}[section]
\newtheorem{theorem}{Theorem}[section]
\newtheorem{corollary}{Corollary}[section]
\newtheorem{lemma}{Lemma}[section]
\newtheorem{remark}{Remark}[section]
\newtheorem{assumption}{Assumption}[section]
\def\R{\mathbb R}
\def\D{\mathbb D}
\def\E{\mathbb E}
\numberwithin{equation}{section}
\title[Forward stochastic integration w.r.t. fractional Brownian motion]{Forward stochastic integration for adapted processes w.r.t. Riemann-Liouville fractional Brownian motion (Full version). }
\author{Paulo Henrique da Costa$^1$}
\author{Alberto Ohashi$^2$}
\author{Francesco Russo$^3$}
\address{$1$ Departamento de Matem\'atica, Universidade de Bras\'ilia, 13560-970, Bras\'ilia - Distrito Federal, Brazil}\email{phcosta@unb.br}
\address{$2$ Departamento de Matem\'atica, Universidade de Bras\'ilia, 13560-970, Bras\'ilia - Distrito Federal, Brazil}\email{ohashi@mat.unb.br}
\address{$3$ ENSTA Paris, Institut Polytechnique de Paris,
 Unit\'e de Math\'ematiques appliqu\'ees, 828, boulevard des Mar\'echaux, F-91120 Palaiseau, France}
 \email{francesco.russo@ensta.fr}
\date{December 1st 2025}
\begin{document}

\begin{abstract}
  This paper provides the time-dependent $L^2$-martingale representation of the forward stochastic integral where the driving noise is the Riemann-Liouville fractional Brownian motion with parameter $\frac{1}{2} < H < 1$ and the integrand is a square-integrable adapted process. As a by-product, we obtain the exact $L^2$-isometry of the forward stochastic integrals based on suitable
  % regularity
  conditions on time-dependent martingale representations of adapted integrands combined with the Nelson's stochastic derivative of the underlying Gaussian driving noise.   
\end{abstract}

\maketitle

{\bf Key words and phrases.} Gaussian processes; Fractional Brownian motion;
Forward integral; Martingale representation; Nelson derivative.  

{\bf 2020 MSC}. 60H05; 60H25. 
%\tableofcontents

%\addcontentsline

\section{Introduction}

Stochastic integration and calculus is a classical and fundamental topic,
started by It\^o in \cite{Ito}.
If $X$ is a standard Brownian motion and $Y$
is a progressively measurable process on a probability space $(\Omega, \mathcal{F},\mathbb{P})$, then
the variance of the It\^o's integral is given by
\begin{equation}\label{eq:Vito}
  {\rm Var}\Big(\int_0^T Y dX\Big) = \mathbb{E}\int_0^T |Y_s|^2 ds,
  \end{equation}
  provided the right-hand side is finite for a finite terminal time
  $0 < T < \infty$. 

The isometry  \eqref{eq:Vito} admits well-known
 extensions to the case when an integrator $X$ is a local martingale.
 In this context $ds$ is replaced by 
 $d \langle X\rangle_s$, where $\langle X \rangle$
 is such that $X^2 - \langle X \rangle$ is a local martingale,
according to the Doob-Meyer decomposition. In this way,
  the right-hand side of \eqref{eq:Vito} is still
 the expectation of a Lebesgue-type integral.
 Generalizations of  \eqref{eq:Vito} are crucial for instance in
 studying fixed point theorems in the framework of
 stochastic differential equations and analysis  of numerical schemes.
 In general, if $X$ is not a semimartingale, then $\langle X \rangle$ is not available. 

The main goal of this paper is to
construct a stochastic integral $\int_0^T Y dX$,
when the {\it integrator} $X$ is a continuous Gaussian process
and the {\it integrand}  $Y$ is  progressively measurable. More importantly, we aim to construct $\int_0^T Y dX$ via a suitable generalization of
\eqref{eq:Vito}.

There are several attempts to define
stochastic integrals beyond semimartingales. Among them, we can list the following literature on this topic.
\begin{enumerate}
\item {\it The pure pathwise approach}.
 The stochastic integral $\int_0^T Y dX$ is considered as a
  limit in the sense of  Riemann-Stieltjes sums, 
  for all $\omega \in \Omega$. 
  The seminal article comes back to \cite{young} by L.C. Young,
  which, given $X$ having finite $p$-variation and $Y$
  finite $q$-variation with $\frac{1}{p} + \frac{1}{q} > 1$,
  gives sense to $\int_0^T Y dX$ and establishes proper
  continuity properties.
  This has been extended to the case of more irregular integrator $X$
  by the rough path integration theory, initiated by T. Lyons, see
  e.g. \cite{lyonsq} but the integral itself has been
  formulated by \cite{feyel2006, gubinelli2004}
  also via the development
  of the celebrated {\it sewing lemma}.
  An alternative approach to Young integral was proposed by M. Z\"ahle,
   \cite{zahle}.
\item {\it The Malliavin-Skorohod calculus approach.}
  When $X$ is a Gaussian process, one natural object for stochastic
  integration is the Skorohod divergence-integral, which exists
  for instance when $Y$ belongs
to the so called Sobolev-Watanabe space ($\D^{1,2}(H)$),
$H$ being the self-reproducing space.
In this case %%%  It is not clear about which integral we are
%% speaking about. One should mayb use the terminology of
%%%forward integral
  $\int_0^T YdX$ (in the
  sense of non-anticipating Riemann-Stieltjes sums) exists when
the Skorohod-divergence integral exists and 
when $Y$ has a suitable trace property.  
In this case even 
 the variance can be calculated, see e.g. \cite{nualart2006},
\cite{decreusefond, alos2001stochastic, alos2003, kruk2007, krukrusso, OhashiRusso}.
Unfortunately the conditions $Y \in \D^{1,2}(H)$ is far from being optimal.
\item {\it The stochastic sewing lemma techniques}.
  The technique comes back to \cite{Le}, which has adapted
  the original pathwise sewing lemma of \cite{feyel2006, gubinelli2004}
  taking into account  some martingale features of the integrator.
  This has allowed to prove existence of stochastic integral via
  limit of non-anticipating Riemann-Stieltjes sums and $L^p$-upper bounds for the integral
  when  the integrand  $Y$ is of the form $Y= g(\cdot,X)$. We refer e.g. to \cite{PerMats, hairerLi} for the
  implementation of the stochastic sewing lemma techniques
  in the characterization of stochastic integral.
\end{enumerate}
We recall that in the classical case of (\ref{eq:Vito}), the It\^o stochastic integral coincides with the forward integral $\int_0^T Yd^-X$, interpreted in the sense of the stochastic calculus via regularization introduced by \cite{rv1}, for a large class of progressively measurable processes $Y$. More precisely, the It\^o integral $\int_0^T Y_tdX_t$ is the limit in probability of
\begin{equation}\label{fapp}
\int_0^T Y_s \frac{X_{s+\varepsilon} - X_s}{\varepsilon}ds,
\end{equation}
as $\epsilon \downarrow 0$. We refer the reader to the monograph \cite{Russo_Vallois_Book} for more details on this approach. We also stress that when $Y$ is continuous, then the forward integral can be also expressed as the limit of non-anticipating Riemann sums (see e.g. \cite{fo}).

%%%%%%%% CONFUSING%%%%%%%%%%%%%% 
%One feature of the It\^o stochastic integral
%\eqref{eq:Vito} is that, whenever $Y$ is progressively measurable,
%it can coincide to a forward integral $\int_0^T Y d^-X,$, in the sense of
%being the limit in probability of
%\begin{equation}\label{fapp}
%\int_0^T Y_s \frac{X_{s+\varepsilon} - X_s}{\varepsilon}ds,
%\end{equation}
%see e.g. \cite{Russo_Vallois_Book}.
%When $Y$ is continuous, \eqref{fapp}
%can be also expressed as the limit of
% non-anticipating Riemann-Stieltjes sums
% (see e.g. \cite{fo}). We recall that
%if $X$ is a Brownian motion and $Y$
%is a suitable progressively measurable process,
%then $\int_0^T Y d^-X$
%is the It\^o integral.

In this paper, we concentrate on the case of forward integral
when $X$ is a fractional Brownian motion of Hurst index $\frac{1}{2} < H < 1$,
see \cite{NourdinBook} for a fairly complete monograph
on that process. We emphasize that this process is not a semimartingale.
In
\cite{OhashiRusso1}, the authors have evaluated the existence of the forward integral
$\int_0^T Y d^-X$ as an $L^2(\Omega)$ object,  
when $Y_t = g(t,X_t)$, where $g: [0,T] \times \R^d \rightarrow \R$
is a Borel function; we denominate this class of processes as {\it state dependent}. In \cite{OhashiRusso1}, the $L^2(\Omega)$-norm of 
$\int_0^T Y d^-X$ is evaluated in terms of
the intrinsic covariance of $X$. The present article aims to study the existence of
 $\int_0^T Y d^-X$ and fully characterize its $L^2(\Omega)$-structure, when $X= B$ is a Riemann-Liouville fractional Brownian motion with
 Hurst index $\frac{1}{2} < H < 1$, for a large class of path-dependent processes $Y$, adapted
 w.r.t. the canonical filtration $\mathbb{F}= (\mathcal{F}_t)_{t\ge 0}$ generated by $B$.
 Recall that $B$ can be expressed as a Volterra-type process
(see (\ref{kernel})) 
\begin{equation}\label{volterraIntro}
B_t = \int_{0}^{t}K(t,s)dW_s,
\end{equation}
where $W$ is a standard Brownian motion and $K(t,s)= \sqrt{2H}(t-s)^{H-\frac{1}{2}}$,
with the same canonical filtration. In order to reach our target, we establish the Volterra-type martingale representation of the forward stochastic integral $\int_0^T Yd^-B$ considered as a square-integrable random variable measurable w.r.t. $\mathcal{F}_T$, in terms of the martingale representation structure $(\varphi^{(1)}_Y, \varphi^{(2)}_Y)$ realizing 
\begin{eqnarray} \label{eq:YMartRep}
Y_t &=& \mathbb{E}[Y_t] + \int_{0}^t \varphi^{(1)}_Y(t,s)dW_s, \\ 
\varphi^{(1)}_Y(t, u) &=& \mathbb{E}[\varphi^{(1)}_Y(t,u)] + \int_0^u \varphi_Y^{(2)}(t,u; r)dW_r,
                          0 < u < t \le T. \nonumber
 \end{eqnarray}

This is done by inserting \eqref{eq:YMartRep} into the the approximation
(\ref{fapp}) and we take the $L^2(\Omega)$-limit as $\varepsilon \rightarrow 0^+$.

Under suitable conditions on $(Y,\varphi^{(1)}_Y)$, Theorem \ref{Koperator} and Corollary \ref{CorMTH} express the $\mathcal{F}_T$-measurable square-integrable variable 
$$\int_0^T Y_td^-B_t = \int_0^T \mathcal{K} Y(T,s)dW_s + \int_0^T \int_0^t \mathbb{E}[\varphi_Y^{(1)}(t,u)]\frac{\partial K}{\partial t}(t,u)dudt,$$ 
where 
\begin{equation}\label{Kintr}
\mathcal{K} Y (T,r) = \int_r^T \varphi^{(1)}_Y(t,r)\mathcal{D}_{r,t}B dt + \int_r^T \int_r^t \varphi^{(2)}_Y(t,v;r) \frac{\partial K}{\partial t}(t,v) dvdt
\end{equation}
$$
+ \int_r^T \mathbb{E} [Y_t|\mathcal{F}_r] \frac{\partial K}{\partial t}(t,r)dt.
$$
Theorem \ref{Koperator} and Corollary \ref{CorMTH} cover a large class of $\mathbb{F}$-adapted square-integrable processes $Y$ satisfying mild integrability conditions w.r.t. to driving noise $B$. See Sections \ref{S3} and \ref{S4} for examples.  

The random field 
\begin{equation}\label{nelsintr}
\mathcal{D}_{s,t}B:=\lim_{\epsilon \downarrow 0}\frac{1}{\epsilon}\mathbb{E}[B_{t+\epsilon}-B_t| \mathcal{F}_s]\quad L^2(\Omega)\text{-limit}, 0 < s < t,
\end{equation}
plays a central role in the exact $L^2(\Omega)$-limit of $\frac{1}{\epsilon}\int_0^T Y_t (B_{t+\epsilon} - B_t)dt$ because it allows us to get rid off of the ill-posedness of the asymptotic limit $\frac{1}{\epsilon}[B_{t+\epsilon} - B_t]$ as $\epsilon \downarrow 0$. In case $s=t$ and $X$ is an It\^o process $X$, $L^p(\Omega)$-limits of the form

$$\lim_{\epsilon \downarrow 0}\frac{1}{\epsilon}\mathbb{E}[X_{t+\epsilon}-X_t| \mathcal{G}_t],$$
where $\mathcal{G}_t = \sigma(X_s; s\le t)$, were extensively studied by Nelson \cite{nelson} in his dynamical theory of
Brownian diffusions.
%%%  Is it necessary to quote this article?
%  We refer to \cite{follmertr} for applications to the analysis of time-reversal of It\^o processes.
Nelson-type stochastic derivatives (w.r.t. $\sigma$-algebra generated by $B$) were studied by \cite{darses} in the context of SDEs driven by fractional Brownian motion with Hurst parameter $\frac{1}{2} < H < 1$, see also
\cite{darses1,darsesbruno}.    

One should observe that the random field $\mathcal{D}_{t,t}B$ does not exist along the diagonal (see Proposition 10 in \cite{darsesbruno}) and it is absolutely essential to consider the limit (\ref{nelsintr}) for $s < t$. 
It is important to highlight that reduced Nelson-type stochastic derivatives of the form

\begin{equation}
\lim_{\epsilon\downarrow 0} \frac{1}{\epsilon}\mathbb{E}[B_{t+\epsilon} - B_t| B_s, B_t]\quad L^2(\Omega)\text{-limit}, 0 < s < t,
\end{equation}
for $\frac{1}{4} < H < 1$ with $H\neq \frac{1}{2}$, have already played a central role in the isometries obtained by \cite{OhashiRusso1,OhashiRusso2}. In these works, the state-dependence of the integrands $Y_\cdot = g (\cdot, B_\cdot )$ allows us to work intrinsically on the level of the Gaussian space of the driving Gaussian noise $B$. 
In particular, in \cite{OhashiRusso1} one shows
%%% The authors are not the same
the existence, for this kind of processes,
of the forward integral $\int_0^t Y_s d^- B_s$ and we evaluate precisely its variance
in the term of the covariance function of $B$, when $g$ has polynomial growth.
%%% Mention Perkowski ?

In the present work, we go much beyond the state-dependent case treated in \cite{OhashiRusso1}. We investigate natural integrability conditions of a generic $\mathbb{F}$-adapted square-integrable integrand w.r.t. the driving noise $B$. In order to investigate the exact $L^2(\Omega)$-isometry of the forward integral in this general context, we need to invoke the classical It\^o calculus w.r.t. the underlying Brownian motion $W$.
In order to find the {\it largest possible class} of integrands $Y$, taking into account previous considerations when $Y$ is a state dependent case, one could think that the forward integral exists
for every bounded progressively measurable process $Y$.
It is important to mention (see Proposition 1.2 of \cite{rv1}) that the existence of the forward integral $\int_0^T Yd^-B$ for all bounded predictable processes $Y$ forces $B$ to be a semimartingale.
Therefore, in order to study the existence of the forward integral
(in  $L^2(\Omega)$) for a given  $\mathbb{F}$-adapted square-integrable integrand $Y$, we need to impose more delicate conditions
on the ''derivative'' $\varphi^{(1)}_Y$  w.r.t. the driving noise, coupled with the Nelson derivative (\ref{nelsintr}).

In case $Y$ is a deterministic function in the reproducing kernel Hilbert space of $B$,
%for which the Wiener integral is defined,
then $\varphi^{(1)}_Y = \varphi^{(2)}_Y=0$, $\mathbb{E}[Y_t|\mathcal{F}_r] = Y_t$ and a direct consequence of Theorem \ref{Koperator} is the well-known identity    
\begin{eqnarray*}
\text{Var}\Big( \int_0^T Y_t d^-B_t \Big) &=& \int_0^T \Bigg|\int_r^T Y_t\frac{\partial K}{\partial t}(t,r)dt\Bigg|^2dr\\
  &=& \int_0^T \int_0^T Y_tY_s \frac{\partial^2 R}{\partial t \partial s}dsdt,
\end{eqnarray*} 
for the Wiener integral, see e.g. \cite{nualart2006} and Section 5 of \cite{kruk2007}. In case $Y$ is random, then the random field $\{\mathbb{E}[Y_t|\mathcal{F}_r]; r < t\}$ is not enough to describe the stochastic forward integral and the presence of $(\varphi^{(1)}_Y, \varphi^{(2)}_Y)$ encodes the exact regularity condition one needs to check in order to describe  $\int_0^T Y_t d^-B_t.$ 
We observe that the martingale representation provides the identities 
$$\|Y_t - \mathbb{E}[Y_t]\|_{L^2(\Omega)} = \| \varphi_Y^{(1)}(t,\cdot)\|_{L^2(\Omega\times[0,t])},$$
\begin{equation}\label{phi2eq}
\|\varphi_Y^{(1)}(t,s) - \mathbb{E}[\varphi_Y^{(1)}(t,s)] \|_{L^2(\Omega)} = \| \varphi_Y^{(2)}(t,s; \cdot)\|_{L^2(\Omega\times [0,s])},
\end{equation}
for $s < t$.

In the case $H = \frac{1}{2}$, as mentioned in \eqref{eq:Vito},
it is enough to work on the level of $t\mapsto Y_t$. Theorem \ref{Koperator} shows that the stochastic calculus w.r.t. fractional Brownian motion with $H > \frac{1}{2}$ requires a more intricate condition on the level of the martingale derivative $\varphi^{(1)}_Y$ rather than only on $t\mapsto Y_t$. This is illustrated by the components 
$$\int_r^T \varphi^{(1)}_Y(t,r)\mathcal{D}_{r,t}B dt, \int_r^T \int_r^t \varphi^{(2)}_Y(t,v;r) \frac{\partial K}{\partial t}(t,v) dvdt, $$
which mix $(\varphi^{(1)}_Y,\varphi^{(2)}_Y)$ with the weights $(\mathcal{D}B,\partial K)$ in (\ref{Kintr}). At this point, it is important to highlight that $\varphi^{(1)}_Y$ is actually the main component of the forward integral. Indeed, one can prove that the existence of $\int_r^T \int_r^t \varphi^{(2)}_Y(t,v;r) \frac{\partial K}{\partial t}(t,v) dvdt$ is ensured by a mild integrability condition on the first martingale derivative $\varphi^{(1)}_Y$ due to essentially (\ref{phi2eq}) and the fact that $\frac{\partial K}{\partial t} $   is deterministic. We refer here to Assumption
\ref{I4}, Lemma \ref{bassa3} and Corollary \ref{CorMTH}.

Interestingly, we stress that in the state-dependent case $Y_\cdot = g(\cdot, B_\cdot)$, it is known that only integrability condition on the level of $g$ is enough as mentioned in \cite{OhashiRusso1}. This article shows that in the general path-dependent case, integrability condition on the level of $t\mapsto Y_t$ is not enough which also coincides with Proposition 1.2 in \cite{rv1} as pointed out above.    

It is important to highlight that the present work establishes the Volterra-type martingale representation of the forward integral and the $L^2(\Omega)$-isometry is a by-product of such representation. We achieve these results by exploiting
the underlying Brownian motion generating the Riemann-Liouville fractional Brownian motion. In \cite{OhashiRusso1}, the authors establish the isometry via an intrinsic argument on the level of the Gaussian space of the driving Gaussian noise $B$. Lastly, we mention that our framework goes much beyond the classical philosophy of Malliavin calculus based on the well-known decomposition in terms of the Skorohod operator plus the trace of the Malliavin derivative. In this direction, we refer the reader to e.g. \cite{alos2003,Russo_Vallois_Book, kruk2007, decreusefond, biaginibook,nualart2006}. Sections \ref{S3} and \ref{S4} show a large class of examples which is out of the scope of Malliavin calculus.     

The article is organized as follows. Section \ref{Wienersec} presents the simplest case of the Paley-Wiener integral as a forward stochastic integral. Section \ref{S2} presents the main result of the paper. Sections \ref{S3} and \ref{S4} present a class of examples ranging from state-dependent to fully path-dependent integrands.  

%After this introduction and a short notational section, in Section \ref{S2} we expand the isometry of
%forward integral which allows potentially to expand the class of integrand $Y$.
%Section \ref{S3} is devoted to the case when the integrand is state dependent
%and finally in Section \ref{S4} we work out the path-dependent case with a class of examples.

\section{The Wiener integral case}\label{Wienersec}

As mentioned earlier, in this article, $0 < T < \infty$ is a positive finite terminal time. Throughout this note, we write $a\lesssim b$ for two positive quantities to express an estimate of the form
$a \le C b$, where $C$ is a generic constant which may differ from line to line.
If $a\lesssim b$ and $b\lesssim a$, we simply write $a \simeq b$.
If $\gamma$ is a parameter, then $a\lesssim_{\gamma} b$ means that $a \le C b$, where the constant $C$ depends on $\gamma$.
Moreover, whenever necessary, we will adopt the convention that processes $X$ defined on $[0,T]$ can be extended to $[0,\infty)$ by setting $X_t=0$ for $t>T$. We fix a probability space $\big( \Omega, \mathcal{F}, \mathbb{P}\big)$ carrying a standard Brownian motion $W$ which generates a complete filtration $\mathbb{F} := (\mathcal{F}_t)_{t \ge 0}$. Conditional expectation of a random variable $Z$ w.r.t. the filtration $\mathbb{F}$ will be denoted by 

$$\mathbb{E}^s [Z]:= \mathbb{E}\big[ Z | \mathcal{F}_s  \big],$$
for $ s \ge 0$. Let us denote 

\begin{equation}\label{kernel}
K(t,s):= \sqrt{2H}(t-s)^{H-\frac{1}{2}};~0 < s< t,
\end{equation}
and $K(t,s):=0$ for $s\ge t$. The Riemann-Liouville fractional Brownian motion (henceforth abbreviated by RLFBM) will be represented via the Volterra-type representation 
\begin{equation}\label{volterra}
B_t = \int_{0}^{t}K(t,u)dW_u, 
\end{equation}
for $H \in (\frac{1}{2},1)$.

\begin{lemma}\label{repSECDER}
By It\^o isometry, the covariance kernel $R$ of the RLFBM can be written as

$$R(s,t) = \int_{0}^{s\wedge t} K(t,u) K(s,u) du.$$ 

Therefore, if $\frac{1}{2} < H < 1$, then $K$ vanishes on the diagonal and hence 

$$\frac{\partial^2 R}{\partial t \partial s}(s,t) = \int_{0}^{s\wedge t}\frac{\partial K}{\partial t}(t,u) \frac{\partial K}{\partial s} (s,u) du,$$
for $ s \neq t$. Moreover, for $s \neq t$,
\begin{equation}\label{secR}
\frac{\partial^2 R}{\partial t \, \partial s}(t,s)
= 2H \left(H - 1/2\right)^2
|t-s|^{2H-2}\text{beta}_{\frac{t \wedge s}{\,t \vee s\,}}\!\left(H - 1/2,\, 2 - 2H\right),
\end{equation}
%%%  It would be better to give a reference for this.
%%%Or insert a proof.
where $\text{beta}_x(a,b) = \int_0^x u^{a-1}(1-u)^{b-1}du$ is the incomplete beta function.
\end{lemma}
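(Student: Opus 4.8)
The plan is to verify the three assertions of Lemma~\ref{repSECDER} in sequence, starting from the Volterra representation \eqref{volterra}. First, since $B_t = \int_0^t K(t,u)\,dW_u$ with $K(t,u)=0$ for $u\ge t$, the It\^o isometry gives immediately
\[
R(s,t) = \mathbb{E}[B_sB_t] = \mathbb{E}\Big[\int_0^\infty K(s,u)\,dW_u \int_0^\infty K(t,u)\,dW_u\Big] = \int_0^{s\wedge t} K(s,u)K(t,u)\,du,
\]
which is the first claim. Then, for the mixed second derivative, I would fix $s<t$ (the case $s>t$ is symmetric) and differentiate under the integral sign: because $K(t,u)=\sqrt{2H}(t-u)^{H-1/2}$ is smooth in $t$ for $u<t$ with $\frac{\partial K}{\partial t}(t,u)=\sqrt{2H}(H-\tfrac12)(t-u)^{H-3/2}$, and the upper limit $s\wedge t = s$ does not depend on $t$ when $t>s$, differentiating in $t$ is justified by dominated convergence on the fixed interval $(0,s)$, where the singularity of $(t-u)^{H-3/2}$ at $u=t$ is avoided since $u\le s<t$. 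Differentiating the resulting expression once more in $s$, now the upper limit does depend on $s$; however, since $K$ vanishes on the diagonal (here one uses $H>1/2$ so that $(t-u)^{H-1/2}\to 0$ as $u\uparrow t$, and likewise the boundary term $\frac{\partial K}{\partial t}(t,u)K(s,u)\big|_{u=s}$ vanishes because $K(s,s)=0$), the Leibniz boundary term drops out and one is left with $\int_0^{s\wedge t}\frac{\partial K}{\partial t}(t,u)\frac{\partial K}{\partial s}(s,u)\,du$, the second claim.

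For the third and most substantial claim, the closed form \eqref{secR}, I would plug in the explicit derivatives. With $s<t$,
\[
\frac{\partial^2 R}{\partial t\,\partial s}(t,s) = 2H\Big(H-\tfrac12\Big)^2 \int_0^s (t-u)^{H-3/2}(s-u)^{H-3/2}\,du.
\]
Now I would perform the substitution that normalizes the integral to the incomplete beta function. The natural change of variables is $u = s v$ is not quite right; instead I would set $v$ so that the two factors combine: writing $t-u = (t-s) + (s-u)$ and substituting $s-u = (t-s)\,w/(1-w)$ or, more cleanly, $u = s\,v$ won't diagonalize the powers, so I would instead try $w = (s-u)/(t-u)$, i.e. solve $u = \frac{s - wt}{1-w}$, which maps $u\in(0,s)$ to $w\in(0, s/t)$. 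Under this substitution one computes $t-u = (t-s)/(1-w)$, $s-u = w(t-s)/(1-w)$, and $du = \frac{(t-s)}{(1-w)^2}\,dw$ (up to sign bookkeeping), so the integrand becomes, after collecting powers of $(t-s)$ and $(1-w)$,
\[
(t-s)^{2H-2}\int_0^{s/t} w^{H-3/2}(1-w)^{2-2H-1}\,dw = (t-s)^{2H-2}\,\mathrm{beta}_{s/t}\!\Big(H-\tfrac12,\,2-2H\Big),
\]
which matches \eqref{secR} after replacing $t-s$ by $|t-s|$ and $s/t$ by $(t\wedge s)/(t\vee s)$ to cover both orderings. The exponents check out: the power of $w$ is $(H-3/2) = (H-1/2) - 1$, matching the first beta parameter $a = H-1/2$, and the power of $(1-w)$ is $(2-2H) - 1$, matching $b = 2-2H$; note $H-1/2>0$ and $2-2H>0$ exactly because $\tfrac12<H<1$, so the incomplete beta integral converges.

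The main obstacle I anticipate is getting the change-of-variables bookkeeping exactly right — in particular tracking the Jacobian and the orientation of the substitution $w=(s-u)/(t-u)$, and confirming that the accumulated powers of $(t-s)$ and $(1-w)$ collapse to precisely $w^{H-3/2}(1-w)^{1-2H}$ with the overall prefactor $(t-s)^{2H-2}$; a sign error or an off-by-one in the exponents is the easiest way to go wrong. A secondary technical point is rigorously justifying the differentiation under the integral sign for $\frac{\partial}{\partial s}$ when the upper limit $s\wedge t$ moves, which requires the diagonal-vanishing of $K$ noted in the statement; this is where the hypothesis $H>\tfrac12$ is genuinely used, and I would state it carefully rather than wave it through. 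Everything else is routine manipulation of the explicit kernel.
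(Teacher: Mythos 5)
Your proposal is correct and follows essentially the same route as the paper: the first two claims via It\^o isometry and differentiation under the integral (with the diagonal vanishing of $K$ killing the Leibniz boundary term), and the closed form \eqref{secR} via a substitution reducing $\int_0^s(t-u)^{H-3/2}(s-u)^{H-3/2}du$ to $(t-s)^{2H-2}\,\mathrm{beta}_{s/t}(H-\tfrac12,2-2H)$. Your single change of variables $w=(s-u)/(t-u)$ is exactly the composition of the paper's three successive substitutions ($x=s-u$, then $z=x/(t-s)$, then $y=z/(1+z)$), and your exponent bookkeeping checks out.
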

\begin{proof}
The first two assertions are immediate. We provide the details of (\ref{secR}). For $0 < s < t$, recall
\[
\frac{\partial^2 R}{\partial t\,\partial s}(s,t)
 = 2H\alpha^2 \int_0^s (s-u)^{\alpha-1}(t-u)^{\alpha-1}\,du,
\qquad \alpha = H-\tfrac12.
\]

Let $w = t-s > 0$ and change variables $x = s-u$:
\[
\int_0^s (s-u)^{\alpha-1}(t-u)^{\alpha-1}\,du
 = \int_0^{s} x^{\alpha-1}(x+w)^{\alpha-1}\,dx.
\]

Next, change the variables $z = x/w$ so that 
\[
\int_0^{s} x^{\alpha-1}(x+w)^{\alpha-1}\,dx
 = w^{2\alpha-1} \int_0^{s/(t-s)} z^{\alpha-1}(1+z)^{\alpha-1}\,dz.
\]

Now let $y = \dfrac{z}{1+z}$, so that $z = \dfrac{y}{1-y}$ and $dz = \dfrac{dy}{(1-y)^2}$.
This yields
\[
\int_0^{s} (s-u)^{\alpha-1}(t-u)^{\alpha-1}\,du
 = w^{2\alpha-1}\int_0^{s/t} y^{\alpha-1}(1-y)^{-2\alpha}\,dy.
\]

Recognizing the incomplete Beta function
\[
\text{beta}_x(p,q) = \int_0^x u^{p-1}(1-u)^{q-1}\,du,
\]
we obtain
\[
\int_0^{s} (s-u)^{\alpha-1}(t-u)^{\alpha-1}\,du
 = (t-s)^{2\alpha-1} \text{beta}_{s/t}(\alpha,\, 2-2H),
\]
since $2-2H = 1 - 2\alpha$. Hence, for $0 < s < t$,
\[
\frac{\partial^2 R}{\partial t\,\partial s}(s,t)
 = 2H\bigl(H-\tfrac12\bigr)^2 (t-s)^{2H-2}\,
\text{beta}_{s/t}\!\bigl(H-\tfrac12,\, 2-2H\bigr).
\]
By the symmetry of $R(s,t)$, the case $t<s$ follows by exchanging $s$ and $t$.
\end{proof}

\begin{remark}
The RLFBM with $\frac{1}{2} < H < 1$ has a covariance measure structure in the sense of \cite{kruk2007}. That is, the planar increments of the covariance kernel $R$ can be extended to a signed finite measure on $[0,T]^2$, see Definition 3.1 in \cite{kruk2007}. In the case of RLFBM with $\frac{1}{2} < H < 1$, the covariance measure is a finite positive absolutely continuous measure w.r.t. Lebesgue on $[0,T]^2$ whose density is $\frac{\partial^2 R}{\partial t \partial s}$. In case, $0 < H < \frac{1}{2}$, the measure is $\sigma$-finite. In this direction, see \cite{krukrusso,OhashiRusso}.    
\end{remark}

The increments of RLFBM can be written as

\begin{equation}\label{ortDECFBM}
B_{t}-B_s = \int_{0}^s \{K(t,u) - K(s,u)\}dW_u + \int_s^t K(t,u)dW_u,
\end{equation}
for $0\le s < t$. From (\ref{ortDECFBM}), we shall write 

\begin{eqnarray}\label{fdec}
\frac{1}{\epsilon}\big[B_{t+\epsilon}-B_t\big] &=&  \int_{0}^{t} \frac{1}{\epsilon}\Delta_\epsilon K (t,u)dW_u + \frac{1}{\epsilon}\int_{t}^{t+\epsilon} K(t+\epsilon, u) dW_u,
\end{eqnarray}
where we denote  
\begin{equation}\label{DeltaK}
\Delta_\epsilon K(t,u)= \Big\{K(t+\epsilon,u) - K(t,u)\Big\} \mathds{1}_{(0,t)}(u).
\end{equation}

Let us recall the definition of the forward stochastic integral in the sense of regularization (see e.g. \cite{Russo_Vallois_Book})

$$\int_0^T Y_t d^-B_t:=\lim_{\epsilon\downarrow 0}\frac{1}{\epsilon}\int_0^T Y_t [B_{t+\epsilon}-B_t]dt,$$
whenever this limit exists in $L^2(\Omega)$ for a measurable process $Y$.   

For pedagogical reasons, we now  discuss the class of deterministic functions for which the Wiener integral exists in the sense of regularization.
Let $|\mathcal{H}|$ be the space of Borel measurable functions $g:[0,T]\rightarrow \mathbb{R}$ such that 

$$\int_0^T \Bigg( \int_r^T |g_t| \frac{\partial K}{\partial t}(t,r)dt \Bigg)^2dr = \int_0^T\int_0^T |g_tg_s| \frac{\partial^2 R}{\partial t \partial s}(t,s)dsdt < \infty.$$

%\begin{remark} \label{rmk:absValH}
%From (\ref{secR}) and Lemma 5.1.1. in \cite{nualart2006}, we know that

%$$\| g\|_{|\mathcal{H}|}\le C \| g\|_{L^{\frac{1}{H}}[0,T]},$$
%for a constant $C$ which only depends on $H$. Then, $L^2[0,T]\hookrightarrow |\mathcal{H}|$ and $|\mathcal{H}|$ is included in the Reproducing Kernel Hilbert space of $B$. 
%\end{remark}

\begin{lemma}\label{remlemma}
Let $0<\alpha<\frac{1}{2}$ and $g\in L^p[0,T]$ with
\[
p>\frac{2}{1+2\alpha}.
\]
For $\varepsilon>0$, define
\[
F_\varepsilon(u):=\frac{1}{\varepsilon}\int_{u-\varepsilon}^{u}
g(t)\,\lvert t+\varepsilon-u\rvert^\alpha dt,
\]
for $0 < u < T$. Then
\[
\int_0^T \lvert F_\varepsilon(u)\rvert^2 du \rightarrow 0,
\]
as $\epsilon \downarrow 0$. 
\end{lemma}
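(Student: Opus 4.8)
The goal is to show that $F_\varepsilon \to 0$ in $L^2(0,T)$ as $\varepsilon\downarrow 0$, where $F_\varepsilon(u)=\frac1\varepsilon\int_{u-\varepsilon}^u g(t)\,(t+\varepsilon-u)^\alpha\,dt$ (interpreting $g(t)=0$ for $t<0$). The plan is to view $F_\varepsilon$ as a convolution of $g$ with an approximate identity whose total mass tends to zero. Indeed, writing $t = u - s$ with $s\in(0,\varepsilon)$, we have
\[
F_\varepsilon(u) = \frac1\varepsilon\int_0^\varepsilon g(u-s)\,(\varepsilon-s)^\alpha\,ds = \int_0^\infty g(u-s)\,\psi_\varepsilon(s)\,ds,
\]
where $\psi_\varepsilon(s) := \frac1\varepsilon (\varepsilon - s)^\alpha \mathds{1}_{(0,\varepsilon)}(s)\ge 0$. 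A direct computation gives $\|\psi_\varepsilon\|_{L^1} = \frac{1}{\varepsilon}\int_0^\varepsilon (\varepsilon-s)^\alpha ds = \frac{\varepsilon^\alpha}{\alpha+1}\to 0$. So this is \emph{not} a genuine approximate identity (the mass vanishes), which is exactly why we expect $F_\varepsilon\to 0$ rather than $F_\varepsilon\to g$.

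First I would treat the easy case $g\in L^\infty$ (or $g$ continuous with compact support): then $|F_\varepsilon(u)|\le \|g\|_\infty\|\psi_\varepsilon\|_{L^1} = \|g\|_\infty\,\varepsilon^\alpha/(\alpha+1)$ pointwise, so $\|F_\varepsilon\|_{L^2(0,T)}\le \sqrt{T}\,\|g\|_\infty\,\varepsilon^\alpha/(\alpha+1)\to 0$. Next I would handle general $g\in L^p$ by an $L^2$-boundedness estimate uniform in $\varepsilon$ combined with density. The key quantitative step is a convolution (Young-type) inequality: since $F_\varepsilon = g * \psi_\varepsilon$ (with the above sign conventions), Young's inequality gives $\|F_\varepsilon\|_{L^2}\le \|g\|_{L^p}\,\|\psi_\varepsilon\|_{L^q}$ where $\frac1p + \frac1q = \frac32$, i.e. $\frac1q = \frac32 - \frac1p$. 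We need $q\ge 1$, i.e. $\frac1p \le \frac12$, i.e. $p\ge 2$ — but the hypothesis only gives $p > \frac{2}{1+2\alpha}$, which can be less than $2$. In that regime I would instead use the sharper Hardy–Littlewood–Sobolev / Young inequality on $(0,T)$ allowing $q<1$ is not available, so the right tool is: estimate $\|\psi_\varepsilon\|_{L^q}$ only for the admissible range and, for small $p$, exploit that $\psi_\varepsilon$ is supported on an interval of length $\varepsilon$. Concretely, $\|\psi_\varepsilon\|_{L^q}^q = \varepsilon^{-q}\int_0^\varepsilon (\varepsilon-s)^{\alpha q}ds = \varepsilon^{-q}\,\frac{\varepsilon^{\alpha q + 1}}{\alpha q + 1}$, so $\|\psi_\varepsilon\|_{L^q} = c_{\alpha,q}\,\varepsilon^{\alpha + \frac1q - 1}$, which tends to $0$ provided $\alpha + \frac1q - 1 > 0$, i.e. $\frac1q > 1-\alpha$; with $\frac1q = \frac32 - \frac1p$ this reads $\frac32 - \frac1p > 1-\alpha$, i.e. $\frac1p < \frac12 + \alpha = \frac{1+2\alpha}{2}$, i.e. $p > \frac{2}{1+2\alpha}$ — precisely the hypothesis. (One also needs $q\ge 1$, i.e. $p\ge 2$; when $\frac{2}{1+2\alpha} < p < 2$ this fails, and there I would first shrink the exponent by Hölder on the finite interval $[0,T]$: $g\in L^p[0,T]$ with $p<2$ does \emph{not} embed in $L^2$, so instead I would write $F_\varepsilon$'s $L^2$ norm directly.)

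To make the $p<2$ case clean, I would argue as follows. By Minkowski's integral inequality applied to $F_\varepsilon(u) = \int_0^\varepsilon g(u-s)\psi_\varepsilon(s)\,ds$,
\[
\|F_\varepsilon\|_{L^2(0,T)} \le \int_0^\varepsilon \psi_\varepsilon(s)\,\|g(\cdot - s)\|_{L^2(0,T)}\,ds.
\]
But $g\in L^p$ with $p<2$ need not be in $L^2$, so this still fails — hence the honest route for all $p>\frac{2}{1+2\alpha}$ is the single Young-inequality estimate $\|F_\varepsilon\|_{L^2}\le \|g\|_{L^p}\|\psi_\varepsilon\|_{L^q}$ with $\frac1q = \frac32-\frac1p$, which is valid as a convolution inequality on $\R$ as long as $1\le q\le\infty$; the borderline exponent $\frac{2}{1+2\alpha}$ corresponds exactly to $\|\psi_\varepsilon\|_{L^q}$ staying bounded, and strict inequality gives the decay $\varepsilon^{\alpha+\frac1q-1}\to 0$. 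I would therefore (i) verify $q\in[1,\infty]$ under the hypothesis (noting $p>\frac{2}{1+2\alpha}\ge 1$ forces $\frac1q = \frac32 - \frac1p < \frac12$... need also $\frac1q\le 1$ which holds since $\frac1p\le 1$), (ii) invoke Young's convolution inequality on $\R$ after the zero-extension of $g$, and (iii) plug in $\|\psi_\varepsilon\|_{L^q}=c_{\alpha,q}\varepsilon^{\alpha+1/q-1}$ and observe the exponent is positive iff $p>\frac{2}{1+2\alpha}$.

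\textbf{Main obstacle.} The only delicate point is the exponent bookkeeping in Young's inequality and checking that the admissible range of $q$ is nonempty and matches the stated threshold on $p$ — in particular handling the subrange $\frac{2}{1+2\alpha}<p<2$ where $g\notin L^2$, so one cannot reduce to the bounded case by a crude Hölder step and must use the convolution inequality in its full strength. Everything else (the explicit $L^q$ norm of $\psi_\varepsilon$, the density argument, the monotone behavior of the exponent) is routine.
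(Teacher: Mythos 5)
Your approach is the same as the paper's: extend $g$ by zero, write $F_\varepsilon = g * \kappa_\varepsilon$ with $\kappa_\varepsilon(s) = \varepsilon^{-1}(\varepsilon-s)^\alpha \mathds{1}_{[0,\varepsilon]}(s)$, compute $\|\kappa_\varepsilon\|_{L^q} = c_{\alpha,q}\,\varepsilon^{\alpha-1+\frac1q}$, apply Young's convolution inequality with $\frac1p+\frac1q=\frac32$, and observe that the exponent $\alpha-1+\frac32-\frac1p$ is positive exactly when $p>\frac{2}{1+2\alpha}$. That core computation is correct and is precisely the paper's proof.

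Where you go astray is the admissibility bookkeeping, and you have the inequality backwards. From $\frac1q=\frac32-\frac1p$, the requirement $q\ge 1$ (i.e. $\frac1q\le 1$) is equivalent to $\frac1p\ge\frac12$, i.e. $p\le 2$ --- not $p\ge 2$ as you write. Consequently the subrange $\frac{2}{1+2\alpha}<p\le 2$, which you single out as the ``main obstacle'' because $g\notin L^2$ there, is precisely where Young's inequality applies verbatim with $q\in[1,2)$; nothing needs fixing there, and your detour through Minkowski's inequality is unnecessary. The case that actually falls outside the index choice is $p>2$, where $\frac1q=\frac32-\frac1p>1$ gives $q<1$; your final verification (i) asserts that $\frac1q\le 1$ follows from $\frac1p\le 1$, which is a non sequitur, so as written that step fails for $p>2$. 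The repair is immediate: on the finite interval $[0,T]$ one has $L^p\subset L^2$ for $p\ge 2$, so you may replace $p$ by $2$ (which still satisfies $2>\frac{2}{1+2\alpha}$), or equivalently take $q=1$ and use $\|\kappa_\varepsilon\|_{L^1}=\varepsilon^\alpha/(\alpha+1)\to 0$ --- exactly the observation in the paper's remark following the lemma. With that correction your argument coincides with the paper's.
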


\begin{proof}
Extend $g$ by $0$ outside $[0,T]$ (still denoted $g$), so that all integrals below may be taken over $\mathbb{R}$.
For $u\in\mathbb{R}$ and $\varepsilon>0$ we rewrite $F_\varepsilon$ by the change of variables $s=u-t$:
\[
F_\varepsilon(u)=\frac{1}{\varepsilon}\int_0^{\varepsilon} g(u-s)\,(\varepsilon-s)^{\alpha}\,ds
= (g * \kappa_\varepsilon)(u),
\]
where the kernel
\[
\kappa_\varepsilon(s)=\frac{1}{\varepsilon}(\varepsilon-s)^{\alpha}\mathbf 1_{[0,\varepsilon]}(s)
\]
is supported on $[0,\varepsilon]$. For any $1\le r<\infty$,
\[
\|\kappa_\varepsilon\|_{L^r(\mathbb{R})}^r
=\int_0^\varepsilon\Big(\tfrac{1}{\varepsilon}(\varepsilon-s)^\alpha\Big)^r ds
=\varepsilon^{-r}\int_0^\varepsilon (\varepsilon-s)^{\alpha r}ds
=\frac{\varepsilon^{\alpha r+1-r}}{\alpha r+1}.
\]
Hence
\begin{equation}\label{eq:kernel-norm}
\|\kappa_\varepsilon\|_{L^r(\mathbb{R})}
= C_{\alpha,r}\,\varepsilon^{\,\alpha-1+\frac{1}{r}},
\qquad
C_{\alpha,r}=(\alpha r+1)^{-1/r}.
\end{equation}
Fix $p$ as in the statement and choose $r\in[1,\infty]$ so that
\begin{equation}\label{eq:indices}
1+\frac{1}{2}=\frac{1}{p}+\frac{1}{r}
\quad\Longleftrightarrow\quad
\frac{1}{r}=\frac{3}{2}-\frac{1}{p}.
\end{equation}
By Young's convolution inequality on $\mathbb{R}$,
\[
\|F_\varepsilon\|_{L^2(\mathbb{R})}
=\|g*\kappa_\varepsilon\|_{L^2(\mathbb{R})}
\le \|g\|_{L^p(\mathbb{R})}\,\|\kappa_\varepsilon\|_{L^r(\mathbb{R})}.
\]
Using \eqref{eq:kernel-norm} and \eqref{eq:indices} we obtain
\[
\|F_\varepsilon\|_{L^2(\mathbb{R})}
\le C_{\alpha,r}\,\|g\|_{L^p(0,T)}\,
\varepsilon^{\,\alpha-1+\frac{1}{r}}
= C_{\alpha,p}\,\|g\|_{L^p(0,T)}\,
\varepsilon^{\,\alpha-1+\frac{3}{2}-\frac{1}{p}}.
\]
The exponent of $\varepsilon$ is positive exactly when
\[
\alpha - 1 + \frac{3}{2} - \frac{1}{p} > 0
\quad\Longleftrightarrow\quad
\frac{1}{p} < \alpha + \frac{1}{2}
\quad\Longleftrightarrow\quad
p>\frac{2}{1+2\alpha}.
\]
Under this hypothesis we conclude
\[
\|F_\varepsilon\|_{L^2(\mathbb{R})}\xrightarrow[\varepsilon\downarrow 0]{}0.
\]
Since $F_\varepsilon$ is supported in $[0,T]$ when $g$ is, the same limit holds in $L^2(0,T)$, i.e.
\[
\int_0^T |F_\varepsilon(u)|^2\,du\;\longrightarrow\;0.
\]
This completes the proof.
\end{proof}

\begin{remark}
The threshold $p_c=2/(1+2\alpha)$ is the critical value for the Young-type argument in Lemma \ref{remlemma}.
For $p\ge 2$ one can also use Young's inequality with $r=1$ and the fact that
$\|\kappa_\varepsilon\|_{L^1}=\varepsilon^{\alpha}/(\alpha+1)\to 0$,
followed by the embedding $L^p[0,T]\hookrightarrow L^2[0,T]$ on a finite measure space.
\end{remark}
Now, we make a fundamental use of Lemma \ref{remlemma}.  
\begin{proposition}\label{wiener}
If $g \in L^p[0,T]$ for $p > \frac{2}{1+2(H-\frac{1}{2})} = \frac{1}{H}$, then $g \in |\mathcal{H}|$ and 

$$\int_0^T g_td^-B_t = \int_0^T \Bigg( \int_r^T g_t\frac{\partial K}{\partial t}(t,r)dt \Bigg)dW_r.$$

\end{proposition}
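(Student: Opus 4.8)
The plan is to start from the regularization definition and plug in the Volterra decomposition (\ref{fdec}) of the increments, so that
\[
\frac{1}{\epsilon}\int_0^T g_t\,[B_{t+\epsilon}-B_t]\,dt
= \int_0^T g_t\Big(\int_0^t \tfrac{1}{\epsilon}\Delta_\epsilon K(t,u)\,dW_u\Big)dt
+ \int_0^T g_t\Big(\tfrac{1}{\epsilon}\int_t^{t+\epsilon}K(t+\epsilon,u)\,dW_u\Big)dt.
\]
Both terms are (up to a stochastic Fubini argument) Wiener integrals, and the whole strategy is to show the first term converges in $L^2(\Omega)$ to $\int_0^T\big(\int_r^T g_t\,\tfrac{\partial K}{\partial t}(t,r)\,dt\big)dW_r$ while the second is negligible. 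First I would apply the stochastic Fubini theorem to each term, writing the first as $\int_0^T\big(\int_u^T g_t\,\tfrac{1}{\epsilon}\Delta_\epsilon K(t,u)\,dt\big)dW_u$ and the second as $\int_0^T\big(\tfrac{1}{\epsilon}\int_{(u-\epsilon)^+}^{u\wedge T}g_t\,K(t+\epsilon,u)\,dt\big)dW_u$; this requires checking the relevant $L^2([0,T]\times\Omega)$-integrability of the integrands, which follows from the bounds on $K$ and $g\in L^p$.

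Next, by the It\^o isometry it suffices to control the $L^2(0,T)$-norms of the deterministic kernels. For the main term, the inner kernel is $\frac{1}{\epsilon}\int_u^T g_t\big(K(t+\epsilon,u)-K(t,u)\big)\mathds{1}_{(0,t)}(u)\,dt$; since $K(t,u)=\sqrt{2H}(t-u)^{H-1/2}$ is $C^1$ in $t$ for $t>u$ with $\tfrac{\partial K}{\partial t}(t,u)=\sqrt{2H}(H-\tfrac12)(t-u)^{H-3/2}$, I would write $\tfrac{1}{\epsilon}(K(t+\epsilon,u)-K(t,u)) = \tfrac{1}{\epsilon}\int_0^\epsilon \tfrac{\partial K}{\partial t}(t+\lambda,u)\,d\lambda$ and compare with the target $\int_r^T g_t\tfrac{\partial K}{\partial t}(t,r)\,dt$. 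The difference splits into a piece where $t$ ranges over $(u,u+\epsilon)$ (a boundary layer, because for $t<u+\epsilon$ we may have $t+\lambda>u$ but the integrand is only singular) and a piece where $t>u+\epsilon$ on which $\tfrac{\partial K}{\partial t}$ is bounded and one can pass to the limit by dominated convergence after an $L^2$ estimate. The boundary-layer piece is exactly where Lemma \ref{remlemma} enters: with $\alpha=H-\tfrac12\in(0,\tfrac12)$ the quantity $\tfrac{1}{\epsilon}\int_{u-\epsilon}^{u}|g(t)|\,|t+\epsilon-u|^{\alpha-1}\cdots$ type terms — more precisely after the change of variables one recognizes $F_\epsilon$-type convolutions against kernels of order $\epsilon^{\alpha-1+1/r}$ — and the hypothesis $p>1/H=2/(1+2\alpha)$ is precisely the threshold in Lemma \ref{remlemma} making these vanish in $L^2(0,T)$.

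For the second (diagonal) term, the kernel $\tfrac{1}{\epsilon}\int_{(u-\epsilon)^+}^{u\wedge T}g_t\,K(t+\epsilon,u)\,dt$ involves $K(t+\epsilon,u)=\sqrt{2H}(t+\epsilon-u)^{H-1/2}$ on the range $u-\epsilon<t<u$, hence $0<t+\epsilon-u<\epsilon$, so $|K(t+\epsilon,u)|\le \sqrt{2H}\,\epsilon^{H-1/2}$... no, rather $|t+\epsilon-u|^{H-1/2}$ which for $H>\tfrac12$ is at most a constant times $\epsilon^{H-1/2}$ on that range; more efficiently this is again an $F_\epsilon$-type object with $\alpha=H-\tfrac12$ and Lemma \ref{remlemma} directly gives $L^2(0,T)\to 0$. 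Combining via It\^o isometry then yields the $L^2(\Omega)$-convergence, so the limit exists and equals the claimed Wiener integral, which simultaneously certifies $g\in|\mathcal H|$ because the target kernel $r\mapsto\int_r^T g_t\tfrac{\partial K}{\partial t}(t,r)\,dt$ is in $L^2(0,T)$ (this being the content of $\int_0^T(\int_r^T|g_t|\tfrac{\partial K}{\partial t}(t,r)dt)^2dr<\infty$, which one checks by the same Young-type estimate since $\tfrac{\partial K}{\partial t}(t,r)\simeq(t-r)^{H-3/2}$ and $H-3/2\in(-1,-\tfrac12)$). The main obstacle I anticipate is the boundary-layer analysis of the first term: near $t=u$ the derivative $\tfrac{\partial K}{\partial t}(t,u)$ blows up like $(t-u)^{H-3/2}$ which is not integrable if naively squared, so one must carefully localize the singular region to width $\epsilon$ and invoke Lemma \ref{remlemma} with the sharp exponent rather than crude bounds; getting the bookkeeping of the change of variables to match the hypotheses of that lemma exactly is the delicate point.
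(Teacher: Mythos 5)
Your overall skeleton coincides with the paper's: split the increment via (\ref{fdec}), apply the stochastic Fubini theorem, reduce to $L^2(0,T)$-convergence of deterministic kernels by the It\^o isometry, dispose of the diagonal term $I_2(\epsilon)$ by Lemma \ref{remlemma} with $\alpha=H-\frac12$, and certify $g\in|\mathcal{H}|$ through the embedding $L^p[0,T]\hookrightarrow|\mathcal H|$ for $p>1/H$. The one place you diverge is the main term, and there your route is both heavier and, as written, not quite covered by the lemma you cite. The paper needs no boundary-layer decomposition: since $x\mapsto x^{H-\frac12}$ is concave, the mean value theorem gives the uniform bound $0<\frac{1}{\epsilon}\Delta_\epsilon K(t,u)\le \frac{\partial K}{\partial t}(t,u)$ for \emph{all} $u<t$ and all $\epsilon>0$, so $|g_t|\frac{\partial K}{\partial t}(t,\cdot)$ is a single $\epsilon$-independent majorant; dominated convergence (first in $t$ for a.e.\ $r$, then in $r$ using $\int_0^T\int_0^T|g_tg_s|\frac{\partial^2R}{\partial t\partial s}(t,s)\,ds\,dt<\infty$, i.e.\ $g\in|\mathcal H|$) settles the whole term in one stroke, the region $t\in(u,u+\epsilon)$ included. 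Your plan instead isolates that region and invokes Lemma \ref{remlemma} there; but the lemma is stated for the \emph{bounded} kernel $\lvert t+\epsilon-u\rvert^{\alpha}$ with $\alpha=H-\frac12>0$, whereas your boundary-layer terms carry the singular exponent $\alpha-1=H-\frac32$ coming from $\frac{\partial K}{\partial t}$. The same Young-convolution computation does survive for the singular kernel $s^{H-3/2}\mathbf 1_{[0,\epsilon]}(s)$: its $L^r$-norm is $C\,\epsilon^{H-\frac32+\frac1r}$, and with $\frac1r=\frac32-\frac1p$ the exponent equals $H-\frac1p$, which is positive precisely when $p>1/H$. So your argument can be completed, but you must state and prove that variant rather than cite Lemma \ref{remlemma} as is; the concavity/MVT domination is the cleaner way to close this step.
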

\begin{proof}
Fix $p > \frac{1}{H}$ and let $g \in L^p[0,T]$. The RLFBM has a similar covariance structure to fractional Brownian motion. Indeed, since the incomplete beta function in (\ref{secR}) is bounded on $[0,1]$, then a similar estimate as described in Lemma 5.1.1 in \cite{nualart2006} holds true. In this case, 

$$\| g\|_{|\mathcal{H}|}\lesssim_H \|g\|_{L^{\frac{1}{H}}[0,T]} < \infty.$$
Observe that we have pointwise convergence 

$$\lim_{\epsilon\downarrow 0}g_t \frac{1}{\epsilon}\Delta_\epsilon K(t,r)=g_t  \frac{\partial K}{\partial t}(t,r)
$$
and mean value theorem yields 

\begin{equation}\label{b1}
\big|g_t  \frac{1}{\epsilon}\Delta_\epsilon K(t,r)\big|\le \big|g_t \big|\frac{\partial K}{\partial t}(t,r)
\end{equation}
for each $0 < r < t < T$. We will apply bounded convergence theorem in two steps. By assumption, for Lebesgue almost all $r$, 

$$\int_r^T |g_t| \frac{\partial K}{\partial t}(t,r) dt < \infty.$$
Hence, from (\ref{b1}) we may apply bounded convergence theorem to get

\begin{equation}\label{b2}
\lim_{\epsilon\downarrow 0}\Bigg( \int_r^T g_t \Big\{\frac{1}{\epsilon}\Delta_\epsilon K(t,r) - \frac{\partial K}{\partial t}(t,r)\Big\}dt\Bigg)^2=0, 
\end{equation}
for Lebesgue almost all $r$. Let us denote $\xi_\epsilon(t,r) = \{\frac{1}{\epsilon}\Delta_\epsilon K(t,r) - \frac{\partial K}{\partial t}(t,r)\}$ for $t > r$. By applying Fubini's theorem and (\ref{b1}), we observe 

$$\Bigg( \int_r^T g_t \Big\{\frac{1}{\epsilon}\Delta_\epsilon K(t,r) - \frac{\partial K}{\partial t}(t,r)\Big\}dt\Bigg)^2$$
$$ = \int_{[r,T]^2} g_{t_1} g_{t_2}\xi_\epsilon(t_1,r)\xi_\epsilon(t_2,r)dt_1dt_2$$
\begin{equation}\label{b3}
\le 4 \int_{[r,T]^2} \big|g_{t_1} g_{t_2}\big| \frac{\partial K}{\partial t_1}(t_1,r) \frac{\partial K}{\partial t_2}(t_2,r)dt_1dt_2,
\end{equation}
for $0\le r < T$. Fubini's theorem and the fact that $g \in L^p[0,T] \subset |\mathcal{H}|$ yield

$$\int_0^T\int_{[r,T]^2} \big|g_{t_1} g_{t_2}\big| \frac{\partial K}{\partial t_1}(t_1,r) \frac{\partial K}{\partial t_2}(t_2,r)dt_1dt_2 dr$$
\begin{equation}\label{b4} 
= \int_0^T\int_0^T \big|g_{t} g_{s}\big| \frac{\partial^2 R}{\partial t \partial s}(t,s)dtds< \infty.
\end{equation}
From (\ref{b2}), (\ref{b3}), (\ref{b4}) and It\^o isometry, we conclude
\begin{equation} \label{eq:WInt}
  \lim_{\epsilon\downarrow 0}\int_0^T \int_{0}^{t} \frac{1}{\epsilon}\Delta_\epsilon K (t,u)dW_u dt = \int_0^T \Bigg(\int^T_rg_t\frac{\partial K}{\partial t}(t,r)dt\Bigg) dW_r.
  \end{equation}
Now, from (\ref{fdec}), it remains to check that 

$$\lim_{\epsilon\downarrow 0}\int_0^T  \frac{1}{\epsilon}\int_{t}^{t+\epsilon} K(t+\epsilon, u) dW_u dt = 0$$
in $L^2(\Omega)$. By stochastic Fubini's theorem, we can write 

$$\int_0^T  \frac{1}{\epsilon}\int_{t}^{t+\epsilon} g_t K(t+\epsilon, u) dW_u dt = \int_0^T \Bigg(\frac{1}{\epsilon}\int_{u-\epsilon}^ug_t K(t+\epsilon,u)dt\Bigg) dW_u.$$
A direct application of Lemma \ref{remlemma} allows us to conclude the proof.   
\end{proof}
\begin{remark}\label{rmk:JFATudor}
  If $g$ is càdlàg  function, the result of Proposition \ref{wiener} was proved in Corollary 5.14 of \cite{kruk2007}.
    This was stated in the case when the finite covariance covariance measure $\vert t_1 - t_2 \vert^{2H-2} dt_1 dt_2$ on $[0,T]^2$
    was replaced by a generic finite signed mesure.
  \end{remark}
  
\begin{remark}
In the present paper, we will impose $L^2[0,T]$-integrability for the mean $t\mapsto \mathbb{E}[Y_t]$ of an adapted process $Y$. 
\end{remark}

In the sequel, let $L^2_a(\Omega\times [0,t])$ be the set of $\mathbb{F}$-progressively measurable process $\{\varphi(t,s); 0\le s\le t\}$ on $\Omega\times [0,t]$ such that 

$$\mathbb{E}\int_{0}^t |\varphi(t,s)|^2ds < \infty.$$

\section{The isometry for square-integrable adapted processes}

\label{S2}

Our methodology will be fully based on the classical It\^o's representation theorem w.r.t. a fixed Brownian motion $W$. If $\{Y_t; 0\le t\le T\}$ is an $\mathbb{F}$-adapted process such that $\mathbb{E}|Y_t|^2 < \infty$ for every $t \in [0,T]$, then for each $t \in [0,T]$, there exists a unique process $\varphi^{(1)}(t,\cdot) \in L^2_a(\Omega\times[0,t])$ which realizes 

\begin{equation}\label{martREP1}
Y_t = \mathbb{E}[Y_t] + \int_{0}^t \varphi^{(1)}_Y(t,s)dW_s,
\end{equation}
for $0\le t\le T$. Throughout this paper, we will assume the following standing assumption for all considered square-integrable $\mathbb{F}$-adapted processes.

\

% \noindent \textbf{Assumption I0:}
\begin{assumption}\label{I0}
The martingale derivative 
$$(s,t,\omega)\mapsto \varphi^{(1)}_Y(t,s)(\omega)$$ 
is jointly measurable along the simplex $\{(s,t); 0 < s < t\le T\}$. For almost all (Lebesgue) $s < t$, we shall represent  

$$\varphi^{(1)}_Y(t, s) = \mathbb{E}[\varphi^{(1)}_Y(t,s)] + \int_0^s \varphi_Y^{(2)}(t,s; r)dW_r,$$
where we will suppose that $(r,s,t,\omega)\mapsto \varphi^{(2)}_Y(t,s;r)(\omega)$ is jointly measurable along the simplex $\{(r,s,t); r < s < t\le T \}$. 
\end{assumption}

%$ \varphi_Y^{(2)}(t,u; \cdot) \in L^2_a (\Omega\times [0,u])$ for $0 < u < t$. 

%Throughout this paper, we will assume that $\varphi^{(1)}_Y(t,s)=0$ whenever $s>t$. By Lemma 2.A of \cite{berger}, we know there exists a version of 
%$$t\mapsto \int_0^t \varphi^{(1)}_Y(t,u)dW_u,$$
%which is $\mathbb{F}$-adapted. 

%Observe $\varphi^{(1)}_Y(t, \cdot) \in L^2_a(\Omega\times [0,t])$ for every $t \in (0,T]$. Under
%%  Is it clear?
%very mild path-regularity condition $u \in [0,t]\mapsto \varphi^{(1)}_Y(t, u)$, for a given $t \in (0,T]$, $\varphi^{(1)}_Y(t,\cdot)$ admits a pointwise-defined representative $\varphi^{(1)}_Y(t,u)$ such that $\mathbb{E}|\varphi^{(1)}_Y(t,u)|^2 < \infty$ for every $0 < u \le t\le T$ and it admits a further martingale representation    

%\begin{remark}
%A simple and practical assumption to insure that $\varphi^{(1)}_Y(t,\cdot)$ admits a pointwise-defined representative is cadlag paths $u \in [0,t]\mapsto %\varphi^{(1)}_Y(t, u)$, for every $t \in (0,T]$. 
%\end{remark}
%%%  It does not seem completely logical, since the bounds of the second derivative
%%depend on $\varphi^1$ and so it should not be necessary to suppose càdlàg or similar.
%The problem is to have a bimeasurable version in $\Omega \times triangle.

The strategy is to exploit as much as possible the It\^o's representation (\ref{martREP1}) for a given square-integrable process $Y$ adapted w.r.t. filtration generated by the RLFBM which coincides with the filtration $\mathbb{F}$. Our philosophy is to view those processes as $\mathbb{F}$-adapted, i.e., they will be functionals w.r.t. a fixed standard Brownian motion $W$. For a given $\epsilon>0$, we split the increments of the RLFBM as described in (\ref{fdec}) and we write 

\begin{eqnarray}
\label{impdec} \int_0^T Y_t \frac{1}{\epsilon}[B_{t+\epsilon} - B_t]dt &=& \int_0^T Y_t \int_{0}^{t} \frac{1}{\epsilon}\Delta_\epsilon K(t,v)dW_ v dt\\
\nonumber&+& \int_0^T Y_t \frac{1}{\epsilon}\int_t^{t+\epsilon} K(t+\epsilon,u)dW_u dt\\
\nonumber                                                                                                                                                   &=&: I_1(\epsilon) + I_2(\epsilon).
\end{eqnarray}
\begin{remark} \label{rmk:Brownian}
Below we will show that $ I_2(\epsilon)\rightarrow 0$, so $I_1(\epsilon)$ will be another approximation of the forward integral.
We remark that the situation is quite different in the case $H= \frac{1}{2}$, where $I_1(\epsilon) = 0$
and $I_2(\epsilon)=  \int_0^T Y_t \frac{1}{\epsilon}[B_{t+\epsilon}-B_t]dt$ converges to the It\^o integral, see e.g. Theorem 5.1 of \cite{Russo_Vallois_Book}.
\end{remark}

In the sequel, we set $\bar{Y}_t:= Y_t-\mathbb{E}[Y_t]$ for $0\le t\le T$. 

\begin{lemma}\label{Drep1}
If  $\int_0^T\mathbb{E}|Y_t|^2dt < \infty$, then 

$$I_2(\epsilon)\rightarrow 0$$
in $L^{2}(\mathbb{P})$ as $\epsilon \downarrow 0$. 
\end{lemma}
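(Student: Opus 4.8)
The plan is to show $I_2(\varepsilon) \to 0$ in $L^2(\Omega)$ by using the stochastic Fubini theorem to convert $I_2(\varepsilon)$ into a single Wiener integral and then estimating its variance via the It\^o isometry. First I would write
\[
I_2(\varepsilon) = \int_0^T Y_t \,\frac{1}{\varepsilon}\int_t^{t+\varepsilon} K(t+\varepsilon,u)\,dW_u\,dt.
\]
The key observation is that for $u \in (t, t+\varepsilon)$ the integrand $Y_t$ is $\mathcal{F}_t$-measurable, hence $\mathcal{F}_u$-measurable, so the stochastic Fubini theorem (after checking the requisite square-integrability, which follows from $\int_0^T \mathbb{E}|Y_t|^2\,dt < \infty$ together with the bound $K(t+\varepsilon,u) = \sqrt{2H}(t+\varepsilon-u)^{H-1/2} \le \sqrt{2H}\,\varepsilon^{H-1/2}$ on the relevant region) applies and yields
\[
I_2(\varepsilon) = \int_0^T \Bigl(\frac{1}{\varepsilon}\int_{(u-\varepsilon)^+}^{u \wedge T} Y_t\, K(t+\varepsilon,u)\,dt\Bigr) dW_u.
\]

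Next I would apply the It\^o isometry:
\[
\mathbb{E}\bigl|I_2(\varepsilon)\bigr|^2 = \int_0^{T+\varepsilon} \mathbb{E}\Bigl|\frac{1}{\varepsilon}\int_{(u-\varepsilon)^+}^{u\wedge T} Y_t\, K(t+\varepsilon,u)\,dt\Bigr|^2 du.
\]
By Cauchy--Schwarz (or Jensen) in the inner $dt$-integral, which runs over an interval of length at most $\varepsilon$, and using $K(t+\varepsilon,u) \le \sqrt{2H}\,\varepsilon^{H-1/2}$ for $t \in (u-\varepsilon, u)$, I bound
\[
\mathbb{E}\Bigl|\frac{1}{\varepsilon}\int_{(u-\varepsilon)^+}^{u\wedge T} Y_t\, K(t+\varepsilon,u)\,dt\Bigr|^2 \le \frac{1}{\varepsilon}\int_{(u-\varepsilon)^+}^{u\wedge T} \mathbb{E}|Y_t|^2\, K(t+\varepsilon,u)^2\,dt \lesssim_H \varepsilon^{2H-2}\int_{(u-\varepsilon)^+}^{u\wedge T} \mathbb{E}|Y_t|^2\,dt.
\]
Integrating in $u$ and switching the order of integration (ordinary Fubini), each $t$ contributes to a $u$-interval of length at most $\varepsilon$, so
\[
\mathbb{E}\bigl|I_2(\varepsilon)\bigr|^2 \lesssim_H \varepsilon^{2H-2}\int_0^T \mathbb{E}|Y_t|^2 \Bigl(\int_t^{t+\varepsilon} du\Bigr) dt = \varepsilon^{2H-1}\int_0^T \mathbb{E}|Y_t|^2\,dt \xrightarrow[\varepsilon\downarrow 0]{} 0,
\]
since $2H - 1 > 0$ for $H > \tfrac12$.

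Alternatively, one can avoid stochastic Fubini entirely by the following more elementary route, which I would present if the measurability hypotheses for stochastic Fubini were delicate: directly expand $\mathbb{E}|I_2(\varepsilon)|^2$ as a double integral $\int_0^T\int_0^T \mathbb{E}\bigl[Y_tY_s M_t^\varepsilon M_s^\varepsilon\bigr]\,ds\,dt$ where $M_t^\varepsilon := \tfrac1\varepsilon\int_t^{t+\varepsilon}K(t+\varepsilon,u)\,dW_u$; for $s < t$ condition on $\mathcal{F}_t$, use that $Y_s, Y_t, M_s^\varepsilon$ are $\mathcal{F}_t$-measurable (as $s+\varepsilon$ may exceed $t$ one actually needs to be a bit more careful and should instead condition on $\mathcal{F}_{s\vee t}$ after splitting $M_s^\varepsilon$ into its $\mathcal{F}_t$-measurable and independent parts), and bound $\mathbb{E}|M_t^\varepsilon|^2 = \tfrac{1}{\varepsilon^2}\int_t^{t+\varepsilon}2H(t+\varepsilon-u)^{2H-1}\,du = \tfrac{2H}{2H}\varepsilon^{2H-2} = \varepsilon^{2H-2}$, recovering the same rate. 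The main obstacle is the bookkeeping caused by the fact that the stochastic integral defining $M_t^\varepsilon$ reaches into the future past time $t$ (up to $t+\varepsilon$), so the integrand $Y_t$ is not measurable with respect to the $\sigma$-field over which $M_t^\varepsilon$ is built; the cleanest way around this is precisely the stochastic Fubini reindexing in $u$ above, after which adaptedness is restored because $Y_t$ with $t < u$ is $\mathcal{F}_u$-measurable.
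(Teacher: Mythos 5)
Your proof is correct and follows essentially the same route as the paper: apply the stochastic Fubini theorem (justified because $Y_t$ is $\mathcal{F}_u$-measurable for $u>t$), then the It\^o isometry and Jensen/Cauchy--Schwarz on the inner integral over an interval of length at most $\varepsilon$, using $K(t+\varepsilon,u)\le\sqrt{2H}\,\varepsilon^{H-1/2}$ to extract the rate $\varepsilon^{2H-1}\int_0^T\mathbb{E}|Y_t|^2\,dt\to 0$. The paper's proof is the same computation (with the outer Wiener integral indexed so that the inner $dt$-integral runs over $(r-\varepsilon,r)$), so no further comment is needed.
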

\begin{proof}

%By definition,  
%$$\int_0^T Y_t \frac{1}{\epsilon}\int_t^{t+\epsilon} K(t+\epsilon,u)dW_u dt =\int_0^T \mathbb{E}[Y_t]\frac{1}{\epsilon}\int_t^{t+\epsilon} K(t+\epsilon,u)dW_u dt$$
%$$+ \int_0^T\int_0^t \varphi^{(1)}_Y(t,u)dW_u\frac{1}{\epsilon}\int_t^{t+\epsilon} K(t+\epsilon,u)dW_u dt.$$
%Integration by parts yields 
%%%  I do not understand. Why do you introduce $\phi^1$?
%%% It is just a subtraction. Why do you separatr Y_t form its mean?
%$$\int_0^t \varphi^{(1)}_Y(t,u)dW_u\frac{1}{\epsilon}\int_t^{t+\epsilon} K(t+\epsilon,u)dW_u = \int_t^{t+\epsilon} \bar{Y}_t K(t+\epsilon,u)dW_u,$$
%where $r\mapsto \int_{r-\epsilon}^r \bar{Y}_tK(t+\epsilon,r)dt$ is $\mathbb{F}$-adapted and 
%$$
%\mathbb{E}\int_0^T \Bigg| \int_{r-\epsilon}^r \bar{Y}_tK(t+\epsilon,r)dt \Bigg|^2dr \lesssim_{H,T} \int_{0}^T\mathbb{E}|\bar{Y}_t|dt < \infty.
%$$
%%  Do you do this for applying Fubini's? In this case we need to put the absolute value of $\bar Y_t$.
%\end{comment}

By applying Fubini's stochastic theorem, we get

$$I_2(\epsilon) = \int_0^T \frac{1}{\epsilon}\int_t^{t+\epsilon} Y_t K(t+\epsilon,u)dW_u dt = \int_0^T \frac{1}{\epsilon}\int_{r-\epsilon}^r Y_t K(t+\epsilon,r)dtdW_r.$$
By applying Jensen's inequality, we get 

\begin{eqnarray*}
\mathbb{E}\int_0^T\Bigg|\frac{1}{\epsilon}\int_{r-\epsilon}^r Y_tK(t+\epsilon,r)dt\Bigg|^2dr&\le&  \int_0^T \frac{1}{\epsilon}\int_{r-\epsilon}^r \mathbb{E}| Y_t|^2 (t+\epsilon -r)^{2H-1}dtdr\\
&\le& \epsilon^{2H-1} \int_0^T  \frac{1}{\epsilon}\int_{r-\epsilon}^r \mathbb{E}| Y_t|^2dtdr\\
% &\le& \sup_{\eta \in (0,1)}\int_0^T  \frac{1}{\eta}\int_{r-\eta}^r \mathbb{E}| Y_t|^2dtdr  \epsilon^{2H-1}\\
&\le& \int_0^T\mathbb{E}| Y_t|^2dt \epsilon^{2H-1}\rightarrow 0,
\end{eqnarray*}
as $\epsilon \downarrow 0$.
%Similar argument yields 

%$$\int_0^T\frac{1}{\epsilon}\mathbb{E}[Y_t]\int_t^{t+\epsilon} K(t+\epsilon,u)dW_u dt\rightarrow 0,$$
%as $\epsilon\downarrow 0$. This concludes the proof.
\end{proof}
%%% FRANCESCO. END OF VERIFICATIONS
At this point, we can concentrate on the convergence of $I_1(\epsilon)$.
Let us denote 
$$
\mathcal{D}^\epsilon_{s,t}B:=\frac{1}{\epsilon}\mathbb{E}^s[B_{t+\epsilon}-B_t],
$$
for $s < t$ and $\epsilon>0$. By using representation (\ref{volterra}), we
observe that

$$\mathcal{D}^\epsilon_{s,t}B = \frac{1}{\epsilon}\int_0^s\Delta_\epsilon K(t,u)dW_u,$$
for $s < t$.

By using integration by parts in (\ref{impdec}) and stochastic Fubini's theorem, we can express
%%%%% FRANCESCO. SPECIFY THAT IN THE BROWNIAN MOTION CASE THE FIRST IS COUPLE WITH LAST
%%% COMPARE WITH THE BROWNIAN MOTION IN GENERAL
$$ I_1(\epsilon) = I_{11}(\epsilon) + I_{12}(\epsilon),$$
where
\begin{eqnarray*}
  I_{11}(\epsilon) &=&  \int_0^T \int_0^t \E[Y_t] \frac{1}{\epsilon}\Delta_\epsilon K(t,u) dW_udt \\
  I_{12}(\epsilon) &=&  \int_0^T \int_0^t {\bar Y}_t  \frac{1}{\epsilon}\Delta_\epsilon K(t,u) dW_udt.
                         \end{eqnarray*}
%\begin{comment}
%                         $I_{11}(\epsilon)$ converges to
%                          $$ \int_0^T \Bigg(\int^T_r\E(Y_t)\frac{\
%                           partial K}{\partial t}(t,r)dt\Bigg) dW_r,$$
%                         taking into account  \eqref{eq:WInt}.
%\end{comment}

Concerning $I_{12}(\epsilon)$  we have                     
       \begin{eqnarray}
  \nonumber
  % \int_0^T B_{t,t+\epsilon} Y_tdt
  I_{12}(\epsilon)   &=&
    %\int_0^T \mathbb{E}[Y_t] B_{t,t+\epsilon}dt \\
 \int_0^T \int_0^t \varphi^{(1)}_Y(t,u)dW_u \int_0^t \frac{1}{\epsilon}\Delta_\epsilon K(t,u) dW_udt \\
  \nonumber &=&
  % \int_0^T \mathbb{E}[Y_t]B_{t,t+\epsilon}dt  +
                \int_0^T \int_0^t\frac{1}{\epsilon}\Delta_\epsilon K(t,u)\varphi^{(1)}_Y(t,u)dudt\\
\nonumber&+& \int_0^T \int_r^T \Bigg(\int_0^r \frac{1}{\epsilon}\Delta_\epsilon K(t,u) dW_u\Bigg) \varphi^{(1)}_Y(t,r)dtdW_r\\
 \label{pq1}
                     &+& \int_0^T \int_r^T \mathbb{E}^r[\bar{Y}_t] \frac{1}{\epsilon}\Delta_\epsilon K(t,r) dt dW_r.
                 %+ \bar{I}_2(\epsilon),
\end{eqnarray}
% where $\bar{I}_2(\epsilon)=  \int_0^T \bar{Y}_t \frac{1}{\epsilon}\int_t^{t+\epsilon} K(t+\epsilon,u)dW_u dt $.
Consequently

\begin{eqnarray*}
I_{1}(\epsilon)  &=&
 \int_0^T \int_0^t \varphi^{(1)}_Y(t,u)dW_u \int_0^t \frac{1}{\epsilon}\Delta_\epsilon K(t,u) dW_udt \\
   &=&
                \int_0^T \int_0^t\frac{1}{\epsilon}\Delta_\epsilon K(t,u)\varphi^{(1)}_Y(t,u)dudt\\
&+& \int_0^T \int_r^T \Bigg(\int_0^r \frac{1}{\epsilon}\Delta_\epsilon K(t,u) dW_u\Bigg) \varphi^{(1)}_Y(t,r)dtdW_r\\
                     &+& \int_0^T \int_r^T \mathbb{E}^r[Y_t] \frac{1}{\epsilon}\Delta_\epsilon K(t,r) dt dW_r.
\end{eqnarray*}
Now, using the martingale representation for $\varphi^{(1)}_Y$, we can express 

\begin{eqnarray}
\label{pq2}\int_0^T \int_0^t\frac{1}{\epsilon}\Delta_\epsilon K(t,u)\varphi^{(1)}_Y(t,u)dudt &=& \int_0^T \int_0^t \mathbb{E}[\varphi^{(1)}_Y(t,u)]\frac{1}{\epsilon}\Delta_\epsilon K(t,u)dudt\\
\nonumber&+& \int_0^T \int_0^t \int_0^u \varphi^{(2)}_Y(t,u; v)dW_v \frac{1}{\epsilon}\Delta_\epsilon K(t,u)dudt.
\end{eqnarray}

By using stochastic Fubini's theorem again in (\ref{pq2}) and combining decompositions (\ref{pq1}) and (\ref{pq2}), we get

\begin{eqnarray}
  \nonumber
  % \int_0^T B_{t,t+\epsilon} Y_tdt
  I_1(\epsilon) &=&\int_0^T \int_0^t \mathbb{E}[\varphi^{(1)}_Y(t,u)]\frac{1}{\epsilon}\Delta_\epsilon K(t,u)dudt\\
\nonumber& + &  \int_0^T \Bigg\{ \int_r^T \varphi^{(1)}_Y(t,r)\mathcal{D} ^\epsilon_{r,t}B dt + \int_r^T \int_r^t \varphi^{(2)}_Y(t,v;r)\frac{1}{\epsilon}\Delta_\epsilon K(t,v)dvdt\\
  \label{Kepsilon}&+& \int_r^T \mathbb{E}^r [Y_t] \frac{1}{\epsilon}\Delta_\epsilon K(t,r)dt\Bigg\}dW_r.
                      %+ o_{L^2}(\epsilon),
\end{eqnarray}
%where 
%$$o_{L^2}(\epsilon)=\bar{I}_2(\epsilon) + \int_0^T \mathbb{E}[Y_t]B_{t,t+\epsilon}dt - \int_0^T \int_r^T  \mathbb{E}[Y_t] \frac{1}{\epsilon}\Delta_\epsilon K(t,r)dtdW_r. $$
\begin{remark} \label{rmk:Det}
  If  $Y$ is deterministic then $\varphi^{(1)}_Y  =  \varphi^{(2)}_Y = 0$ and $Y_t = \E^r[Y_t]$, so that
  the integral by  regularization will coincide with the Wiener integral  
  
$$\int_0^T \Bigg( \int_r^T Y_t \frac{\partial K}{\partial t}(t,r)dt \Bigg)dW_r,$$
by Proposition \ref{wiener}. 
\end{remark}

In the sequel, we will make a fundamental use of the so-called Nelson's derivative

\begin{equation}\label{nelson}
\mathcal{D}_{r,t}B:=\lim_{\epsilon\downarrow 0} \mathcal{D}^\epsilon_{r,t}B~\text{in}~L^2(\Omega),
\end{equation}
for $0 < r < t$. 

\begin{remark}
We can represent the Nelson's derivative by

$$\mathcal{D}_{r,t}B = \int_0^r \frac{\partial K}{\partial t}(t,u)dW_u,$$
for $0 < r < t$. It is important to stress that $\mathcal{D}_{t,t}B$ does not exists along the diagonal! For more details and further discussion, see e.g. \cite{darses}.  
\end{remark}

Let us define 

\begin{eqnarray*}
\mathbf{D}_{x_1,x_2;r}B&:=& \int_0^r \mathcal{D}_{s,x_1}B d\mathcal{D}_{s,x_2}B\\
&=&\int_0^r \mathcal{D}_{s,x_1}B \frac{\partial K}{\partial x_2} (x_2,s)dW_s
\end{eqnarray*}
and 
$$\mathbf{D}^\epsilon_{x_1,x_2;r}B:=\int_0^r \big(\mathcal{D}^\epsilon_{s,x_1}B\big) \frac{1}{\epsilon}\Delta_\epsilon K(x_2,s)dW_s$$
for $r < x_1\wedge x_2$, $x_1\neq x_2$ and $\epsilon \in(0,1)$.

\begin{lemma}\label{Depsilonbounds}

$$\sup_{\epsilon \in (0,1)}\big\|\mathbf{D}^\epsilon_{x_1,x_2;r}B\big\|^q_{L^q(\Omega)}\lesssim_{q,H} \int_0^r (x_1-s)^{q(H-1)}(x_2-s)^{q(H-\frac{3}{2})}ds,$$
for $q>2$ and 

$$\sup_{\epsilon \in (0,1)}\big\|\mathbf{D}^\epsilon_{x_1,x_2;r}B\big\|^q_{L^q(\Omega)}\lesssim_{q,H} \Bigg(\int_0^r (x_1-s)^{2(H-1)}(x_2-s)^{2(H-\frac{3}{2})}ds\Bigg)^{\frac{q}{2}},$$
for $1\le q \le 2$ and $r < x_1\wedge x_2$ with $x_1\neq x_2$.  
\end{lemma}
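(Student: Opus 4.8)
The plan is to observe that $r\mapsto \mathbf{D}^\epsilon_{x_1,x_2;r}B$ is a continuous $L^2$-martingale — namely the It\^o integral of the $\mathbb{F}$-progressively measurable integrand $s\mapsto \mathcal{D}^\epsilon_{s,x_1}B\cdot\tfrac1\epsilon\Delta_\epsilon K(x_2,s)$ — and to run a Burkholder--Davis--Gundy argument, controlling its quadratic variation
$$\big\langle \mathbf{D}^\epsilon_{x_1,x_2;\cdot}B\big\rangle_r=\int_0^r \big|\mathcal{D}^\epsilon_{s,x_1}B\big|^2\,\tfrac1{\epsilon^2}\big|\Delta_\epsilon K(x_2,s)\big|^2\,ds$$
by deterministic bounds on the difference quotient of $K$ combined with Gaussian moment estimates for $\mathcal{D}^\epsilon_{s,x_1}B$.

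The key deterministic input is a pointwise bound \emph{uniform in} $\epsilon\in(0,1)$. Since $\tfrac{\partial K}{\partial t}(t,u)=\sqrt{2H}\,(H-\tfrac12)(t-u)^{H-\frac32}$ and $t\mapsto (t-u)^{H-\frac32}$ is nonincreasing (because $H<\tfrac32$), the mean value theorem gives, for $0<u<x$,
$$\Big|\tfrac1\epsilon\Delta_\epsilon K(x,u)\Big|=\Big|\tfrac{\partial K}{\partial t}(\theta,u)\Big|\le \sqrt{2H}\,(H-\tfrac12)\,(x-u)^{H-\frac32},\qquad \theta\in(x,x+\epsilon).$$
Hence $\mathcal{D}^\epsilon_{s,x_1}B=\tfrac1\epsilon\int_0^s\Delta_\epsilon K(x_1,u)\,dW_u$ is centred Gaussian with variance $\tfrac1{\epsilon^2}\int_0^s|\Delta_\epsilon K(x_1,u)|^2du\lesssim_H\int_0^s(x_1-u)^{2H-3}du\lesssim_H (x_1-s)^{2H-2}$, the last step using $2H-2<0$. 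Using the Gaussian moment formula, for every $q\ge1$ and uniformly in $\epsilon\in(0,1)$,
$$\big\|\mathcal{D}^\epsilon_{s,x_1}B\big\|_{L^q(\Omega)}^q\lesssim_{q,H}(x_1-s)^{q(H-1)},\qquad \tfrac1{\epsilon^q}\big|\Delta_\epsilon K(x_2,s)\big|^q\lesssim_H (x_2-s)^{q(H-\frac32)}.$$

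For $q>2$, apply the BDG inequality, then H\"older's inequality in the $ds$-variable with exponents $q/2$ and $q/(q-2)$ (admissible since $q/2>1$), then Tonelli:
$$\big\|\mathbf{D}^\epsilon_{x_1,x_2;r}B\big\|_{L^q(\Omega)}^q\lesssim_q\E\Big[\Big(\int_0^r\phi_\epsilon(s)\,ds\Big)^{q/2}\Big]\le r^{\frac q2-1}\int_0^r\E\big[\phi_\epsilon(s)^{q/2}\big]\,ds,$$
with $\phi_\epsilon(s)=|\mathcal{D}^\epsilon_{s,x_1}B|^2\,\tfrac1{\epsilon^2}|\Delta_\epsilon K(x_2,s)|^2$; as the kernel factor is deterministic, $\E[\phi_\epsilon(s)^{q/2}]=\|\mathcal{D}^\epsilon_{s,x_1}B\|_{L^q}^q\cdot\tfrac1{\epsilon^q}|\Delta_\epsilon K(x_2,s)|^q\lesssim_{q,H}(x_1-s)^{q(H-1)}(x_2-s)^{q(H-\frac32)}$, and since $r\le T$ and $\tfrac q2-1\ge0$ the prefactor $r^{q/2-1}\le T^{q/2-1}$ is absorbed into the constant; this gives the first bound. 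For $1\le q\le2$ the BDG inequality is not even needed: as $\mathbb{P}$ is a probability measure, $\|\mathbf{D}^\epsilon_{x_1,x_2;r}B\|_{L^q}\le\|\mathbf{D}^\epsilon_{x_1,x_2;r}B\|_{L^2}$, and It\^o's isometry together with the variance bound above yields $\|\mathbf{D}^\epsilon_{x_1,x_2;r}B\|_{L^2(\Omega)}^2=\int_0^r\E|\mathcal{D}^\epsilon_{s,x_1}B|^2\,\tfrac1{\epsilon^2}|\Delta_\epsilon K(x_2,s)|^2\,ds\lesssim_H\int_0^r(x_1-s)^{2(H-1)}(x_2-s)^{2(H-\frac32)}\,ds$; raising to the power $q/2$ finishes the proof.

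The remaining work is routine: every Tonelli/Fubini interchange is legitimate since the integrands are nonnegative and, on the relevant range $0<s<r<x_1\wedge x_2$, locally bounded (the factors $(x_1-s)$ and $(x_2-s)$ stay bounded below). The only genuine points of care are the uniformity in $\epsilon$ of the kernel estimate — which is precisely why one exploits the monotonicity of $t\mapsto (t-u)^{H-\frac32}$ and not merely its differentiability — and, in the case $q>2$, producing exactly the stated right-hand side (a Lebesgue integral of $(x_1-s)^{q(H-1)}(x_2-s)^{q(H-\frac32)}$, rather than a $q/2$-th power of $\int_0^r(x_1-s)^{2(H-1)}(x_2-s)^{2(H-\frac32)}\,ds$), which forces the H\"older-in-$s$ step and the harmless dependence of the constant on the fixed horizon $T$.
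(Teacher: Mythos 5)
Your proof is correct and follows essentially the same route as the paper's: Burkholder--Davis--Gundy (or, for $q\le 2$, the $L^q\hookrightarrow L^2$ embedding plus It\^o's isometry) combined with the uniform-in-$\epsilon$ mean-value bound $\tfrac1\epsilon|\Delta_\epsilon K(x,u)|\lesssim_H (x-u)^{H-\frac32}$ and the estimate $\mathbb{E}|\mathcal{D}^\epsilon_{s,x_1}B|^q\lesssim (x_1-s)^{q(H-1)}$. The only cosmetic differences are that you invoke the Gaussian moment formula where the paper uses BDG for $\mathcal{D}^\epsilon_{s,x_1}B$, and that you track the $r^{q/2-1}\le T^{q/2-1}$ prefactor from the H\"older-in-$s$ step explicitly, where the paper absorbs it into a Jensen-type inequality with a $T$-dependent constant.
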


\begin{remark}\label{supq2}
Observe that 
$$\mathbf{D}^\epsilon_{x_1,x_2;r}B=\int_0^r \mathcal{D}^\epsilon_{s,x_1}Bd \mathcal{D}^\epsilon_{s,x_2}B$$
for $r < x_1\wedge x_2 \le T$. Moreover, in case $q=2$, we have 

\begin{eqnarray*}
\sup_{\epsilon \in (0,1)}\big\|\mathbf{D}^\epsilon_{x_1,x_2;r}B\big\|^2_{L^2(\Omega)}&\le& \big\|\mathbf{D}_{x_1,x_2;r}B\big\|^2_{L^2(\Omega)}\\
&=& \int_0^r (x_1-s)^{2(H-1)}(x_2-s)^{2(H-\frac{3}{2})}ds,
\end{eqnarray*}
for $r < x_1\wedge x_2 \le T$ with $x_1\neq x_2$.  
\end{remark}

\begin{proof}
Fix $r < x_1 \wedge x_2$ with $x_1\neq x_2$. For $q\ge 2$, a direct application of Burkholder-Davis-Gundy and Jensen inequalities yield
\begin{eqnarray}
\nonumber\mathbb{E}|\mathbf{D}^\epsilon_{x_1,x_2;r}B|^q&\lesssim_q& \mathbb{E}\Bigg( \int_0^r \Big| \mathcal{D}^\epsilon_{s,x_1}B \frac{1 }{\epsilon} \Delta_\epsilon K(x_2,s)\Big|^2ds\Bigg)^{\frac{q}{2}}\\
\label{est1}&\le& \mathbb{E}\int_0^r \Big| \mathcal{D}^\epsilon_{s,x_1}B \frac{1 }{\epsilon} \Delta_\epsilon K(x_2,s) \Big|^qds.
\end{eqnarray}
Again a direct application of Burkholder-Davis-Gundy inequality and Mean Value Theorem yield

\begin{equation}\label{est2}
\mathbb{E}|\mathcal{D}^\epsilon_{s,x_1}B|^q\lesssim_q\Bigg(\int_0^s(x_1-u)^{2H-3}du \Bigg)^{\frac{q}{2}},
\end{equation}
and 

\begin{equation}\label{est3}
\frac{1 }{\epsilon} \Delta_\epsilon K(x_2,s)\lesssim_{H,T} |\frac{\partial K}{\partial t}(x_2,s)|,
\end{equation}
for every $s < r < x_1\wedge x_2$. Then summing up (\ref{est1}), (\ref{est2}) and (\ref{est3}), we have 

\begin{eqnarray}\label{est4}
\mathbb{E}|\mathbf{D}^\epsilon_{x_1,x_2;r}B|^q&\lesssim_{q,H,T}&\int_0^r \Bigg( \int_0^s (x_1-u)^{2H-3}du \Bigg)^{\frac{q}{2}}(x_2-s)^{q(H-\frac{3}{2})}ds\\
\nonumber&\le& \int_0^r (x_1-s)^{q(H-1)} (x_2-s)^{q(H-\frac{3}{2})}ds,
\end{eqnarray}
for every $q\ge 2$. In case $1\le q < 2$, we apply H\"older's inequality w.r.t. the probability measure $\mathbb{P}$

\begin{eqnarray}
\nonumber\mathbb{E}|\mathbf{D}^\epsilon_{x_1,x_2;r}B|^q&\lesssim_q& \mathbb{E}\Bigg( \int_0^r \Big| \mathcal{D}^\epsilon_{s,x_1}B \frac{1 }{\epsilon} \Delta_\epsilon K(x_2,s)\Big|^2ds\Bigg)^{\frac{q}{2}}\\
\label{est5}&\le& \Bigg(\mathbb{E}\int_0^r \Big| \mathcal{D}^\epsilon_{s,x_1}B \frac{1 }{\epsilon} \Delta_\epsilon K(x_2,s) \Big|^2ds\Bigg)^{\frac{q}{2}}.
\end{eqnarray}
%%%FRANCESCO. End of verifications.
It\^o's isometry and Mean Value Theorem yield 

\begin{equation}\label{est6}
\mathbb{E}|\mathcal{D}^\epsilon_{s,x_1}B|^2\lesssim_T \int_0^s(x_1-u)^{2H-3}du,
\end{equation}
Then summing up (\ref{est5}) and (\ref{est6}), we have 

\begin{eqnarray*}
\mathbb{E}|\mathbf{D}^\epsilon_{x_1,x_2;r}B|^q&\lesssim_{q,H,T}&\Bigg(\int_0^r \int_0^s (x_1-u)^{2H-3}du (x_2-s)^{2(H-\frac{3}{2})}ds\Bigg)^{\frac{q}{2}}\\
\nonumber&\lesssim_{q,H}& \Bigg(\int_0^r (x_1-s)^{2H-2} (x_2-s)^{2H-3}ds\Bigg)^{\frac{q}{2}}. 
\end{eqnarray*}
for every $1\le q< 2$.
\end{proof}
\begin{remark}\label{intDeps}
By using Lemma \ref{Depsilonbounds},
  one can easily check that 
$$\int_{[0,T]^2}\int_0^{x_1\wedge x_2} \sup_{\epsilon \in (0,1)}\big\|\mathbf{D}^\epsilon_{x_1,x_2;r}B\big\|^q_{L^q(\Omega)}drdx_1dx_2 < \infty,$$
for every $1\le q < \frac{2}{\frac{3}{2}-H} $. 
\end{remark}

Let us fix $\frac{1}{2} < H < 1$. In order to establish the isometry for the forward stochastic integral w.r.t. a square-integrable adapted process, we will
require that the process $Y$ fulfills the following assumptions.

\

%\noindent \textbf{Assumption I1:}
\begin{assumption}\label{I1}
  \begin{equation}
    %\label{ass1}
 \int_0^T \|Y_t\|^2_{L^2(\Omega)}dt < \infty.
\end{equation}
\end{assumption}

\

%\noindent \textbf{Assumption I2:} 
%\begin{assumption} \label{I2}
%  \begin{equation}
    %\label{ass2}
%\int_0^T\int_0^T \big\| Y_{x_1}\big\|_{L^2(\Omega)} \big\| Y_{x_2} \big\|_{L^2(\Omega)} \frac{\partial^2 R}{\partial x_1 \partial x_2}(x_1,x_2)dx_1dx_2 <\infty.
%\end{equation}
%\end{assumption}
\  

%\noindent \textbf{Assumption I3:}
\begin{assumption} \label{I3}
  \begin{equation}\
    %label{ass3}
\int_{[0,T]^2}\int_0^{x_1\wedge x_2} \|\varphi^{(1)}_Y(x_1,r)\varphi^{(1)}_Y(x_2,r)\|_{L^p(\Omega)}  \sup_{\epsilon \in (0,1)}\| \mathbf{D}^\epsilon_{x_1,x_2;r}B\|_{L^q(\Omega)} drdx_1dx_2<\infty,
\end{equation}
for two conjugate exponents $1< p \le \infty$, $1\le q < \infty$. 
\end{assumption}
\

\begin{assumption} \label{I4}
%\noindent \textbf{Assumption I4:}
\begin{equation}\label{ass4}
\mathbb{E}\int_0^T\Bigg(\int_r^T\int_r^t  |\varphi^{(2)}_Y(t,v;r)|\frac{\partial K}{\partial t}(t,v) dv dt\Bigg)^2dr < \infty. 
\end{equation}
\end{assumption}

\

%\noindent \textbf{Assumption I5:}
\begin{assumption} \label{I5}
  \begin{equation}
    %\label{ass5}
  \int_0^T \int_0^t\Big|\mathbb{E}[\varphi^{(1)}_Y(t,u)]\Big| \frac{\partial K}{\partial t}(t,u)dudt < +\infty. 
\end{equation}
\end{assumption}

\begin{remark}\label{modHtrem}
Observe that Assumption \ref{I1} implies 

\begin{equation}\label{modHtensor}
\int_0^T\int_0^T \big\| Y_{x_1}\big\|_{L^2(\Omega)} \big\| Y_{x_2} \big\|_{L^2(\Omega)} \frac{\partial^2 R}{\partial x_1 \partial x_2}(x_1,x_2)dx_1dx_2 <\infty,
\end{equation}
see Proposition \ref{wiener}. 
\end{remark}

The next lemma provides an upper bound related to Assumption \ref{I4} in terms of the first martingale derivative $\varphi^{(1)}_Y$. This is indeed not surprising in view of relation (\ref{pq2}). In the sequel, we recall that $\bar{\varphi}^{(1)}_Y = \varphi^{(1)}_Y - \mathbb{E}[\varphi_Y^{(1)}]$. 

\begin{lemma}\label{bassa3}
There exists a constant $C$ which only depends on $T$ such that 
\begin{equation}\label{cond5}
\mathbb{E}\int_0^T\Bigg(\int_r^T\int_r^t  |\varphi^{(2)}_Y(t,v;r)|\frac{\partial K}{\partial t}(t,v) dv dt\Bigg)^2dr
\end{equation}
$$\le C \int_J \big\| \mathbb{E}^{v_1\wedge v_2} \big[ \bar{\varphi}^{(1)}_Y(x_1,v_1) \big] \big\|_{L^2(\Omega)} \big\| \mathbb{E}^{v_1\wedge v_2} \big[ \bar{\varphi}^{(1)}_Y(x_2,v_2) \big] \big\|_{L^2(\Omega)}\mu(dv_1dv_2dx_1dx_2),$$
where 

\begin{equation}\label{mumeasure}
\mu(dv_1dv_2dx_1dx_2):= \frac{\partial K}{\partial x_1}(x_1,v_1)\frac{\partial K}{\partial x_2}(x_2,v_2)dv_1dv_2dx_1dx_2
\end{equation}
and 

\begin{equation}\label{Jset}
J = \{(v_1,v_2,x_1,x_2); v_1 < x_1, v_2 < x_2, v_1\wedge v_2 < x_1\wedge x_2 \}.
\end{equation} 
\end{lemma}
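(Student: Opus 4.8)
The plan is to reduce the left-hand side of \eqref{cond5} to an object involving only $\varphi^{(1)}_Y$ by using the martingale representation of $\varphi^{(1)}_Y$ together with the It\^o isometry applied to the inner stochastic integral. First I would expand the square inside the $dr$-integral as a double integral over $(t_1,v_1)$ and $(t_2,v_2)$ with $r<v_i<t_i\le T$, and then apply Fubini to write
\begin{equation*}
\text{LHS of }\eqref{cond5} = \int_{\{r<v_i<t_i\le T\}} \mathbb{E}\big[\varphi^{(2)}_Y(t_1,v_1;r)\varphi^{(2)}_Y(t_2,v_2;r)\big]\,\mathrm{sgn}(\cdots)\,\frac{\partial K}{\partial t_1}(t_1,v_1)\frac{\partial K}{\partial t_2}(t_2,v_2)\,dv_1dv_2dt_1dt_2dr,
\end{equation*}
where the absolute values are handled by bounding $|\varphi^{(2)}_Y(t_1,v_1;r)\varphi^{(2)}_Y(t_2,v_2;r)| \le \tfrac12(|\varphi^{(2)}_Y(t_1,v_1;r)|^2+|\varphi^{(2)}_Y(t_2,v_2;r)|^2)$ or, cleaner, by Cauchy--Schwarz in $L^2(\Omega)$ so that the cross moment is controlled by $\|\varphi^{(2)}_Y(t_1,v_1;\cdot)\|_{L^2(\Omega\times[0,v_1\wedge v_2])}\|\varphi^{(2)}_Y(t_2,v_2;\cdot)\|_{L^2(\Omega\times[0,v_1\wedge v_2])}$ after integrating $r$ over $(0,v_1\wedge v_2)$ since $\varphi^{(2)}_Y(t_i,v_i;\cdot)$ is only defined for the last argument below $v_i$.

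The key algebraic input is relation \eqref{phi2eq}: for fixed $t,s$ one has $\int_0^s \mathbb{E}|\varphi^{(2)}_Y(t,s;r)|^2\,dr = \|\bar\varphi^{(1)}_Y(t,s)\|_{L^2(\Omega)}^2$. More precisely, since $\varphi^{(1)}_Y(t,s) = \mathbb{E}[\varphi^{(1)}_Y(t,s)] + \int_0^s \varphi^{(2)}_Y(t,s;r)\,dW_r$, the It\^o isometry gives for any $w\le s$ that $\int_0^w \mathbb{E}|\varphi^{(2)}_Y(t,s;r)|^2\,dr = \|\mathbb{E}^w[\bar\varphi^{(1)}_Y(t,s)]\|_{L^2(\Omega)}^2$. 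Taking $w = v_1\wedge v_2$ (which is $\le v_i$ on the relevant region) is exactly what produces the conditional expectations $\mathbb{E}^{v_1\wedge v_2}[\bar\varphi^{(1)}_Y(x_i,v_i)]$ appearing on the right-hand side of \eqref{cond5}. So after Cauchy--Schwarz in $L^2(\Omega)$ on the cross term and then Cauchy--Schwarz (or just the elementary bound $ab\le\frac12(a^2+b^2)$ combined with symmetry) in the $r$-integral, each factor $\big(\int_0^{v_1\wedge v_2}\mathbb{E}|\varphi^{(2)}_Y(t_i,v_i;r)|^2\,dr\big)^{1/2}$ collapses to $\|\mathbb{E}^{v_1\wedge v_2}[\bar\varphi^{(1)}_Y(t_i,v_i)]\|_{L^2(\Omega)}$, and relabelling $(t_i,v_i)$ as $(x_i,v_i)$ together with the observation that the region $\{r<v_i<t_i,\ r<v_1\wedge v_2\}$ projects onto $J$ in \eqref{Jset} yields precisely the claimed bound with $\mu$ as in \eqref{mumeasure}. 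The constant $C$ depending only on $T$ absorbs the factor from the symmetrization and any bookkeeping of the finite measure $\frac{\partial K}{\partial t}(t,v)\,dv\,dt$ on $[0,T]^2$.

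The main obstacle I anticipate is the careful Fubini/measurability justification: one must verify that all the interchanges of $\mathbb{E}$, $dr$, and the $(v_i,t_i)$-integrals are legitimate, which is where Assumption \ref{I0} (joint measurability of $\varphi^{(2)}_Y$ along the simplex) and the finiteness of the beta-type kernel $\frac{\partial^2 R}{\partial t\partial s}$ from Lemma \ref{repSECDER} are used; a priori the left-hand side could be $+\infty$, so the argument is really an inequality between a possibly-infinite quantity and the right-hand side, and one should run the estimates on truncated/nonnegative integrands and pass to the limit by monotone convergence. A secondary technical point is pinning down the exact domain of $r$: because $\varphi^{(2)}_Y(t_i,v_i;r)$ is defined only for $r<v_i$, the inner $r$-integral in the expansion of the square naturally runs over $(0,v_1\wedge v_2)$, and one should note that extending it to $(0,v_i)$ (if one prefers) only increases the integrand, so no loss occurs; this is the step that forces the conditioning level $v_1\wedge v_2$ rather than $v_i$ and hence the precise form of $J$.
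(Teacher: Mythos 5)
Your proposal is correct and follows essentially the same route as the paper: expand the square, apply Fubini to land on the region $J$ with the $r$-integral over $(0,v_1\wedge v_2)$, apply Cauchy--Schwarz twice (once in $L^2(\Omega)$ on the cross moment, once in the $r$-integral), and identify $\int_0^{v_1\wedge v_2}\|\varphi^{(2)}_Y(x_i,v_i;r)\|^2_{L^2(\Omega)}dr$ with $\|\mathbb{E}^{v_1\wedge v_2}[\bar\varphi^{(1)}_Y(x_i,v_i)]\|^2_{L^2(\Omega)}$. The only cosmetic difference is that you invoke the It\^o isometry for that last identification (giving an exact equality) where the paper cites Burkholder--Davis--Gundy (giving equivalence up to constants); both are valid and yours is marginally sharper.
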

\begin{proof}
Let $I=\mathbb{E}\int_0^T\Big(\int_r^T\int_r^t  |\varphi^{(2)}_Y(t,v;r)|\frac{\partial K}{\partial t}(t,v) dv dt\Big)^2dr$. Fubini's theorem yields 

$$I = \mathbb{E}\int_J \int_0^{v_1\wedge v_2} |\varphi^{(2)}_Y(x_1,v_1;r) \varphi^{(2)}_Y(x_2,v_2;r)|dr \mu(dv_1dv_2dx_1dx_2) $$
Cauchy-Schwarz's inequality yields 

$$\mathbb{E}|\varphi^{(2)}_Y(x_1,v_1;r) \varphi^{(2)}_Y(x_2,v_2;r)|\le \|\varphi^{(2)}_Y(x_1,v_1;r)\|_{L^2(\Omega)} \|\varphi^{(2)}_Y(x_2,v_2;r)\|_{L^2(\Omega)},$$
for $r < v_1\wedge v_2$ and $(v_1,v_2,x_1,x_2) \in J$ and another application of Cauchy-Schwarz's inequality yield

$$\mathbb{E}\int_0^{v_1\wedge v_2} |\varphi^{(2)}_Y(x_1,v_1;r) \varphi^{(2)}_Y(x_2,v_2;r)|dr$$
$$\le \Bigg( \int_0^{v_1\wedge v_2} \|\varphi^{(2)}_Y(x_1,v_1;r)\|^2_{L^2(\mathbb{P})}dr \Bigg)^{\frac{1}{2}} \Bigg( \int_0^{v_1\wedge v_2} \|\varphi^{(2)}_Y(x_2,v_2;r)\|^2_{L^2(\mathbb{P})}dr \Bigg)^{\frac{1}{2}}$$
for $(v_1,v_2,x_1,x_2) \in J$. From Burkholder-Davis-Gundy inequality, we know that 

$$\int_0^{v_1\wedge v_2} \|\varphi^{(2)}_Y(x_1,v_1;r)\|^2_{L^2(\Omega)}dr\simeq
\big\| \mathbb{E}^{v_1\wedge v_2}\big[ \bar{\varphi}^{(1)}_Y(x_1,v_1)\big]\big\|^2_{L^2(\Omega)}$$
and 

$$\int_0^{v_1\wedge v_2} \|\varphi^{(2)}_Y(x_2,v_2;r)\|^2_{L^2(\Omega)}dr\simeq \big\| \mathbb{E}^{v_1\wedge v_2}\big[ \bar{\varphi}^{(1)}_Y(x_2,v_2)\big]\big\|^2_{L^2(\Omega)},$$
for $(v_1,v_2,x_1,x_2) \in J$. This concludes the proof.

\end{proof}
Inspired by Lemma \ref{bassa3}, we will also consider the following assumption.

\begin{assumption} \label{I6}
For $\bar{\varphi}^{(1)}_Y = \varphi^{(1)}_Y - \mathbb{E}[\varphi_Y^{(1)}]$,  

\begin{equation}
\int_J \big\| \mathbb{E}^{v_1\wedge v_2} \big[ \bar{\varphi}^{(1)}_Y(x_1,v_1) \big] \big\|_{L^2(\Omega)} \big\| \mathbb{E}^{v_1\wedge v_2} \big[ \bar{\varphi}^{(1)}_Y(x_2,v_2) \big] \big\|_{L^2(\Omega)}\mu(dv_1dv_2dx_1dx_2)<\infty,
\end{equation}
where $\mu$ and $J$ are given by (\ref{mumeasure}) and (\ref{Jset}), respectively. 
\end{assumption}
We are now ready to state the main result of this article.  

\begin{theorem}\label{Koperator}
Suppose that $Y$ satisfies the Assumptions \ref{I0}, \ref{I1},
  \ref{I3}, \ref{I4} and \ref{I5}. Then,   

\begin{eqnarray}
\label{can}\int_0^T Y_sd^-B_s &=& \int_0^T \int_0^s \mathbb{E}[\varphi^{(1)}_Y(s,u)]\frac{\partial K}{\partial s} (s,u)duds\\
\nonumber&+& \int_0^T \mathcal{K}Y(T,r)dW_r,
\end{eqnarray}
where

\begin{eqnarray}
\label{calKop}\mathcal{K}Y(T,r)&:=& \int_r^T \Bigg\{\mathbb{E}^r[Y_t] \frac{\partial K}{\partial t} (t,r) + \varphi_Y^{(1)}(t,r)\mathcal{D}_{r;t}B\\
\nonumber&+& \int_r^t \varphi^{(2)}_Y(t,v; r) \frac{\partial K}{\partial t}(t,v)dv\Bigg\}dt,
\end{eqnarray}
for $0\le r< T$.
\end{theorem}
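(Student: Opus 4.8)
The plan is to prove Theorem \ref{Koperator} by passing to the $L^2(\Omega)$-limit as $\epsilon \downarrow 0$ in the decomposition \eqref{Kepsilon} of $I_1(\epsilon)$, having already disposed of $I_2(\epsilon)$ via Lemma \ref{Drep1}. Writing $\mathcal{K}^\epsilon Y(T,r)$ for the bracketed integrand in \eqref{Kepsilon}, the claim reduces to two convergences: first, the deterministic-type term
\[
\int_0^T \int_0^t \mathbb{E}[\varphi^{(1)}_Y(t,u)] \tfrac{1}{\epsilon}\Delta_\epsilon K(t,u)\, du\, dt \longrightarrow \int_0^T \int_0^s \mathbb{E}[\varphi^{(1)}_Y(s,u)] \tfrac{\partial K}{\partial s}(s,u)\, du\, ds,
\]
and second, the It\^o-integral term $\int_0^T \mathcal{K}^\epsilon Y(T,r)\, dW_r \to \int_0^T \mathcal{K} Y(T,r)\, dW_r$ in $L^2(\Omega)$, which by It\^o's isometry is equivalent to $\mathcal{K}^\epsilon Y(T,\cdot) \to \mathcal{K} Y(T,\cdot)$ in $L^2(\Omega \times [0,T])$. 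The first convergence follows from the pointwise bound \eqref{b1} (i.e. $|\tfrac{1}{\epsilon}\Delta_\epsilon K| \le \tfrac{\partial K}{\partial t}$), pointwise convergence $\tfrac{1}{\epsilon}\Delta_\epsilon K \to \tfrac{\partial K}{\partial t}$, and dominated convergence justified by Assumption \ref{I5}; this is essentially the argument already used in Proposition \ref{wiener}.

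The second convergence is the heart of the proof and I would handle the three summands of $\mathcal{K}^\epsilon Y - \mathcal{K} Y$ separately. For the term $\int_r^T \mathbb{E}^r[Y_t]\{\tfrac{1}{\epsilon}\Delta_\epsilon K(t,r) - \tfrac{\partial K}{\partial t}(t,r)\}\, dt$, I would mimic the Wiener-integral estimate in Proposition \ref{wiener}: use \eqref{b1}, pointwise convergence, Fubini, and dominated convergence in $L^2(\Omega \times [0,T])$, with integrability supplied by Assumption \ref{I1} through Remark \ref{modHtrem} (i.e. \eqref{modHtensor}) after conditioning. For the term $\int_r^t \varphi^{(2)}_Y(t,v;r)\{\tfrac{1}{\epsilon}\Delta_\epsilon K(t,v) - \tfrac{\partial K}{\partial t}(t,v)\}\, dv\, dt$, the pointwise bound again gives domination by $2\int_r^t |\varphi^{(2)}_Y(t,v;r)| \tfrac{\partial K}{\partial t}(t,v)\, dv\, dt$; Assumption \ref{I4} (whose finiteness is moreover controlled by $\varphi^{(1)}_Y$ via Lemma \ref{bassa3}) gives the $L^2(\Omega \times [0,T])$-domination, and dominated convergence closes it.

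The genuinely delicate summand is the one involving the Nelson derivative, namely
\[
\int_r^T \varphi^{(1)}_Y(t,r)\bigl(\mathcal{D}^\epsilon_{r,t}B - \mathcal{D}_{r,t}B\bigr)\, dt,
\]
and its squared $L^2(\Omega)$-norm, after integrating in $r$ and applying Fubini, becomes an integral over the simplex $\{r < x_1 \wedge x_2\}$ of $\mathbb{E}[\varphi^{(1)}_Y(x_1,r)\varphi^{(1)}_Y(x_2,r)(\mathcal{D}^\epsilon_{r,x_1}B - \mathcal{D}_{r,x_1}B)(\mathcal{D}^\epsilon_{r,x_2}B - \mathcal{D}_{r,x_2}B)]$. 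Here I would apply H\"older with the conjugate exponents $(p,q)$ from Assumption \ref{I3} to split off $\|\varphi^{(1)}_Y(x_1,r)\varphi^{(1)}_Y(x_2,r)\|_{L^p(\Omega)}$ from the $L^q(\Omega)$-norm of the product of Nelson-derivative differences; this last factor is of the form $\|\mathbf{D}^\epsilon_{x_1,x_2;r}B - \mathbf{D}_{x_1,x_2;r}B\|$-type, bounded uniformly in $\epsilon$ by Lemma \ref{Depsilonbounds} and Remark \ref{intDeps}, so Assumption \ref{I3} provides an integrable dominating function on the simplex. Then I would establish pointwise (in $(x_1,x_2,r)$) $L^q(\Omega)$-convergence $\mathbf{D}^\epsilon_{x_1,x_2;r}B \to \mathbf{D}_{x_1,x_2;r}B$ --- which reduces to the defining convergence $\mathcal{D}^\epsilon \to \mathcal{D}$ in \eqref{nelson} together with the mean-value bound \eqref{est3} on $\tfrac{1}{\epsilon}\Delta_\epsilon K$ --- and invoke dominated convergence once more. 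The main obstacle is this last step: one must control the bilinear stochastic object $\mathbf{D}^\epsilon_{x_1,x_2;r}B$ uniformly near the diagonal $x_1 = x_2$ (where $\mathcal{D}_{t,t}B$ fails to exist) and near $r = x_1 \wedge x_2$, and it is precisely the interplay of the off-diagonal estimates in Lemma \ref{Depsilonbounds} with the $L^p$-integrability of $\varphi^{(1)}_Y \otimes \varphi^{(1)}_Y$ in Assumption \ref{I3} that makes the dominated-convergence argument go through; assembling the three limits then yields \eqref{can}--\eqref{calKop}.
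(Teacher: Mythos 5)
Your treatment of $I_2(\epsilon)$ (Lemma \ref{Drep1}), of the mean term via Assumption \ref{I5}, of the $\varphi^{(2)}_Y$-term via Assumption \ref{I4} (Lemma \ref{secconve}), and of the $\mathbb{E}^r[Y_t]$-term via Assumption \ref{I1} and Remark \ref{remA4} coincides with the paper's argument. The problem is in the Nelson-derivative term, which you correctly single out as the heart of the matter. You propose to estimate directly
\[
\mathbb{E}\int_0^T\Big|\int_r^T\varphi^{(1)}_Y(t,r)\big(\mathcal{D}^\epsilon_{r,t}B-\mathcal{D}_{r,t}B\big)\,dt\Big|^2dr
\]
by expanding the square, applying H\"older with the exponents $(p,q)$ of Assumption \ref{I3}, and dominating the resulting $L^q(\Omega)$-factor. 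But that factor is the product $(\mathcal{D}^\epsilon_{r,x_1}B-\mathcal{D}_{r,x_1}B)(\mathcal{D}^\epsilon_{r,x_2}B-\mathcal{D}_{r,x_2}B)$, which is \emph{not} of the form $\mathbf{D}^\epsilon_{x_1,x_2;r}B-\mathbf{D}_{x_1,x_2;r}B$: its It\^o-product expansion produces (i) a deterministic bracket term bounded by $\frac{\partial^2 R}{\partial x_1\partial x_2}(x_1,x_2)$, for which H\"older with $p>1$ is the wrong pairing (one must use Cauchy--Schwarz in $L^2(\Omega)$ and (\ref{modHtensor}), as in Lemma \ref{bofder}, since $\|\varphi^{(1)}_Y\varphi^{(1)}_Y\|_{L^p}\frac{\partial^2 R}{\partial x_1\partial x_2}$ need not be integrable), and (ii) stochastic integrals with mixed or difference kernels, e.g. $\int_0^r(\mathcal{D}^\epsilon_{s,x_1}B-\mathcal{D}_{s,x_1}B)\{\frac{1}{\epsilon}\Delta_\epsilon K(x_2,s)-\frac{\partial K}{\partial t}(x_2,s)\}dW_s$. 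For these, Lemma \ref{Depsilonbounds} yields only the explicit power-law majorant, whereas Assumption \ref{I3} guarantees integrability of $\|\varphi^{(1)}_Y(x_1,r)\varphi^{(1)}_Y(x_2,r)\|_{L^p}\sup_{\epsilon}\|\mathbf{D}^\epsilon_{x_1,x_2;r}B\|_{L^q}$ and nothing more; the mixed-kernel integrals are not among the $\mathbf{D}^\epsilon$'s, and their $L^q$-norms are not controlled by $\sup_{\epsilon}\|\mathbf{D}^\epsilon_{x_1,x_2;r}B\|_{L^q}$. Hence under the stated hypotheses your dominating function is simply not available, and the dominated-convergence step does not close; it would require a strictly stronger hypothesis (integrability of $\|\varphi^{(1)}_Y\varphi^{(1)}_Y\|_{L^p}$ against the explicit kernel bounds of Lemma \ref{Depsilonbounds}).

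The paper circumvents this precisely by never estimating the difference directly. It proves weak convergence of $\int_\cdot^T\varphi^{(1)}_Y(t,\cdot)\mathcal{D}^\epsilon_{\cdot,t}B\,dt$ in $L^2_a(\Omega\times[0,T])$ (Lemma \ref{L1compac}, needing only Assumption \ref{I1}, because there Cauchy--Schwarz transfers all the weight onto $\int_0^T\int_r^T\mathbb{E}|\mathcal{D}^\epsilon_{r,t}B-\mathcal{D}_{r,t}B|^2dt\,dr\to 0$), uniform boundedness and convergence of the $L^2$-norms (Lemmas \ref{bofder}, \ref{L2norm1} and Proposition \ref{firstISO}), where the object to be dominated is $\mathbb{E}\big[\varphi^{(1)}_Y(x_1,r)\varphi^{(1)}_Y(x_2,r)\mathcal{D}^\epsilon_{r,x_1}B\,\mathcal{D}^\epsilon_{r,x_2}B\big]$ --- whose $A_1+A_2+A_3$ decomposition is matched exactly by (\ref{modHtensor}) and Assumption \ref{I3} --- and then uses the Hilbert-space fact that weak convergence together with convergence of norms implies strong convergence. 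To repair your proof you must either strengthen Assumption \ref{I3} along the lines above, or replace your direct estimate by this weak-convergence-plus-norm-convergence argument.
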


Theorem \ref{Koperator} establish the Volterra-type martingale representation of $\int_0^T Yd^-B$ via the operator $\mathcal{K}Y(T,\cdot)$ under regularity conditions on $(Y,\varphi^{(1)}_Y, \varphi^{(2)}_Y)$. Starting from (\ref{Kepsilon}), the appearance of the second martingale derivative is due to the following $L^2(\Omega)$-limit
\begin{eqnarray}
\label{limisec}\lim_{\epsilon\downarrow 0}\int_0^T \int_0^t\frac{1}{\epsilon}\Delta_\epsilon K(t,u)\varphi^{(1)}_Y(t,u)dudt &=& \int_0^T \int_0^t \mathbb{E}[\varphi^{(1)}_Y(t,u)]\frac{\partial K}{\partial t}(t,u)dudt\\
\nonumber&+& \int_0^T \int_r^T \int_r^t \varphi^{(2)}_Y(t,v; r) \frac{\partial K}{\partial t}(t,v)dvdtdW_r.
\end{eqnarray}
The limit (\ref{limisec}) can be controlled via $\varphi^{(1)}_Y$ due to (\ref{cond5}) in Lemma \ref{bassa3}. Then, we can state the following corollary of Theorem \ref{Koperator}.

\begin{corollary}\label{CorMTH}
  Suppose that
  % $(Y,\varphi^{(1)}_Y)$
  $Y$ satisfies the Assumptions \ref{I0}, \ref{I1}, \ref{I3}, \ref{I5} and (\ref{I6}). Then, $\int_0^T Y_sd^-B_s$ follows (\ref{can}).  
\end{corollary}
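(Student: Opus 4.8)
The plan is to derive Corollary \ref{CorMTH} directly from Theorem \ref{Koperator} by showing that, under Assumptions \ref{I0}, \ref{I1}, \ref{I3}, \ref{I5} together with \ref{I6}, the remaining hypothesis \ref{I4} of Theorem \ref{Koperator} is automatically satisfied. Concretely, Lemma \ref{bassa3} furnishes the pointwise bound
\[
\mathbb{E}\int_0^T\Bigg(\int_r^T\int_r^t  |\varphi^{(2)}_Y(t,v;r)|\tfrac{\partial K}{\partial t}(t,v)\, dv\, dt\Bigg)^2 dr
\;\le\; C\int_J \big\| \mathbb{E}^{v_1\wedge v_2}[\bar{\varphi}^{(1)}_Y(x_1,v_1)] \big\|_{L^2(\Omega)}\, \big\| \mathbb{E}^{v_1\wedge v_2}[\bar{\varphi}^{(1)}_Y(x_2,v_2)] \big\|_{L^2(\Omega)}\,\mu(dv_1dv_2dx_1dx_2),
\]
and the right-hand side is precisely the quantity that Assumption \ref{I6} asserts is finite. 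Hence \ref{I6} implies \ref{I4}, and all five hypotheses of Theorem \ref{Koperator} hold. Applying that theorem verbatim gives formula \eqref{can}, which is the assertion of the corollary.

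In slightly more detail, I would first invoke Assumption \ref{I0} so that the martingale representations \eqref{martREP1} and the one for $\varphi^{(1)}_Y$ are available and jointly measurable, guaranteeing all the Fubini manipulations used inside Lemma \ref{bassa3} are legitimate. Then I would state the chain of implications: Assumption \ref{I6} $\Rightarrow$ the right-hand side of \eqref{cond5} is finite $\Rightarrow$ (by Lemma \ref{bassa3}) the left-hand side of \eqref{cond5} is finite $\Rightarrow$ Assumption \ref{I4} holds. At this point the hypotheses of Theorem \ref{Koperator} are met, so \eqref{can}–\eqref{calKop} follow, and I would remark that the intermediate $L^2(\Omega)$-limit \eqref{limisec} is exactly the place where \ref{I4} (now deduced from \ref{I6}) is consumed in the proof of Theorem \ref{Koperator}.

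There is essentially no new obstacle here: the entire content of the corollary is the observation that \ref{I6} is a sufficient condition for \ref{I4}, and this was already engineered into Lemma \ref{bassa3}. If one wanted to be completely self-contained one could reproduce the short Cauchy--Schwarz and Burkholder--Davis--Gundy argument from the proof of Lemma \ref{bassa3}, but since that lemma is proved earlier in the excerpt it suffices to cite it. The only minor point to be careful about is that the constant $C$ in \eqref{cond5} depends only on $T$, so finiteness of the $\mu$-integral genuinely transfers to finiteness of \eqref{ass4} with no loss; I would make that explicit. Thus the proof is a one-line deduction:

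\begin{proof}
By Lemma \ref{bassa3} there is a constant $C=C(T)$ with
\[
\mathbb{E}\int_0^T\Bigg(\int_r^T\int_r^t  |\varphi^{(2)}_Y(t,v;r)|\tfrac{\partial K}{\partial t}(t,v)\, dv\, dt\Bigg)^2 dr
\le C\int_J \big\| \mathbb{E}^{v_1\wedge v_2}[\bar{\varphi}^{(1)}_Y(x_1,v_1)] \big\|_{L^2(\Omega)} \big\| \mathbb{E}^{v_1\wedge v_2}[\bar{\varphi}^{(1)}_Y(x_2,v_2)] \big\|_{L^2(\Omega)}\mu(dv_1dv_2dx_1dx_2).
\]
Assumption \ref{I6} states that the right-hand side is finite, hence Assumption \ref{I4} holds. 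Consequently $Y$ satisfies Assumptions \ref{I0}, \ref{I1}, \ref{I3}, \ref{I4} and \ref{I5}, and Theorem \ref{Koperator} applies, yielding \eqref{can} with $\mathcal{K}Y(T,\cdot)$ given by \eqref{calKop}.
\end{proof}
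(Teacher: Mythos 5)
Your proposal is correct and matches the paper's intent exactly: the corollary is stated immediately after the remark that the limit \eqref{limisec} ``can be controlled via $\varphi^{(1)}_Y$ due to (\ref{cond5}) in Lemma \ref{bassa3},'' i.e.\ the paper's implicit proof is precisely the deduction Assumption \ref{I6} $\Rightarrow$ Assumption \ref{I4} $\Rightarrow$ Theorem \ref{Koperator} applies. No gaps; your explicit write-up is if anything more careful than the paper's, which leaves the argument to the reader.
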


%\begin{remark}
%Some comments about Malliavin calculus and our approach. 
%\end{remark}

\subsection{Proof of Theorem \ref{Koperator}}
From (\ref{Kepsilon}), we should investigate the limits

$$\lim_{\epsilon\downarrow 0}\mathbb{E}\int_0^T|\mathcal{K}_\epsilon Y(T,r) - \mathcal{K} Y(T,r)|^2dr=0$$
and 
$$
\lim_{\epsilon \downarrow 0}\int_0^T \int_0^t \mathbb{E}[\varphi^{(1)}_Y(t,u)]\frac{1}{\epsilon}\Delta_\epsilon K(t,u)dudt = \int_0^T \int_0^t \mathbb{E}[\varphi^{(1)}_Y(t,u)]\frac{\partial K}{\partial t} (t,u)dudt,
$$
where

$$
\mathcal{K}_\epsilon Y (T,r) = \int_r^T \varphi^{(1)}_Y(t,r)\mathcal{D} ^\epsilon_{r,t}B dt + \int_r^T \int_r^t \varphi^{(2)}_Y(t,v;r)\frac{1}{\epsilon}\Delta_\epsilon K(t,v)dvdt
$$
$$+ \int_r^T \mathbb{E}^r [Y_t] \frac{1}{\epsilon}\Delta_\epsilon K(t,r)dt.
$$
In the sequel, we denote 

$$\frac{\partial^2 R^{(r)}}{\partial x_1 \partial x_2}(x_1,x_2):= \int_0^{r} \frac{\partial K}{\partial x_1}(x_1,s) \frac{\partial K}{\partial x_2}(x_2,s)ds,$$
for $r \le x_1 \wedge x_2$.

\begin{proposition}\label{firstISO}
  Suppose that the process $Y$
  % $(Y,\varphi^{(1)}_Y)$
  satisfies Assumptions \ref{I1} and \ref{I3}. Then,

$$
\mathbb{E}\int_0^T \Bigg|\int_r^T \varphi^{(1)}_Y(t,r)\{\mathcal{D}^\epsilon_{r,t}B-\mathcal{D}_{r,t}B\} dt\Bigg|^2dr\rightarrow 0,
$$
as $\epsilon\downarrow 0$. Moreover, 

$$\mathbb{E}\int_0^T \Bigg|\int_r^T \varphi^{(1)}_Y(t,r)\mathcal{D}_{r,t}B dt\Bigg|^2dr=$$
$$\mathbb{E}\int_{[0,T]^2} \int_0^{x_1\wedge x_2}\varphi^{(1)}_Y(x_1,r)\varphi^{(1)}_Y(x_2,r) \frac{\partial^2 R^{(r)}}{\partial x_1 \partial x_2}(x_1,x_2) drdx_1dx_2$$
$$+ \mathbb{E}\int_{[0,T]^2} \int_0^{x_1\wedge x_2}\varphi^{(1)}_Y(x_1,r)\varphi^{(1)}_Y(x_2,r) \mathbf{D}_{x_1,x_2,r}B drdx_1dx_2$$
$$+ \mathbb{E}\int_{[0,T]^2}\int_0^{x_1\wedge x_2}\varphi^{(1)}_Y(x_1,r)\varphi^{(1)}_Y(x_2,r) \mathbf{D}_{x_2,x_1,r}B drdx_1dx_2.$$

\end{proposition}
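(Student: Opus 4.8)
The plan is to push everything through the underlying Brownian motion $W$. Writing $\mathcal{D}^\epsilon_{r,t}B=\int_0^r\tfrac1\epsilon\Delta_\epsilon K(t,u)\,dW_u$ and $\mathcal{D}_{r,t}B=\int_0^r\tfrac{\partial K}{\partial t}(t,u)\,dW_u$, I set $G_\epsilon(t,r):=\mathcal{D}^\epsilon_{r,t}B-\mathcal{D}_{r,t}B=\int_0^r\xi_\epsilon(t,u)\,dW_u$, with $\xi_\epsilon$ as introduced in the proof of Proposition \ref{wiener}. First I would record the facts I need about $\xi_\epsilon$: by \eqref{b1} one has $0\le\tfrac1\epsilon\Delta_\epsilon K(t,u)\le\tfrac{\partial K}{\partial t}(t,u)$, hence $|\xi_\epsilon(t,u)|\le\tfrac{\partial K}{\partial t}(t,u)$, while $\xi_\epsilon(t,u)\to0$ pointwise on $\{0<u<t\}$. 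Since $G_\epsilon(t,r)$ is a centred Gaussian variable with $\|G_\epsilon(t,r)\|_{L^2(\Omega)}^2=\int_0^r|\xi_\epsilon(t,u)|^2\,du$, dominated convergence gives $\|G_\epsilon(t,r)\|_{L^q(\Omega)}\to0$ for every $q\ge1$ and every fixed $r<t$, with $\sup_{\epsilon\in(0,1)}\|G_\epsilon(t,r)\|_{L^q(\Omega)}\le c_q\big(\int_0^r|\tfrac{\partial K}{\partial t}(t,u)|^2du\big)^{1/2}<\infty$.

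For the convergence statement, I would expand the square and, before integrating, decompose the product $G_\epsilon(x_1,r)G_\epsilon(x_2,r)$ by It\^o's product formula as $\int_0^r\xi_\epsilon(x_1,s)\xi_\epsilon(x_2,s)\,ds+\mathbf G^\epsilon_{x_1,x_2;r}+\mathbf G^\epsilon_{x_2,x_1;r}$, where $\mathbf G^\epsilon_{x_1,x_2;r}:=\int_0^rG_\epsilon(x_1,s)\,\xi_\epsilon(x_2,s)\,dW_s$. Then $\mathbb{E}\int_0^T\big|\int_r^T\varphi^{(1)}_Y(t,r)G_\epsilon(t,r)\,dt\big|^2dr$ becomes a sum of three triple integrals over $\{0\le r<x_1\wedge x_2\le T\}$. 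For the deterministic bracket term, $\int_0^r\xi_\epsilon(x_1,s)\xi_\epsilon(x_2,s)\,ds\to0$ pointwise and is dominated by $\tfrac{\partial^2 R^{(r)}}{\partial x_1\partial x_2}(x_1,x_2)\le\tfrac{\partial^2 R}{\partial x_1\partial x_2}(x_1,x_2)$; since $\mathbb{E}|\varphi^{(1)}_Y(x_1,r)\varphi^{(1)}_Y(x_2,r)|\le\|\bar Y_{x_1}\|_{L^2(\Omega)}\|\bar Y_{x_2}\|_{L^2(\Omega)}$ after Cauchy--Schwarz and $\int_0^{x_i}\|\varphi^{(1)}_Y(x_i,r)\|_{L^2}^2dr=\|\bar Y_{x_i}\|_{L^2}^2$, Assumption \ref{I1} and \eqref{modHtensor} make dominated convergence applicable and this term vanishes. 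For each martingale correction, H\"older's inequality with the conjugate pair $(p,q)$ of Assumption \ref{I3} gives $\mathbb{E}|\varphi^{(1)}_Y(x_1,r)\varphi^{(1)}_Y(x_2,r)\mathbf G^\epsilon_{x_1,x_2;r}|\le\|\varphi^{(1)}_Y(x_1,r)\varphi^{(1)}_Y(x_2,r)\|_{L^p(\Omega)}\|\mathbf G^\epsilon_{x_1,x_2;r}\|_{L^q(\Omega)}$, so I would prove — rerunning verbatim the estimates of Lemma \ref{Depsilonbounds} (Burkholder--Davis--Gundy; Jensen if $q>2$, H\"older with respect to $\mathbb P$ if $1\le q\le2$; the bound $|\xi_\epsilon(x,\cdot)|\le\tfrac{\partial K}{\partial x}(x,\cdot)$; and $\|G_\epsilon(x_1,s)\|_{L^{q\vee2}(\Omega)}\lesssim(\int_0^s|\xi_\epsilon(x_1,u)|^2du)^{1/2}$) — that $\|\mathbf G^\epsilon_{x_1,x_2;r}\|_{L^q(\Omega)}$ is dominated by a constant multiple of $\sup_{\epsilon\in(0,1)}\|\mathbf D^\epsilon_{x_1,x_2;r}B\|_{L^q(\Omega)}$ and tends to $0$ pointwise in $(x_1,x_2,r)$ (the decay coming from $\xi_\epsilon\to0$ and dominated convergence in $s$). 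Assumption \ref{I3} is exactly the integrability of this dominating function over $\{0\le r<x_1\wedge x_2\le T\}$ (the diagonal $x_1=x_2$ being Lebesgue-null), so dominated convergence concludes the first assertion, and the same manipulations justify the ordinary/stochastic Fubini used above.

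For the isometry identity, I would apply It\^o's product formula to $\mathcal{D}_{r,x_1}B\,\mathcal{D}_{r,x_2}B$, which by the very definitions of $\partial^2 R^{(r)}/\partial x_1\partial x_2$, $\mathbf D_{x_1,x_2;r}B$ and $\mathbf D_{x_2,x_1;r}B$ reads
\[\mathcal{D}_{r,x_1}B\,\mathcal{D}_{r,x_2}B=\frac{\partial^2 R^{(r)}}{\partial x_1\partial x_2}(x_1,x_2)+\mathbf D_{x_1,x_2;r}B+\mathbf D_{x_2,x_1;r}B.\]
Inserting this into the expansion of $\mathbb{E}\int_0^T\big|\int_r^T\varphi^{(1)}_Y(t,r)\mathcal{D}_{r,t}B\,dt\big|^2dr$ over $\{0\le r<x_1\wedge x_2\le T\}$ and applying (ordinary and stochastic) Fubini term by term yields precisely the claimed three terms, once one checks that each of the three triple integrals converges absolutely. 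The first is handled exactly as the bracket term above (Cauchy--Schwarz and Assumption \ref{I1}); for the other two, the Lemma \ref{Depsilonbounds}-type estimates also give $\mathbf D^\epsilon_{x_1,x_2;r}B\to\mathbf D_{x_1,x_2;r}B$ in every $L^q(\Omega)$, hence $\|\mathbf D_{x_1,x_2;r}B\|_{L^q(\Omega)}\le\sup_{\epsilon\in(0,1)}\|\mathbf D^\epsilon_{x_1,x_2;r}B\|_{L^q(\Omega)}$ (consistently with Remark \ref{supq2} when $q=2$), and then H\"older's inequality with $(p,q)$ and Assumption \ref{I3} supply the required integrability.

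The hard part will be the $\epsilon$-uniform control of the martingale corrections $\mathbf G^\epsilon_{x_1,x_2;r}$: I need a bound whose dependence on $(x_1,x_2,r)$ reproduces \emph{exactly} the weight $\sup_{\epsilon}\|\mathbf D^\epsilon_{x_1,x_2;r}B\|_{L^q(\Omega)}$ occurring in Assumption \ref{I3}, which compels me to run the Lemma \ref{Depsilonbounds} computation separately in the two regimes $1\le q\le2$ and $q>2$ while simultaneously extracting the $\epsilon\to0$ decay (not just a uniform bound) from the pointwise vanishing of $\xi_\epsilon$. A secondary, more mechanical, difficulty is the repeated passage between the iterated-integral form of the squares and their bracket-plus-It\^o-integral decompositions: each such passage invokes the stochastic Fubini theorem, whose integrability hypotheses must be verified, and the $dr$- and $dx_1dx_2$-orders may be interchanged only after the absolute convergence bounds above are in place.
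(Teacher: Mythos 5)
Your architecture is genuinely different from the paper's. You prove the first assertion by a single dominated-convergence argument applied to the expansion of $\mathbb{E}\int_0^T\big|\int_r^T\varphi^{(1)}_Y(t,r)G_\epsilon(t,r)\,dt\big|^2dr$ with $G_\epsilon=\mathcal{D}^\epsilon B-\mathcal{D}B$, whereas the paper never expands the square of the \emph{difference}: it first proves weak convergence of $\int_\cdot^T\varphi^{(1)}_Y(t,\cdot)\mathcal{D}^\epsilon_{\cdot,t}B\,dt$ in $L^2_a(\Omega\times[0,T])$ (Lemma \ref{L1compac} together with the uniform bound of Lemma \ref{bofder}), then proves convergence of the $L^2$-norms by dominated convergence applied to the product $\mathcal{D}^\epsilon_{r,x_1}B\,\mathcal{D}^\epsilon_{r,x_2}B$ itself, and concludes strong convergence from weak convergence plus norm convergence in a Hilbert space. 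That detour exists precisely to avoid the step where your argument has a gap.

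The gap is in the control of the martingale corrections. Applying It\^o's product formula to $G_\epsilon(x_1,r)G_\epsilon(x_2,r)$ produces $\mathbf G^\epsilon_{x_1,x_2;r}=\int_0^rG_\epsilon(x_1,s)\,\xi_\epsilon(x_2,s)\,dW_s$, and you assert that rerunning Lemma \ref{Depsilonbounds} verbatim dominates $\|\mathbf G^\epsilon_{x_1,x_2;r}\|_{L^q(\Omega)}$ by a constant times $\sup_{\epsilon\in(0,1)}\|\mathbf D^\epsilon_{x_1,x_2;r}B\|_{L^q(\Omega)}$. Rerunning those estimates actually yields the explicit kernel bound $\big(\int_0^r(x_1-s)^{q(H-1)}(x_2-s)^{q(H-\frac{3}{2})}ds\big)^{1/q}$ (resp.\ its $1\le q\le 2$ variant), which is an \emph{upper} bound for $\sup_\epsilon\|\mathbf D^\epsilon_{x_1,x_2;r}B\|_{L^q}$ but is not shown to be dominated by it. Assumption \ref{I3} as stated only provides integrability of $\|\varphi^{(1)}_Y(x_1,r)\varphi^{(1)}_Y(x_2,r)\|_{L^p}$ against $\sup_\epsilon\|\mathbf D^\epsilon_{x_1,x_2;r}B\|_{L^q}$, not against the kernel bound, so your dominating function is not a priori integrable under the hypotheses. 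The paper avoids this because the product formula is applied to $\mathcal{D}^\epsilon_{r,x_1}B\,\mathcal{D}^\epsilon_{r,x_2}B$, whose martingale parts are \emph{exactly} $\mathbf D^\epsilon_{x_1,x_2;r}B$ and $\mathbf D^\epsilon_{x_2,x_1;r}B$, i.e.\ literally the objects named in Assumption \ref{I3}. Your route is repairable: $\mathbf G^\epsilon$, $\mathbf D^\epsilon$ and $\mathbf D$ all live in the second Wiener chaos, where all $L^q$-norms are equivalent, and $\|\mathbf G^\epsilon_{x_1,x_2;r}\|^2_{L^2}\le\|\mathbf D_{x_1,x_2;r}B\|^2_{L^2}=\sup_\epsilon\|\mathbf D^\epsilon_{x_1,x_2;r}B\|^2_{L^2}$ (the last identity from the monotonicity of $\epsilon\mapsto\frac{1}{\epsilon}\Delta_\epsilon K$, cf.\ Remark \ref{supq2}), which together give $\|\mathbf G^\epsilon_{x_1,x_2;r}\|_{L^q}\lesssim_q\sup_\epsilon\|\mathbf D^\epsilon_{x_1,x_2;r}B\|_{L^q}$ --- but that uses hypercontractivity, an ingredient absent from Lemma \ref{Depsilonbounds}, and you should state it. The remaining pieces (the bracket term, the pointwise vanishing of $\xi_\epsilon$, and the derivation of the isometry identity from the product formula for $\mathcal{D}_{r,x_1}B\,\mathcal{D}_{r,x_2}B$) coincide with the paper's computations, up to the minor point that your pointwise bound $\mathbb{E}|\varphi^{(1)}_Y(x_1,r)\varphi^{(1)}_Y(x_2,r)|\le\|\bar Y_{x_1}\|_{L^2}\|\bar Y_{x_2}\|_{L^2}$ should be replaced by Cauchy--Schwarz in $\omega$ followed by Cauchy--Schwarz in $r$, as in Lemma \ref{bofder}.
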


We now devote our attention to the proof of Proposition \ref{firstISO}. Our strategy is the following. We will check the sequence $\int_\cdot^{T} \varphi^{(1)}_Y(t,\cdot) \mathcal{D}^\epsilon_{\cdot, t}Bdt$ is weakly relatively compact in $L^2_a(\Omega\times [0,T])$ and it is converging weakly in $L^1(\Omega\times [0,T])$ to the process $\int_\cdot^{T} \varphi^{(1)}_Y(t,\cdot) \mathcal{D}_{\cdot, t}Bdt \in L^2(\Omega\times [0,T])$. This will show that the entire sequence 

$$\int_\cdot^{T} \varphi^{(1)}_Y(t,\cdot) \mathcal{D}^\epsilon_{\cdot, t}Bdt\rightarrow \int_\cdot^{T} \varphi^{(1)}_Y(t,\cdot)\mathcal{D}_{\cdot, t}Bdt$$
converges weakly in $L^2_a(\Omega\times [0,T])$ as $\epsilon\downarrow 0$. At the end, we aim to check  

$$
\mathbb{E}\int_0^T \Bigg|\int_r^T \varphi^{(1)}_Y(t,r)\mathcal{D}^\epsilon_{r,t}B dt\Bigg|^2dr\rightarrow \mathbb{E}\int_0^T \Bigg|\int_r^T \varphi^{(1)}_Y(t,r)\mathcal{D}_{r,t}B dt\Bigg|^2dr,
$$
as $\epsilon\downarrow 0$.

\begin{lemma}\label{L1compac}
Suppose that $Y$ satisfies the Assumption \ref{I1}. Then,  

$$
\mathbb{E}\int_0^T \Bigg\{\int_r^T \varphi^{(1)}_Y(t,r)\{\mathcal{D}^\epsilon_{r,t}B-\mathcal{D}_{r,t}B\} dt\Bigg\} Z_r dr\rightarrow 0, 
$$
for every $Z \in L^\infty_a$. 
\end{lemma}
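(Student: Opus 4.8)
The plan is to push the expectation and the two Lebesgue integrals past one another, reduce the claim to the scalar convergence $\mathbb{E}\big[Z_r\,\varphi^{(1)}_Y(t,r)(\mathcal{D}^\epsilon_{r,t}B-\mathcal{D}_{r,t}B)\big]\to 0$ for each fixed $0<r<t<T$, and then conclude by dominated convergence on the simplex $\{0<r<t<T\}$. First I would record that, since $\mathcal{D}^\epsilon_{r,t}B=\frac1\epsilon\int_0^r\Delta_\epsilon K(t,u)\,dW_u$ and $\mathcal{D}_{r,t}B=\int_0^r\frac{\partial K}{\partial t}(t,u)\,dW_u$ are Wiener integrals, It\^o's isometry together with the mean value theorem bound $\frac1\epsilon\Delta_\epsilon K(t,u)\le\frac{\partial K}{\partial t}(t,u)$ (cf.\ (\ref{b1}), valid because $t\mapsto\frac{\partial K}{\partial t}(t,u)$ is decreasing for $H<1$) gives $\|\mathcal{D}^\epsilon_{r,t}B\|_{L^2(\Omega)}^2\le\int_0^r|\frac{\partial K}{\partial t}(t,u)|^2\,du$ uniformly in $\epsilon\in(0,1)$, and the same for $\mathcal{D}_{r,t}B$. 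Hence, by Cauchy--Schwarz in $\Omega$ and the elementary estimate $\int_0^r|\frac{\partial K}{\partial t}(t,u)|^2\,du\lesssim_H (t-r)^{2H-2}$ (there is no singularity at $u=t$ since $r<t$),
\[
\big|\mathbb{E}[Z_r\varphi^{(1)}_Y(t,r)(\mathcal{D}^\epsilon_{r,t}B-\mathcal{D}_{r,t}B)]\big|\ \lesssim_H\ \|Z\|_{L^\infty}\,\|\varphi^{(1)}_Y(t,r)\|_{L^2(\Omega)}\,(t-r)^{H-1},
\]
uniformly in $\epsilon\in(0,1)$. This bound both legitimizes the Fubini--Tonelli interchange
\[
\mathbb{E}\int_0^T\Bigg\{\int_r^T\varphi^{(1)}_Y(t,r)\{\mathcal{D}^\epsilon_{r,t}B-\mathcal{D}_{r,t}B\}\,dt\Bigg\}Z_r\,dr
=\int_0^T\!\!\int_r^T\mathbb{E}\big[Z_r\varphi^{(1)}_Y(t,r)(\mathcal{D}^\epsilon_{r,t}B-\mathcal{D}_{r,t}B)\big]\,dt\,dr
\]
and simultaneously supplies the $\epsilon$-independent dominating function for the outer double integral.

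Next I would check that this majorant is integrable on $\{0<r<t<T\}$. By Cauchy--Schwarz in the variables $(r,t)$,
\[
\int_0^T\!\!\int_r^T\|\varphi^{(1)}_Y(t,r)\|_{L^2(\Omega)}(t-r)^{H-1}\,dt\,dr\le\Big(\int_0^T\!\!\int_0^t\|\varphi^{(1)}_Y(t,r)\|_{L^2(\Omega)}^2\,dr\,dt\Big)^{1/2}\Big(\int_0^T\!\!\int_0^t(t-r)^{2H-2}\,dr\,dt\Big)^{1/2};
\]
the first factor equals $\big(\int_0^T\|Y_t-\mathbb{E}[Y_t]\|_{L^2(\Omega)}^2\,dt\big)^{1/2}$ by the It\^o representation isometry, hence is finite by Assumption \ref{I1}, while the second factor is finite because $2H-2>-1$.

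For the pointwise limit, fix $0<r<t<T$. For each $u\in(0,r)$ one has $\frac1\epsilon\Delta_\epsilon K(t,u)\to\frac{\partial K}{\partial t}(t,u)$ as $\epsilon\downarrow0$, with the family dominated on $(0,r)$ by the square-integrable function $u\mapsto\frac{\partial K}{\partial t}(t,u)$; dominated convergence then gives $\|\mathcal{D}^\epsilon_{r,t}B-\mathcal{D}_{r,t}B\|_{L^2(\Omega)}^2=\int_0^r|\frac1\epsilon\Delta_\epsilon K(t,u)-\frac{\partial K}{\partial t}(t,u)|^2\,du\to0$, and since $Z_r\varphi^{(1)}_Y(t,r)\in L^2(\Omega)$ for almost every $(r,t)$, Cauchy--Schwarz yields $\mathbb{E}[Z_r\varphi^{(1)}_Y(t,r)(\mathcal{D}^\epsilon_{r,t}B-\mathcal{D}_{r,t}B)]\to0$. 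Applying the dominated convergence theorem on $\{0<r<t<T\}$ with the dominating function produced above then completes the proof.

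I do not anticipate a genuine obstacle here: the only mildly delicate points are securing the monotonicity bound $\frac1\epsilon\Delta_\epsilon K\le\frac{\partial K}{\partial t}$ uniformly in $\epsilon$ (so that a single majorant controls the whole family), and verifying that this majorant lands exactly in the integrability class governed by Assumption \ref{I1} once the Cauchy--Schwarz split is carried out.
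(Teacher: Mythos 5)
Your proof is correct and rests on exactly the same two ingredients as the paper's: the mean-value-theorem domination $0\le \frac{1}{\epsilon}\Delta_\epsilon K(t,u)\le \frac{\partial K}{\partial t}(t,u)$ combined with It\^o's isometry to get the uniform bound $\|\mathcal{D}^\epsilon_{r,t}B-\mathcal{D}_{r,t}B\|_{L^2(\Omega)}\lesssim_H (t-r)^{H-1}$ together with its pointwise decay, and the finiteness of $\int_0^T\int_0^t\|\varphi^{(1)}_Y(t,r)\|^2_{L^2(\Omega)}\,dr\,dt$ supplied by Assumption \ref{I1} via the martingale representation. The only difference is organizational: the paper applies Cauchy--Schwarz three times (in $t$, in $\Omega$, in $r$) to factor the whole expression as $\|Z\|_\infty\, A^{1/2}B_\epsilon^{1/2}$ with $A<\infty$ fixed and $B_\epsilon=\int_0^T\int_r^T\mathbb{E}|\mathcal{D}^\epsilon_{r,t}B-\mathcal{D}_{r,t}B|^2\,dt\,dr\to 0$, whereas you interchange expectation and the double integral first and run dominated convergence pointwise on the simplex with the majorant $\|Z\|_\infty\|\varphi^{(1)}_Y(t,r)\|_{L^2(\Omega)}(t-r)^{H-1}$; both routes ultimately need $2(H-1)>-1$, i.e.\ $H>\frac12$, and your Fubini step is legitimate under the standing measurability Assumption \ref{I0} once your majorant is shown integrable.
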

\begin{proof}
  Consider an arbitrary $Z \in L^\infty_a$. Cauchy-Schwarz's inequality and Assumption
  \ref{I1} yield 

$$\Bigg|\mathbb{E}\int_0^T \Bigg\{\int_r^T \varphi^{(1)}_Y(t,r)\{\mathcal{D}^\epsilon_{r,t}B-\mathcal{D}_{r,t}B\} dt\Bigg\} Z_r dr\Bigg|$$ 
$$= \Bigg|\mathbb{E}\int_0^T \Bigg\{\int_r^T \varphi^{(1)}_Y(t,r)Z_r\{\mathcal{D}^\epsilon_{r,t}B-\mathcal{D}_{r,t}B\} dt\Bigg\} dr\Bigg| $$
$$\le\mathbb{E}\int_0^T \Bigg(\int_r^T |\varphi^{(1)}_Y(t,r)Z_r|^2dt\Bigg)^{\frac{1}{2}} \Bigg(\int_r^T|\mathcal{D}^\epsilon_{r,t}B-\mathcal{D}_{r,t}B|^2 dt\Bigg)^{\frac{1}{2}} dr $$
$$\le \| Z\|_\infty \int_0^T \Bigg(\mathbb{E}\int_r^T |\varphi^{(1)}_Y(t,r)|^2dt\Bigg)^{\frac{1}{2}} \Bigg(\mathbb{E}\int_r^T|\mathcal{D}^\epsilon_{r,t}B-\mathcal{D}_{r,t}B|^2 dt\Bigg)^{\frac{1}{2}} dr$$
$$\le  \| Z\|_\infty \Bigg\{ \int_0^T \mathbb{E}\int_r^T |\varphi^{(1)}_Y(t,r)|^2dtdr\Bigg\}^{\frac{1}{2}} \Bigg\{\int_0^T \mathbb{E}\int_r^T|\mathcal{D}^\epsilon_{r,t}B-\mathcal{D}_{r,t}B|^2 dt dr\Bigg\}^{\frac{1}{2}}.$$
Observe that Assumption \ref{I1}, It\^o's isometry and Fubini's theorem imply 
$$\int_0^T \mathbb{E}\int_r^T |\varphi^{(1)}_Y(t,r)|^2dtdr < \infty.$$ 
We claim that  

\begin{equation}\label{cl1}
\lim_{\epsilon\downarrow 0}\int_0^T\int_r^T\mathbb{E}|\mathcal{D}^\epsilon_{r,t}B-\mathcal{D}_{r,t}B|^q dtdr=0, 
\end{equation}
for $1\le q < \frac{1}{1-H}$. Burkholder-Davis-Gundy's inequality yields

$$ \mathbb{E}|\mathcal{D}^\epsilon_{r,t}B - \mathcal{D}_{r,t}B|^{q}\lesssim_{q,T}\Bigg(\int_0^r \Big| \frac{\partial K}{\partial t}(t,u) - \frac{1}{\epsilon} \Delta_\epsilon K(t,u)  \Big|^2du\Bigg)^{\frac{q}{2}},$$
for $ 0 < r < t < T$. Observe that 

$$\frac{\partial K}{\partial t}(t,u) = c_H (t-u)^{H-\frac{3}{2}}$$
and a simple application of mean value theorem yields 

$$\frac{\partial K}{\partial t}(t,u) - \frac{1}{\epsilon} \Delta_\epsilon K(t,u) = c_H \Big\{(t-u)^{H-\frac{3}{2}} - (\bar{t}_\epsilon -u)^{H-\frac{3}{2}}\Big\},$$
for $u < r < t < \bar{t}_\epsilon < t+\epsilon$ and a constant $c_H$ which depends on $H$. Then, 
$$\Big|\frac{\partial K}{\partial t}(t,u) - \frac{1}{\epsilon} \Delta_\epsilon K(t,u)\Big|^2 \rightarrow 0$$
as $\epsilon \downarrow 0$, for each $0 < u < r < t < T$. We now observe that 

\begin{equation}\label{Depsilonineq1}
\Big|\frac{\partial K}{\partial t}(t,u) - \frac{1}{\epsilon} \Delta_\epsilon K(t,u)\Big|^2\lesssim_H (t-u)^{2H-3}
\end{equation}
for every $\epsilon >0$ and for every $0 < u < r < t < T$. Moreover,

\begin{eqnarray}\label{Depsilonineq2}
\nonumber\int_0^r (t-u)^{2H-3}du &=& \frac{1}{2-2H}\big\{ (t-r)^{2H-2} - t^{2H-2}   \big\}\\
&\le& \frac{1}{2-2H}(t-r)^{2H-2} < \infty,
\end{eqnarray}
for every $0 < r < t < T$. Bounded convergence theorem yields 

$$\mathbb{E}|\mathcal{D}^\epsilon_{r,t}B - \mathcal{D}_{r,t}B|^{q}\lesssim\Bigg(\int_0^r \Big| \frac{\partial K}{\partial t}(t,u) - \frac{1}{\epsilon} \Delta_\epsilon K(t,u)  \Big|^2du\Bigg)^{\frac{q}{2}}\rightarrow 0,$$
for every $r < t < T$ as $\epsilon\downarrow 0$. By using (\ref{Depsilonineq1}) and (\ref{Depsilonineq2}), we can estimate

\begin{equation}\label{Depsilonineq3}
\Bigg(\int_0^r \Big| \frac{\partial K}{\partial t}(t,u) - \frac{1}{\epsilon} \Delta_\epsilon K(t,u)  \Big|^2du\Bigg)^{\frac{q}{2}}\lesssim_H (t-r)^{q (H-1)}
\end{equation}
for every $0 < r < t < T$. If $1\le q < \frac{1}{1-H}$, the right-hand side of (\ref{Depsilonineq3}) is integrable over the set $\{(r,t); 0 < r < t < T\}$. By applying bounded convergence theorem, we conclude (\ref{cl1}).

\end{proof}

\begin{lemma}\label{bofder}
Under (\ref{modHtensor}) and Assumption \ref{I3}, we have 
$$
\sup_{\epsilon \in (0,1)}\mathbb{E}\int_0^T \Bigg|\int_r^T \varphi^{(1)}_Y(t,r)\mathcal{D}^\epsilon_{r,t}B dt\Bigg|^2dr<\infty. 
$$
\end{lemma}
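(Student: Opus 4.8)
The plan is to expand the square, substitute It\^o's product formula for $r\mapsto \mathcal{D}^\epsilon_{r,x_1}B\,\mathcal{D}^\epsilon_{r,x_2}B$, and estimate the three resulting pieces: the two ``martingale'' pieces via Assumption \ref{I3} and the deterministic bracket piece via \eqref{modHtensor}. Fixing $\epsilon\in(0,1)$ and writing, pointwise in $\omega$,
$$\Big(\int_r^T \varphi^{(1)}_Y(t,r)\,\mathcal{D}^\epsilon_{r,t}B\,dt\Big)^2 = \int_{[r,T]^2}\varphi^{(1)}_Y(x_1,r)\varphi^{(1)}_Y(x_2,r)\,\mathcal{D}^\epsilon_{r,x_1}B\,\mathcal{D}^\epsilon_{r,x_2}B\,dx_1dx_2,$$
I would apply It\^o's formula to the continuous $L^2$-martingales $r\mapsto \mathcal{D}^\epsilon_{r,x_i}B = \int_0^r \frac{1}{\epsilon}\Delta_\epsilon K(x_i,u)\,dW_u$ to obtain, for $r<x_1\wedge x_2$ with $x_1\neq x_2$,
$$\mathcal{D}^\epsilon_{r,x_1}B\,\mathcal{D}^\epsilon_{r,x_2}B = \mathbf{D}^\epsilon_{x_1,x_2;r}B + \mathbf{D}^\epsilon_{x_2,x_1;r}B + b^\epsilon_{x_1,x_2;r}, \qquad b^\epsilon_{x_1,x_2;r}:=\int_0^r \frac{1}{\epsilon}\Delta_\epsilon K(x_1,s)\,\frac{1}{\epsilon}\Delta_\epsilon K(x_2,s)\,ds\ge 0.$$
(A routine Cauchy--Schwarz bound using $\|\mathcal{D}^\epsilon_{r,x}B\|_{L^2(\Omega)}\lesssim_{H}(x-r)^{H-1}$ and Assumption \ref{I1} ensures that $t\mapsto\varphi^{(1)}_Y(t,r)\mathcal{D}^\epsilon_{r,t}B$ is a.s.\ integrable on $(r,T)$ for a.e.\ $r$, so all the displays above are meaningful.)

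Since the quantity to be bounded is a square, hence nonnegative, I may dominate it by the absolute values of the three pieces; using $b^\epsilon_{x_1,x_2;r}\ge 0$,
$$\mathbb{E}\int_0^T\Big|\int_r^T \varphi^{(1)}_Y(t,r)\,\mathcal{D}^\epsilon_{r,t}B\,dt\Big|^2dr \le \mathbb{E}\int_0^T\int_{[r,T]^2}|\varphi^{(1)}_Y(x_1,r)\varphi^{(1)}_Y(x_2,r)|\big(|\mathbf{D}^\epsilon_{x_1,x_2;r}B|+|\mathbf{D}^\epsilon_{x_2,x_1;r}B|+b^\epsilon_{x_1,x_2;r}\big)\,dx_1dx_2\,dr.$$
The integrand on the right is nonnegative and, in a suitable jointly measurable version of $\mathbf{D}^\epsilon$, jointly measurable in $(\omega,r,x_1,x_2)$, so Tonelli's theorem lets me rewrite the right-hand side as $\int_{[0,T]^2}\big(\mathbb{E}\int_0^{x_1\wedge x_2}(\cdots)\,dr\big)\,dx_1dx_2$ over $\{0<r<x_1\wedge x_2\le T\}$ (the diagonal $x_1=x_2$ being Lebesgue-null, so that $\mathbf{D}^\epsilon$ is defined a.e.). It then suffices to bound each of the three resulting terms uniformly in $\epsilon\in(0,1)$.

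For the two $\mathbf{D}^\epsilon$-terms I apply, at fixed $(r,x_1,x_2)$, H\"older's inequality with the conjugate exponents $(p,q)$ of Assumption \ref{I3}, getting $\mathbb{E}[|\varphi^{(1)}_Y(x_1,r)\varphi^{(1)}_Y(x_2,r)|\,|\mathbf{D}^\epsilon_{x_1,x_2;r}B|]\le \|\varphi^{(1)}_Y(x_1,r)\varphi^{(1)}_Y(x_2,r)\|_{L^p(\Omega)}\sup_{\epsilon\in(0,1)}\|\mathbf{D}^\epsilon_{x_1,x_2;r}B\|_{L^q(\Omega)}$; integrating over $\{r<x_1\wedge x_2\}$ gives exactly the finite quantity in Assumption \ref{I3}, and the $\mathbf{D}^\epsilon_{x_2,x_1;r}B$-term is identical after relabelling $x_1\leftrightarrow x_2$ (using symmetry of $\|\varphi^{(1)}_Y(x_1,r)\varphi^{(1)}_Y(x_2,r)\|_{L^p(\Omega)}$ and of the domain). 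For the bracket term, the mean value theorem together with the fact that $t\mapsto\frac{\partial K}{\partial t}(t,s)$ is decreasing (since $H-\frac32<0$) gives $\frac{1}{\epsilon}\Delta_\epsilon K(x_i,s)\le \frac{\partial K}{\partial x_i}(x_i,s)$ for $s<x_i$ and every $\epsilon>0$ (cf.\ \eqref{est3}), whence $b^\epsilon_{x_1,x_2;r}\le \frac{\partial^2 R^{(r)}}{\partial x_1\partial x_2}(x_1,x_2)\le \frac{\partial^2 R}{\partial x_1\partial x_2}(x_1,x_2)$, uniformly in $r$ and $\epsilon$. Then Cauchy--Schwarz (first in $\Omega$, then in $r$) together with It\^o's isometry for the martingale representation of $Y_{x_i}$ gives
$$\mathbb{E}\int_0^{x_1\wedge x_2}|\varphi^{(1)}_Y(x_1,r)\varphi^{(1)}_Y(x_2,r)|\,dr \le \|\bar Y_{x_1}\|_{L^2(\Omega)}\|\bar Y_{x_2}\|_{L^2(\Omega)}\le \|Y_{x_1}\|_{L^2(\Omega)}\|Y_{x_2}\|_{L^2(\Omega)},$$
so the bracket term is bounded by the left-hand side of \eqref{modHtensor}, which is finite. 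Summing the three uniform bounds yields the claim.

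The analytic steps are light; the main obstacle is organisational. One must verify that the \emph{signed} identity produced by It\^o's formula is legitimately majorised by its absolute-value expansion --- this is precisely where nonnegativity of the target (a square) is used --- and that Tonelli's theorem applies, i.e.\ that there is a jointly measurable version of $(\omega,r,x_1,x_2)\mapsto\mathbf{D}^\epsilon_{x_1,x_2;r}B$ and that the dominating integrand is exactly what Assumptions \ref{I1} and \ref{I3} are tailored to make integrable. Once the three-way split is in place, all the bounds above are visibly uniform in $\epsilon\in(0,1)$, which is what is needed --- together with Lemma \ref{L1compac} --- to extract weak $L^2$-convergence in the proof of Proposition \ref{firstISO}.
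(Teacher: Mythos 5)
Your proposal is correct and follows essentially the same route as the paper: Fubini/Tonelli to write the squared integral as a double integral over $(x_1,x_2)$, the It\^o product formula to split $\mathcal{D}^\epsilon_{r,x_1}B\,\mathcal{D}^\epsilon_{r,x_2}B$ into the bracket term plus the two martingale terms $\mathbf{D}^\epsilon_{x_1,x_2;r}B$ and $\mathbf{D}^\epsilon_{x_2,x_1;r}B$, a mean-value-theorem bound and Cauchy--Schwarz with It\^o's isometry to control the bracket term by \eqref{modHtensor}, and H\"older with the conjugate exponents of Assumption \ref{I3} for the martingale terms. The only cosmetic difference is that the paper phrases the bracket estimate via $\|\bar Y_{x_1}\|_{L^2}\|\mathbb{E}^{x_1}[\bar Y_{x_2}]\|_{L^2}$ and closes the martingale terms by citing Lemma \ref{Depsilonbounds}, but the substance is identical.
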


\begin{proof}
Fix $\epsilon \in (0,1)$. By applying Fubini's theorem, we rewrite

\begin{eqnarray*}
\mathbb{E}\int_0^T \Bigg|\int_r^T \varphi^{(1)}_Y(t,r)\mathcal{D}^\epsilon_{r,t}B dt\Bigg|^2dr&=&\mathbb{E}\int_0^T \int_0^T \int_0^{x_1\wedge x_2}\varphi^{(1)}_Y(x_1,r)\varphi^{(1)}_Y(x_2,r)\\
&\times& \mathcal{D}^\epsilon_{r,x_1}B \mathcal{D}^\epsilon_{r,x_2}B drdx_1dx_2.
\end{eqnarray*}

Integration by parts yields 

\begin{eqnarray*}
\mathcal{D}^\epsilon_{r,x_1}B \mathcal{D}^\epsilon_{r,x_2}B&=& \int_0^r \frac{1}{\epsilon}\Delta_\epsilon K(x_1,s) \frac{1}{\epsilon}\Delta_\epsilon K(x_2,s) ds + \int_0^r \big(\mathcal{D}^\epsilon_{s,x_1}B\big) \frac{1}{\epsilon}\Delta_\epsilon K(x_2,s)dW_s\\
&+&  \int_0^r \big(\mathcal{D}^\epsilon_{s,x_2}B \big)\frac{1}{\epsilon}\Delta_\epsilon K(x_1,s)dW_s\\
&=:& A_1(\epsilon; r,x_1,x_2) + A_2(\epsilon; r,x_1,x_2) + A_3(\epsilon; r,x_1,x_2), 
\end{eqnarray*}
for $r < x_1 \wedge x_2$. Then,

$$\mathbb{E}\int_0^T \Bigg|\int_r^T \varphi^{(1)}_Y(t,r)\mathcal{D}^\epsilon_{r,t}B dt\Bigg|^2dr=$$
$$ \sum_{m=1}^3 \mathbb{E}\int_0^T \int_0^T \int_0^{x_1\wedge x_2}\varphi^{(1)}_Y(x_1,r)\varphi^{(1)}_Y(x_2,r) A_m(\epsilon; r,x_1,x_2) drdx_1dx_2.$$

We start by treating $A_1$. A direct application of Mean Value Theorem along the intervals $(x_1, x_1+\epsilon)$ and $(x_2,x_2+\epsilon)$ yields

$$0 < \int_0^r \frac{1}{\epsilon}\Delta_\epsilon K(x_1,s) \frac{1}{\epsilon}\Delta_\epsilon K(x_2,s) ds\le \int_0^{x_1\wedge x_2} \frac{\partial K}{\partial t}(x_1,s) \frac{\partial K}{\partial t}(x_2,s)ds = \frac{\partial^2 R}{\partial x_1\partial x_2}(x_1,x_2) $$ 
for every $s < x_1 \wedge x_2$ and $\epsilon \in (0,1)$. Then

$$\int_0^T \int_0^T \int_0^{x_1\wedge x_2}|\mathbb{E}\varphi^{(1)}_Y(x_1,r)\varphi^{(1)}_Y(x_2,r) A_1(\epsilon;r,x_1,x_2)| drdx_1dx_2$$
$$\le \int_0^T \int_0^T \int_0^{x_1\wedge x_2}\Big|\mathbb{E}\big[\varphi^{(1)}_Y(x_1,r)\varphi^{(1)}_Y(x_2,r)\big] \Big| \frac{\partial^2 R}{\partial x_1\partial x_2}(x_1,x_2) drdx_1dx_2$$
$$ = 2\int_0^T \int_0^{x_2} \int_0^{x_1}\Big|\mathbb{E}\big[\varphi^{(1)}_Y(x_1,r)\varphi^{(1)}_Y(x_2,r)\big] \Big| \frac{\partial^2 R}{\partial x_1\partial x_2}(x_1,x_2)drdx_1dx_2.$$
At this point, we make use Cauchy-Schwarz's inequality twice so that

$$
\Big|\mathbb{E}\big[\varphi^{(1)}_Y(x_1,r)\varphi^{(1)}_Y(x_2,r)\big] \Big|\le \| \varphi^{(1)}_Y(x_1,r)\|_{L^2(\Omega)} \| \varphi^{(1)}_Y(x_2,r)\|_{L^2(\Omega)} 
$$
and 
$$\int_0^T \int_0^{x_2} \int_0^{x_1}\Big|\mathbb{E}\big[\varphi^{(1)}_Y(x_1,r)\varphi^{(1)}_Y(x_2,r)\big] \Big| \frac{\partial^2 R}{\partial x_1\partial x_2}(x_1,x_2)drdx_1dx_2$$
$$\le \int_0^T \int_0^{x_2} \Bigg(\int_0^{x_1}\| \varphi^{(1)}_Y(x_1,r)\|^2_{L^2(\Omega)}dr\Bigg)^{\frac{1}{2}} \Big(\int_0^{x_1}\| \varphi^{(1)}_Y(x_2,r)\|^2_{L^2(\Omega)}dr\Bigg)^{\frac{1}{2}} \frac{\partial^2 R}{\partial x_1\partial x_2}(x_1,x_2)dx_1dx_2.$$
By Burkholder-Davis-Gundy inequality, we know that 
$$
\mathbb{E}|\bar{Y}_{x_1}|^p \sim_p \mathbb{E}\Big( \int_0^{x_1} |\varphi^{(1)}_Y(x_1,s)|^2ds\Big)^{\frac{p}{2}},\quad \mathbb{E}|\mathbb{E}^{x_1}[\bar{Y}_{x_2}]|^p \sim_p \mathbb{E}\Big( \int_0^{x_1} |\varphi^{(1)}_Y(x_2,s)|^2ds\Big)^{\frac{p}{2}},
$$
for $x_1 < x_2$ and $1 < p  < \infty$.  Hence (see Remark \ref{modHtrem}), 

$$\Bigg|\mathbb{E}\int_0^T \int_0^T \int_0^{x_1\wedge x_2}\varphi^{(1)}_Y(x_1,r)\varphi^{(1)}_Y(x_2,r) A_1(\epsilon;r,x_1,x_2) drdx_1dx_2\Bigg|$$

$$\le 2 \int_0^T \int_0^{x_2} \big\| \bar{Y}_{x_1}\big\|_{L^2(\Omega)} \big\| \mathbb{E}^{x_1}[\bar{Y}_{x_2}] \big\|_{L^2(\Omega)} \frac{\partial^2 R}{\partial x_1\partial x_2}(x_1,x_2)dx_1dx_2 $$
$$\le 8 \int_0^T \int_0^T \big\| Y_{x_1}\big\|_{L^2(\Omega)} \big\| Y_{x_2} \big\|_{L^2(\Omega)} \frac{\partial^2 R}{\partial x_1\partial x_2}(x_1,x_2)dx_1dx_2.$$

Observe that $A_2(\epsilon;r,x_1,x_2)=\mathbf{D}^\epsilon_{x_1,x_2;r}B$ and $A_2(\epsilon;r,x_2,x_1) = A_3(\epsilon;r,x_1,x_2) = \mathbf{D}^\epsilon_{x_2,x_1;r}B$. By symmetry, it is enough to treat $A_2$. By applying Lemma \ref{Depsilonbounds}, we conclude the proof. 

\end{proof}
%%% You use the argument weak convergence + convergence of the second moment
%%imply strong convergence?
%%%END OF VERIFICATION FRANCESCO

By following the same arguments given in the proof of Lemma \ref{bofder}, the following estimate hold true. 

\begin{lemma}\label{L2norm1}
If (\ref{modHtensor}) and Assumption \ref{I3} are fulfilled with conjugate exponents $1< p \le \infty$ and $1\le q < \infty$, then  
\begin{small}
$$
\mathbb{E}\int_0^T \Bigg|\int_r^T \varphi^{(1)}_Y(t,r)\mathcal{D}_{r,t}B dt\Bigg|^2dr\le  
\int_0^T\int_0^T \big\| \bar{Y}_{x_1}\big\|_{L^2(\Omega)} \big\| \mathbb{E}^{x_1}[\bar{Y}_{x_2}] \big\|_{L^2(\Omega)} \frac{\partial^2 R}{\partial x_1 \partial x_2}(x_1,x_2)dx_1dx_2
$$
$$
+\int_{0}^T \int_0^T \int_0^{x_1\wedge x_2} \|\varphi^{(1)}_Y(x_1,r)\varphi^{(1)}_Y(x_2,r)\|_{L^p(\Omega)} \sup_{\epsilon \in (0,1)}\big\|\mathbf{D}^\epsilon_{x_1,x_2;r}B\big\|_{L^q(\Omega)}drdx_1dx_2
$$

$$ + \int_{0}^T \int_0^T \int_0^{x_1\wedge x_2} \|\varphi^{(1)}_Y(x_1,r)\varphi^{(1)}_Y(x_2,r)\|_{L^p(\Omega)}\sup_{\epsilon \in (0,1)}\big\|\mathbf{D}^\epsilon_{x_2,x_1;r}B\big\|_{L^q(\Omega)} drdx_1dx_2<\infty. 
$$
\end{small}
\end{lemma}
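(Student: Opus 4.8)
The plan is to follow the blueprint laid out in the proof of Lemma \ref{bofder}, simply replacing the approximate kernel $\frac{1}{\epsilon}\Delta_\epsilon K$ by the pointwise limit $\frac{\partial K}{\partial t}$ and the approximate Nelson derivatives by their $L^2(\Omega)$-limits. Concretely, I would first apply Fubini's theorem to write
\begin{equation*}
\mathbb{E}\int_0^T \Bigg|\int_r^T \varphi^{(1)}_Y(t,r)\mathcal{D}_{r,t}B \, dt\Bigg|^2dr = \mathbb{E}\int_{[0,T]^2}\int_0^{x_1\wedge x_2}\varphi^{(1)}_Y(x_1,r)\varphi^{(1)}_Y(x_2,r)\,\mathcal{D}_{r,x_1}B\,\mathcal{D}_{r,x_2}B \, dr\,dx_1dx_2,
\end{equation*}
using the representation $\mathcal{D}_{r,t}B = \int_0^r \frac{\partial K}{\partial t}(t,u)dW_u$ recalled in the text. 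Then, by the integration-by-parts (It\^o product) formula,
\begin{equation*}
\mathcal{D}_{r,x_1}B\,\mathcal{D}_{r,x_2}B = \int_0^r \frac{\partial K}{\partial x_1}(x_1,s)\frac{\partial K}{\partial x_2}(x_2,s)\,ds + \mathbf{D}_{x_1,x_2;r}B + \mathbf{D}_{x_2,x_1;r}B,
\end{equation*}
which splits the integral into the three summands appearing in Proposition \ref{firstISO} and on the right-hand side of the asserted inequality. The first term, after inserting the definition of $\frac{\partial^2 R^{(r)}}{\partial x_1\partial x_2}$ and performing the $r$-integration, is bounded exactly as in Lemma \ref{bofder}: two applications of Cauchy--Schwarz in $L^2(\Omega)$ together with the Burkholder--Davis--Gundy equivalences $\|\bar{Y}_{x_1}\|_{L^2(\Omega)}\simeq \|\varphi^{(1)}_Y(x_1,\cdot)\|_{L^2(\Omega\times[0,x_1])}$ and $\|\mathbb{E}^{x_1}[\bar{Y}_{x_2}]\|_{L^2(\Omega)}\simeq \|\varphi^{(1)}_Y(x_2,\cdot)\|_{L^2(\Omega\times[0,x_1])}$ produce the bound $\int_0^T\int_0^T \|\bar{Y}_{x_1}\|_{L^2(\Omega)}\|\mathbb{E}^{x_1}[\bar{Y}_{x_2}]\|_{L^2(\Omega)}\frac{\partial^2 R}{\partial x_1\partial x_2}(x_1,x_2)\,dx_1dx_2$, finite by \eqref{modHtensor}.

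For the two mixed terms involving $\mathbf{D}_{x_1,x_2;r}B$ and $\mathbf{D}_{x_2,x_1;r}B$, I would take the modulus inside, apply Fubini, and then use H\"older's inequality on $\Omega$ with the conjugate pair $(p,q)$ from Assumption \ref{I3}:
\begin{equation*}
\mathbb{E}\big|\varphi^{(1)}_Y(x_1,r)\varphi^{(1)}_Y(x_2,r)\,\mathbf{D}_{x_1,x_2;r}B\big| \le \big\|\varphi^{(1)}_Y(x_1,r)\varphi^{(1)}_Y(x_2,r)\big\|_{L^p(\Omega)}\,\big\|\mathbf{D}_{x_1,x_2;r}B\big\|_{L^q(\Omega)}.
\end{equation*}
The point is that by Remark \ref{supq2} (the case $q=2$) and more generally by Fatou/lower semicontinuity of the $L^q(\Omega)$-norm under the $L^2(\Omega)$-convergence $\mathbf{D}^\epsilon_{x_1,x_2;r}B\to\mathbf{D}_{x_1,x_2;r}B$, one has $\|\mathbf{D}_{x_1,x_2;r}B\|_{L^q(\Omega)}\le \sup_{\epsilon\in(0,1)}\|\mathbf{D}^\epsilon_{x_1,x_2;r}B\|_{L^q(\Omega)}$, so the resulting triple integral is dominated by the corresponding term in Assumption \ref{I3}, hence finite; the symmetric term is handled identically after exchanging $x_1\leftrightarrow x_2$. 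Summing the three bounds gives the claimed inequality.

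The step requiring the most care is the passage from the approximate objects $\mathbf{D}^\epsilon$ to the limit $\mathbf{D}$ inside the $L^q(\Omega)$-norm: one must justify that $\mathbf{D}_{x_1,x_2;r}B$ is indeed the $L^2(\Omega)$-limit (this follows from It\^o isometry together with the dominated-convergence argument for $\frac{1}{\epsilon}\Delta_\epsilon K(x_2,s)\to\frac{\partial K}{\partial t}(x_2,s)$ already used in the proof of Lemma \ref{L1compac}, combined with the uniform bound on $\mathcal{D}^\epsilon_{s,x_1}B$), and then upgrade $L^2(\Omega)$-convergence along a subsequence to the lower-semicontinuity estimate for the $L^q(\Omega)$-norm via Fatou. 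Everything else is a routine repetition of the estimates in Lemma \ref{bofder}; the only genuinely new ingredient is recognizing that the three-term decomposition of $\mathcal{D}_{r,x_1}B\,\mathcal{D}_{r,x_2}B$ is the exact limit of the decomposition $A_1+A_2+A_3$ used there, so that each piece inherits its bound in the limit $\epsilon\downarrow 0$.
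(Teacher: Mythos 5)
Your proposal is correct and is precisely the route the paper intends: the paper gives no separate proof of Lemma \ref{L2norm1}, stating only that it follows ``by the same arguments given in the proof of Lemma \ref{bofder}'', and your write-up carries out exactly that decomposition (Fubini, the It\^o-product splitting of $\mathcal{D}_{r,x_1}B\,\mathcal{D}_{r,x_2}B$ into the covariance term plus $\mathbf{D}_{x_1,x_2;r}B+\mathbf{D}_{x_2,x_1;r}B$, Cauchy--Schwarz/BDG for the first term, H\"older with $(p,q)$ for the mixed terms) together with the Fatou argument needed to dominate $\|\mathbf{D}_{x_1,x_2;r}B\|_{L^q(\Omega)}$ by $\sup_{\epsilon\in(0,1)}\|\mathbf{D}^\epsilon_{x_1,x_2;r}B\|_{L^q(\Omega)}$. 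One small citation quibble: Remark \ref{supq2} actually states the \emph{reverse} inequality ($\sup_\epsilon\|\mathbf{D}^\epsilon\|_{L^2}\le\|\mathbf{D}\|_{L^2}$), so it does not give you the direction you need for $q=2$, but your Fatou/lower-semicontinuity argument along an a.s.\ convergent subsequence covers all $1\le q<\infty$ and makes this immaterial.
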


We are now in position to prove Proposition \ref{firstISO}.  

\

\noindent \textbf{Proof of Proposition}~\ref{firstISO}: Lemmas \ref{L1compac}, \ref{bofder} and \ref{L2norm1} yield 

$$\int_\cdot^{T} \varphi^{(1)}_Y(t,\cdot) \mathcal{D}^\epsilon_{\cdot, t}Bdt\rightarrow \int_\cdot^{T} \varphi^{(1)}_Y(t,\cdot)\mathcal{D}_{\cdot, t}Bdt$$
converges weakly in $L^2_a(\Omega\times [0,T])$. We claim  

$$
\mathbb{E}\int_0^T \Bigg|\int_r^T \varphi^{(1)}_Y(t,r)\mathcal{D}^\epsilon_{r,t}B dt\Bigg|^2dr\rightarrow \mathbb{E}\int_0^T \Bigg|\int_r^T \varphi^{(1)}_Y(t,r)\mathcal{D}_{r,t}B dt\Bigg|^2dr,
$$
as $\epsilon\downarrow 0$. From the proof of Lemma \ref{bofder}, we recall we can write

\begin{eqnarray*}
\mathbb{E}\int_0^T \Bigg|\int_r^T \varphi^{(1)}_Y(t,r)\mathcal{D}^\epsilon_{r,t}B dt\Bigg|^2dr&=&\mathbb{E}\int_0^T \int_0^T \int_0^{x_1\wedge x_2}\varphi^{(1)}_Y(x_1,r)\varphi^{(1)}_Y(x_2,r)\\
&\times& \mathcal{D}^\epsilon_{r,x_1}B \mathcal{D}^\epsilon_{r,x_2}B drdx_1dx_2. 
\end{eqnarray*}
%Clearly, for each fixed $r < x_1\wedge x_2$, we have 

%$$\mathcal{D}^\epsilon_{r,x_1}B \mathcal{D}^\epsilon_{r,x_2}B - \mathcal{D}_{r,x_1}B \mathcal{D}_{r,x_2}B$$
%$$ = \mathcal{D}^\epsilon_{r,x_1}B \mathcal{D}^\epsilon_{r,x_2}B - \mathcal{D}^\epsilon_{r,x_1}B \mathcal{D}_{r,x_2}B $$
%$$+ \mathcal{D}^\epsilon_{r,x_1}B \mathcal{D}_{r,x_2}B - \mathcal{D}_{r,x_1}B \mathcal{D}_{r,x_2}B$$
For each $r < x_1\wedge x_2$ and $1\le q < \infty$, triangle inequality yields 

$$\|\mathcal{D}^\epsilon_{r,x_1}B \mathcal{D}^\epsilon_{r,x_2}B - \mathcal{D}_{r,x_1}B \mathcal{D}_{r,x_2}B\|_{L^q(\Omega)}$$
$$\le \|\mathcal{D}^\epsilon_{r,x_1}B [ \mathcal{D}^\epsilon_{r,x_2}B - \mathcal{D}_{r,x_2}B]\|_{L^q(\Omega)} $$
$$+ \|\mathcal{D}_{r,x_2}B [\mathcal{D}^\epsilon_{r,x_1}B - \mathcal{D}_{r,x_1}B]\|_{L^q(\Omega)}.$$

Burkholder-Davis-Gundy's inequality yields

$$ \mathbb{E}|\mathcal{D}^\epsilon_{r,y}B - \mathcal{D}_{r,y}B|^{\alpha}\lesssim_{\alpha,T}\Bigg(\int_0^r \Big| \frac{\partial K}{\partial t}(y,u) - \frac{1}{\epsilon} \Delta_\epsilon K(y,u)  \Big|^2du\Bigg)^{\frac{\alpha}{2}}\rightarrow 0,$$
as $\epsilon\downarrow 0$ for any $1 < \alpha < \infty$, for each $y=x_1$ or $y=x_2$ and $r < x_1\wedge x_2$. This shows that 

$$\|\mathcal{D}^\epsilon_{r,x_1}B \mathcal{D}^\epsilon_{r,x_2}B - \mathcal{D}_{r,x_1}B \mathcal{D}_{r,x_2}B\|_{L^q(\Omega)}\rightarrow 0,$$
for each $r < x_1\wedge x_2$ and $1\le q < \infty$. Then, we choose the conjugate exponents $1 < p \le \infty, 1\le q < \infty$ from Assumption \ref{I3} to infer the pointwise convergence

$$\Big|\mathbb{E}\Big[ \varphi^{(1)}_Y(x_1,r)\varphi^{(1)}_Y(x_2,r)\{\mathcal{D}^\epsilon_{r,x_1}B \mathcal{D}^\epsilon_{r,x_2}B  - \mathcal{D}_{r,x_1}B \mathcal{D}_{r,x_2}B\}\Big]\Big|$$
$$\le \big\|\varphi^{(1)}_Y(x_1,r)\varphi^{(1)}_Y(x_2,r)\|_{L^p(\Omega)} \big\|\mathcal{D}^\epsilon_{r,x_1}B \mathcal{D}^\epsilon_{r,x_2}B - \mathcal{D}_{r,x_1}B \mathcal{D}_{r,x_2}B\big\|_{L^q(\Omega)}\rightarrow 0,$$
as $\epsilon \downarrow 0$, for each $r < x_1 \wedge x_2$. From the proof of Lemma \ref{bofder}, we know there exists $f \in L^1(G)$ such that

$$\Big|\mathbb{E}\big[ \varphi^{(1)}_Y(x_1,r)\varphi^{(1)}_Y(x_2,r)\mathcal{D}^\epsilon_{r,x_1}B \mathcal{D}^\epsilon_{r,x_2}B\big]\Big|\lesssim_{q,p,H,T}f (r,x_1,x_2)$$
for every $\epsilon \in (0,1)$ and for each $(r,x_1,x_2) \in G=\{(a,b,c); a < b\wedge c\}$. The function $f$ is given by 

$$f(r,x_1,x_2) = \big\| \bar{Y}_{x_1}\big\|_{L^2(\Omega)} \big\| \mathbb{E}^{x_1}[\bar{Y}_{x_2}] \big\|_{L^2(\Omega)} \frac{\partial^2 R}{\partial x_1\partial x_2}(x_1,x_2)$$
$$+ \|\varphi^{(1)}_Y(x_1,r)\varphi^{(1)}_Y(x_2,r)\|_{L^p(\Omega)} \sup_{\epsilon \in (0,1)}\big\|\mathbf{D}^\epsilon_{x_1,x_2;r}B\big\|_{L^q(\Omega)}$$
$$+  \|\varphi^{(1)}_Y(x_1,r)\varphi^{(1)}_Y(x_2,r)\|_{L^p(\Omega)} \sup_{\epsilon \in (0,1)}\big\|\mathbf{D}^\epsilon_{x_2,x_1;r}B\big\|_{L^q(\Omega)},$$
for $r < x_1\wedge x_2$. Bounded convergence theorem yields

$$\lim_{\epsilon\downarrow 0}\mathbb{E}\int_0^T \Bigg|\int_r^T \varphi^{(1)}_Y(t,r)\mathcal{D}^\epsilon_{r,t}B dt\Bigg|^2dr= \mathbb{E}\int_0^T \Bigg|\int_r^T \varphi^{(1)}_Y(t,r)\mathcal{D}_{r,t}B dt\Bigg|^2dr$$
$$=\mathbb{E}\int_{[0,T]^2} \int_0^{x_1\wedge x_2}\varphi^{(1)}_Y(x_1,r)\varphi^{(1)}_Y(x_2,r) \frac{\partial^2 R^{(r)}}{\partial x_1 \partial x_2}(x_1,x_2) drdx_1dx_2$$
$$+ \mathbb{E}\int_{[0,T]^2} \int_0^{x_1\wedge x_2}\varphi^{(1)}_Y(x_1,r)\varphi^{(1)}_Y(x_2,r) \mathbf{D}_{x_1,x_2,r}B drdx_1dx_2$$
$$+ \mathbb{E}\int_{[0,T]^2}\int_0^{x_1\wedge x_2}\varphi^{(1)}_Y(x_1,r)\varphi^{(1)}_Y(x_2,r) \mathbf{D}_{x_2,x_1,r}B drdx_1dx_2.$$
This concludes the proof of Proposition \ref{firstISO}.

\begin{lemma}\label{secconve}
Suppose that a square-integrable process $Y$ satisfies Assumption \ref{I4}. Then,  

$$\lim_{\epsilon\downarrow 0}\mathbb{E}\int_0^T \Bigg|\int_r^T \int_r^t \varphi^{(2)}_Y(t,v;r)\Big\{\frac{1}{\epsilon}\Delta_\epsilon K(t,v) - \frac{\partial K}{\partial t}(t,v)\Big\}
dvdt\Bigg|^2dr=0.
$$
\end{lemma}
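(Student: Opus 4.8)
The goal is to show that the $L^2(\Omega\times[0,T])$-valued quantity
\[
r\longmapsto \int_r^T\int_r^t \varphi^{(2)}_Y(t,v;r)\Big\{\tfrac{1}{\epsilon}\Delta_\epsilon K(t,v)-\tfrac{\partial K}{\partial t}(t,v)\Big\}dvdt
\]
tends to $0$ as $\epsilon\downarrow 0$. The strategy is a dominated-convergence argument carried out on the level of the square of this $r$-integral, expanded via Fubini exactly as in the proof of Lemma~\ref{bassa3}. First I would fix, for $t>v$, the deterministic kernel $\eta_\epsilon(t,v):=\tfrac{1}{\epsilon}\Delta_\epsilon K(t,v)-\tfrac{\partial K}{\partial t}(t,v)$; by the mean value theorem (exactly as in the computation leading to \eqref{est3} and \eqref{Depsilonineq1}) one has the pointwise convergence $\eta_\epsilon(t,v)\to 0$ as $\epsilon\downarrow 0$ for every $v<t$, together with the $\epsilon$-uniform domination $|\eta_\epsilon(t,v)|\lesssim_H \tfrac{\partial K}{\partial t}(t,v)$ (with the convention that $K$ is extended by $0$, so that for $v$ very close to $t$ the bound $|\eta_\epsilon(t,v)|\le \tfrac{1}{\epsilon}\int_t^{t+\epsilon}|\partial_t K(s,v)|ds + |\partial_t K(t,v)| \lesssim \partial_t K(t,v)$ still holds on the relevant region).

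Next, I would apply Fubini on the simplex $J$ of \eqref{Jset} and Cauchy--Schwarz in $\Omega$ twice, precisely as in Lemma~\ref{bassa3}, to obtain
\[
\mathbb{E}\int_0^T\Big(\int_r^T\int_r^t \varphi^{(2)}_Y(t,v;r)\,\eta_\epsilon(t,v)\,dvdt\Big)^2dr
 = \mathbb{E}\int_J\int_0^{v_1\wedge v_2}\varphi^{(2)}_Y(x_1,v_1;r)\varphi^{(2)}_Y(x_2,v_2;r)\,dr\;\nu_\epsilon(dv_1dv_2dx_1dx_2),
\]
where $\nu_\epsilon(dv_1dv_2dx_1dx_2)=\eta_\epsilon(x_1,v_1)\eta_\epsilon(x_2,v_2)\,dv_1dv_2dx_1dx_2$. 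Bounding the inner $\Omega$-integral by Cauchy--Schwarz and using the Burkholder--Davis--Gundy identity $\int_0^{v_1\wedge v_2}\|\varphi^{(2)}_Y(x_i,v_i;r)\|^2_{L^2(\Omega)}dr\simeq \|\mathbb{E}^{v_1\wedge v_2}[\bar\varphi^{(1)}_Y(x_i,v_i)]\|^2_{L^2(\Omega)}$ exactly as in Lemma~\ref{bassa3}, the integrand on $J$ is dominated, uniformly in $\epsilon\in(0,1)$, by
\[
C\,\big\|\mathbb{E}^{v_1\wedge v_2}[\bar\varphi^{(1)}_Y(x_1,v_1)]\big\|_{L^2(\Omega)}\big\|\mathbb{E}^{v_1\wedge v_2}[\bar\varphi^{(1)}_Y(x_2,v_2)]\big\|_{L^2(\Omega)}\,\tfrac{\partial K}{\partial x_1}(x_1,v_1)\tfrac{\partial K}{\partial x_2}(x_2,v_2),
\]
which is integrable over $J$ by Assumption~\ref{I4} combined with the bound of Lemma~\ref{bassa3} (equivalently, this is exactly the right-hand side controlled in \eqref{cond5}). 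Meanwhile $\eta_\epsilon(x_1,v_1)\eta_\epsilon(x_2,v_2)\to 0$ pointwise on $J$, so the full integrand on $J$ converges to $0$ pointwise.

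Finally I would invoke the dominated convergence theorem on $J$ to conclude that the whole expression tends to $0$, which is the assertion of Lemma~\ref{secconve}. The main obstacle is the boundary behaviour of $\eta_\epsilon(t,v)$ as $v\uparrow t$: here $\tfrac{1}{\epsilon}\Delta_\epsilon K(t,v)$ and $\tfrac{\partial K}{\partial t}(t,v)$ are each individually large (the derivative $\partial_t K(t,v)=c_H(t-v)^{H-3/2}$ is only locally integrable in $v$ since $H-3/2\in(-1,-1/2)$), so one must verify the $\epsilon$-uniform domination $|\eta_\epsilon(t,v)|\lesssim_H (t-v)^{H-3/2}$ carefully on the region $t-\epsilon<v<t$ using the extension-by-zero convention for $K$, rather than naively differencing. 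Once this uniform bound is in place, the rest is a routine repetition of the Fubini/Cauchy--Schwarz/BDG chain already established in Lemma~\ref{bassa3}, and the dominated convergence step closes the argument.
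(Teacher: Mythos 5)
Your overall strategy (pointwise convergence of $\eta_\epsilon=\frac{1}{\epsilon}\Delta_\epsilon K-\frac{\partial K}{\partial t}$, the $\epsilon$-uniform bound $|\eta_\epsilon(t,v)|\lesssim\frac{\partial K}{\partial t}(t,v)$ via the mean value theorem, and a dominated convergence argument) is the right one, but the justification of integrability of your dominating function contains a genuine error. After expanding the square over $J$ and applying Cauchy--Schwarz and BDG, you dominate the $J$-integrand by $C\,\|\mathbb{E}^{v_1\wedge v_2}[\bar\varphi^{(1)}_Y(x_1,v_1)]\|_{L^2(\Omega)}\|\mathbb{E}^{v_1\wedge v_2}[\bar\varphi^{(1)}_Y(x_2,v_2)]\|_{L^2(\Omega)}$ against the measure $\mu$ of (\ref{mumeasure}), and you claim this is integrable ``by Assumption \ref{I4} combined with the bound of Lemma \ref{bassa3}.'' That implication goes the wrong way: Lemma \ref{bassa3} bounds the Assumption \ref{I4} quantity \emph{by} this $\bar\varphi^{(1)}_Y$-expression; finiteness of the latter is precisely Assumption \ref{I6}, which is \emph{not} among the hypotheses of Lemma \ref{secconve}. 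So, as written, your majorant is not known to be integrable and the dominated convergence step does not close. The repair is immediate and in fact simplifies your argument: do not apply Cauchy--Schwarz/BDG at all. The natural majorant on $J$ is $\mathbb{E}\int_0^{v_1\wedge v_2}|\varphi^{(2)}_Y(x_1,v_1;r)\varphi^{(2)}_Y(x_2,v_2;r)|\,dr$ times the density of $\mu$, and by Fubini (this is exactly the first identity in the proof of Lemma \ref{bassa3}) its integral over $J$ equals $\mathbb{E}\int_0^T\big(\int_r^T\int_r^t|\varphi^{(2)}_Y(t,v;r)|\frac{\partial K}{\partial t}(t,v)\,dv\,dt\big)^2dr$, which is finite by Assumption \ref{I4} itself.

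For comparison, the paper's proof avoids expanding the square altogether: it applies dominated convergence twice, first $\omega$-wise inside the double $dv\,dt$ integral (for a.e.\ $r$, using that $\int_r^T\int_r^t|\varphi^{(2)}_Y(t,v;r)|\frac{\partial K}{\partial t}(t,v)\,dv\,dt<\infty$ a.s.\ for a.e.\ $r$ by Assumption \ref{I4}), and then in $\mathbb{P}\times dr$ with the majorant $4\big(\int_r^T\int_r^t|\varphi^{(2)}_Y(t,v;r)|\frac{\partial K}{\partial t}(t,v)\,dv\,dt\big)^2$, which is integrable directly by Assumption \ref{I4}. Also, your worry about the region $v$ close to $t$ is unnecessary: on the relevant simplex one always has $v<t$, and since $\frac{\partial K}{\partial t}(\cdot,v)$ is decreasing, the mean value theorem gives $\frac{1}{\epsilon}\Delta_\epsilon K(t,v)\le\frac{\partial K}{\partial t}(t,v)$, hence $|\eta_\epsilon(t,v)|\le 2\frac{\partial K}{\partial t}(t,v)$ with no boundary subtlety.
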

\begin{proof}
Of course, we have pointwise convergence 
$$\lim_{\epsilon \downarrow 0}\varphi^{(2)}_Y(t,v;r)\Big\{\frac{1}{\epsilon}\Delta_\epsilon K(t,v) - \frac{\partial K}{\partial t}(t,v)\Big\}=0$$
almost surely, for each $r < v < t$. Moreover, a simple application of Mean Value theorem yields

$$\Big|\varphi^{(2)}_Y(t,v;r)\Big\{\frac{1}{\epsilon}\Delta_\epsilon K(t,v) - \frac{\partial K}{\partial t}(t,v)\Big\}\Big|\le 2 |\varphi^{(2)}_Y(t,v;r)|\frac{\partial K}{\partial t}(t,v),$$
for each $r < v < t$. By Assumption \ref{I4},   

$$\mathbb{E}\Bigg|\int_r^T \int_r^t \varphi^{(2)}_Y(t,v;r)\Big\{\frac{1}{\epsilon}\Delta_\epsilon K(t,v) - \frac{\partial K}{\partial t}(t,v)\Big\}
dvdt\Bigg|^2$$
$$\le 4\mathbb{E}\Bigg|\int_r^T \int_r^t |\varphi^{(2)}_Y(t,v;r)|\frac{\partial K}{\partial t}(t,v)dvdt\Bigg|^2 < \infty,$$
for almost all $r$ w.r.t. Lebesgue on $[0,T]$. Hence,  

$$\lim_{\epsilon\downarrow 0}\Bigg(\int_r^T \int_r^t \varphi^{(2)}_Y(t,v;r)\Big\{\frac{1}{\epsilon}\Delta_\epsilon K(t,v) - \frac{\partial K}{\partial t}(t,v)\Big\}
dvdt\Bigg)^2=0$$
almost surely and for almost all $r$. Moreover, 

$$\Bigg(\int_r^T \int_r^t \varphi^{(2)}_Y(t,v;r)\Big\{\frac{1}{\epsilon}\Delta_\epsilon K(t,v) - \frac{\partial K}{\partial t}(t,v)\Big\}
dvdt\Bigg)^2$$
\begin{equation}\label{secderla}
\le 4 \Bigg|\int_r^T \int_r^t |\varphi^{(2)}_Y(t,v;r)|\frac{\partial K}{\partial t}(t,v)dvdt\Bigg|^2,
\end{equation}
where Assumption \ref{I4} ensures that the right-hand side of (\ref{secderla}) is integrable w.r.t. $\mathbb{P}\times \text{Leb}$. Then, bounded convergence theorem allows us to conclude the proof.  
\end{proof}

\begin{remark}\label{remA4}
A direct application of Fubini's theorem, Jensen and Cauchy-Schwarz inequalities yield the following estimate 

$$\mathbb{E}\int_0^T\Bigg(\int_r^T  |\mathbb{E}^r[Y_t]|\frac{\partial K}{\partial t}(t,r)dt\Bigg)^2dr$$
$$=\int_{[0,T]^2}\int_0^{x_1\wedge x_2} \mathbb{E} \big|\mathbb{E}^r[Y_{x_1}] \mathbb{E}^r[Y_{x_2}]\big| \frac{\partial K}{\partial x_1}(x_1,r)\frac{\partial K}{\partial x_2}(x_2,r)drdx_1dx_2 $$
$$\le \int_{[0,T]^2} \big\|Y_{x_1}\|_{L^2(\Omega)} \big\|Y_{x_2}\|_{L^2(\Omega)} \frac{\partial^2 R}{\partial x_1\partial x_2}(x_1,x_2)dx_1dx_2.$$
Similarly,

$$\int_0^T\Bigg(\int_r^T  |\mathbb{E}[Y_t]|\frac{\partial K}{\partial t}(t,r)dt\Bigg)^2dr$$
$$=\int_{[0,T]^2}\int_0^{x_1\wedge x_2}  \big|\mathbb{E}[Y_{x_1}] \mathbb{E}[Y_{x_2}]\big| \frac{\partial K}{\partial x_1}(x_1,r)\frac{\partial K}{\partial x_2}(x_2,r)drdx_1dx_2 $$
$$= \int_{[0,T]^2} \big|\mathbb{E}[Y_{x_1}] \mathbb{E}[Y_{x_2}]\big| \frac{\partial^2 R}{\partial x_1\partial x_2}(x_1,x_2)dx_1dx_2.$$

\end{remark} 
%%%  The existence of the integral do not require conditions
%% on the second derivative. I do not rememeber the argument
\begin{lemma}\label{restconv}
If (\ref{modHtensor}) and Assumption \ref{I5} are fulfilled, then   
\begin{equation}\label{cond-1}
\lim_{\epsilon\downarrow 0}\mathbb{E}\int_0^T \Bigg| \int_r^T \mathbb{E}^r [Y_t] \Big(\frac{1}{\epsilon}\Delta_\epsilon K(t,r)-\frac{\partial K}{\partial t}(t,r)\Big)dt\Bigg|^2dr=0
\end{equation}
and

\begin{equation}\label{cond-2}
\lim_{\epsilon \downarrow 0}\int_0^T \int_0^t \mathbb{E}[\varphi^{(1)}_Y(t,u)]\frac{1}{\epsilon}\Delta_\epsilon K(t,u)dudt = \int_0^T \int_0^t \mathbb{E}[\varphi^{(1)}_Y(t,u)]\frac{\partial K}{\partial t} (t,u)dudt.
\end{equation}
\end{lemma}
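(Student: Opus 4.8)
The plan is to deduce both limits from the dominated convergence theorem, using two elementary facts about the Volterra kernel. First, by the very definition of the partial derivative one has the pointwise convergence $\frac{1}{\epsilon}\Delta_\epsilon K(t,u)\to\frac{\partial K}{\partial t}(t,u)$ on the simplex $\{0<u<t<T\}$ as $\epsilon\downarrow0$. Second, since $K(t,u)=\sqrt{2H}(t-u)^{H-\frac12}$ with $H-\frac12\in(0,\frac12)$, the mean value theorem gives $\frac{1}{\epsilon}\Delta_\epsilon K(t,u)=\sqrt{2H}\,(H-\frac12)(\bar{t}_\epsilon-u)^{H-\frac32}$ for some $\bar{t}_\epsilon\in(t,t+\epsilon)$, and, since $H-\frac32<0$ while $\bar{t}_\epsilon-u>t-u>0$, this produces the uniform bound
$$
0\le\frac{1}{\epsilon}\Delta_\epsilon K(t,u)\le\frac{\partial K}{\partial t}(t,u),\qquad 0<u<t<T,\quad \epsilon>0,
$$
which reproduces the domination \eqref{b1} (in the case $g\equiv1$). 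Setting $\xi_\epsilon(t,u):=\frac{1}{\epsilon}\Delta_\epsilon K(t,u)-\frac{\partial K}{\partial t}(t,u)$, we therefore have $\xi_\epsilon\to0$ pointwise and $|\xi_\epsilon(t,u)|\le\frac{\partial K}{\partial t}(t,u)$ for every $\epsilon>0$.

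The second assertion \eqref{cond-2} is then immediate: the deterministic integrand $\mathbb{E}[\varphi^{(1)}_Y(t,u)]\,\frac{1}{\epsilon}\Delta_\epsilon K(t,u)$ converges pointwise on $\{0<u<t<T\}$ to $\mathbb{E}[\varphi^{(1)}_Y(t,u)]\,\frac{\partial K}{\partial t}(t,u)$ and is dominated, uniformly in $\epsilon\in(0,1)$, by $|\mathbb{E}[\varphi^{(1)}_Y(t,u)]|\,\frac{\partial K}{\partial t}(t,u)$, which is integrable over the simplex by Assumption \ref{I5}. Dominated convergence yields \eqref{cond-2}.

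For the first assertion \eqref{cond-1}, I would square, apply Fubini's theorem (as in Remark \ref{remA4}), and rewrite the left-hand side as
$$
\int_{[0,T]^2}\int_0^{x_1\wedge x_2}\mathbb{E}\bigl[\mathbb{E}^r[Y_{x_1}]\,\mathbb{E}^r[Y_{x_2}]\bigr]\,\xi_\epsilon(x_1,r)\,\xi_\epsilon(x_2,r)\,dr\,dx_1\,dx_2.
$$
Since $\xi_\epsilon$ is deterministic, the conditional Jensen and Cauchy--Schwarz inequalities give $\bigl|\mathbb{E}[\mathbb{E}^r[Y_{x_1}]\,\mathbb{E}^r[Y_{x_2}]]\bigr|\le\|Y_{x_1}\|_{L^2(\Omega)}\|Y_{x_2}\|_{L^2(\Omega)}$, so the absolute value of the integrand is bounded, uniformly in $\epsilon$, by $\|Y_{x_1}\|_{L^2(\Omega)}\|Y_{x_2}\|_{L^2(\Omega)}\frac{\partial K}{\partial x_1}(x_1,r)\frac{\partial K}{\partial x_2}(x_2,r)$; integrating $r$ over $(0,x_1\wedge x_2)$ and using Lemma \ref{repSECDER}, this majorant integrates over the whole region to $\int_{[0,T]^2}\|Y_{x_1}\|_{L^2(\Omega)}\|Y_{x_2}\|_{L^2(\Omega)}\frac{\partial^2 R}{\partial x_1\,\partial x_2}(x_1,x_2)\,dx_1dx_2$, which is finite precisely by \eqref{modHtensor}. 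As $\xi_\epsilon(x_1,r)\,\xi_\epsilon(x_2,r)\to0$ for Lebesgue-almost every $(r,x_1,x_2)$, dominated convergence forces the displayed triple integral to $0$, which is exactly \eqref{cond-1}.

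The argument is essentially bookkeeping, and the one step that requires a bit of care — the main, if minor, obstacle — is the uniform-in-$\epsilon$ domination $\frac{1}{\epsilon}\Delta_\epsilon K\le\frac{\partial K}{\partial t}$; it follows from the monotonicity of $x\mapsto x^{H-\frac32}$ evaluated at the intermediate point supplied by the mean value theorem, and it is this bound that makes the two dominating functions above coincide with the finite quantities already controlled by Assumption \ref{I5} and \eqref{modHtensor}. Beyond that, one only needs the routine remark that the pointwise limits and bounds hold for Lebesgue-almost every point of the relevant simplex, which is all that the dominated convergence theorem requires.
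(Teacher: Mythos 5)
Your proposal is correct and follows essentially the same route as the paper: the mean-value-theorem domination $0\le\frac{1}{\epsilon}\Delta_\epsilon K(t,u)\le\frac{\partial K}{\partial t}(t,u)$ plus dominated convergence, with integrability of the majorants supplied by Assumption \ref{I5} for \eqref{cond-2} and by \eqref{modHtensor} via the computation of Remark \ref{remA4} for \eqref{cond-1}. The only (immaterial) difference is that you take the expectation inside and apply a single dominated-convergence step to the deterministic triple integral, whereas the paper (following the pattern of Lemma \ref{secconve}) argues almost-sure convergence of the inner integral first and then dominates the outer $\mathbb{E}\int_0^T\cdot\,dr$.
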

\begin{proof}
Under Assumption \ref{I5}, a simple application of bounded convergence theorem allows us to state that (\ref{cond-2}) holds true. The proof of (\ref{cond-1}) is entirely similar to the proof of Lemma \ref{secconve} (see also Remark \ref{remA4}), so we omit the details.  
\end{proof}
Summing Proposition \ref{firstISO} and Lemmas \ref{secconve} and \ref{restconv}, we conclude the proof of Theorem \ref{Koperator}. In the next sections, we will illustrate Theorem \ref{Koperator} with some examples.

\section{State-dependent case}\label{S3}

The following decomposition will play a key role. We shall represent 

$$B_t = \mathbb{E}^s [B_t] + B_t - \mathbb{E}^s 
[B_t],$$
where
$$\mathbb{E}^s[B_t]  = \int_{0}^s K(t,u)dW_u,\quad B_t - \mathbb{E}^s [B_t] = \int_s^t K(t,u)dW_u,$$
for $0 < s < t$ and $t\ge 0$. In the sequel, we denote 

\begin{eqnarray*}
\theta(s,t)&:=& \text{Var} \big(B_t - \mathbb{E}^s [B_t]\big)\\
&=& \frac{1}{2H}(t-s)^{2H},
\end{eqnarray*} 
for $0 < s < t\le T$.

For the sake of preciseness, we first recall the following property.
%%%  You define polynomial growth here but then you redefin
We say that a function $f:[0,T]\times \mathbb{R}\rightarrow\mathbb{R}$ has polynomial growth if there exists $q>0$ such that 

$$|f(t,x)|\lesssim_{q,T} (1 + |x|^q),$$
for every $t \in (0,T]$ and $x \in \mathbb{R}$.   

\begin{lemma}\label{repstate}
If $f:\mathbb{R}_+\times \mathbb{R}\rightarrow \mathbb{R}$ has polynomial growth, then 
$$\mathbb{E}^s\big[ f(t,B_t)\big] = \big(P_{\theta(s,t)}f\big) \big(t, \mathbb{E}^s[B_t]\big)$$
for $0 < s < t$. Here, 
\begin{equation}\label{orinSG}
\big(P_r f\big)(t,x)= \int_{\mathbb{R}} f(t, x+y) \frac{1}{\sqrt{2\pi r}}e^{\frac{-y^2}{2r}}dy = \int_{\mathbb{R}} f(t, z) \frac{1}{\sqrt{2\pi r}}e^{\frac{-(z-x)^2}{2r}}dz,
\end{equation}
for $r>0$ and $x \in \mathbb{R}$.
\end{lemma}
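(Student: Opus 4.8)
The plan is to exploit the orthogonal decomposition $B_t = \mathbb{E}^s[B_t] + \big(B_t - \mathbb{E}^s[B_t]\big)$ recorded just above the statement, combined with the elementary independence lemma (``freezing lemma'') for conditional expectations. The first summand $X := \mathbb{E}^s[B_t] = \int_0^s K(t,u)\,dW_u$ is $\mathcal{F}_s$-measurable, while the second summand $Z := B_t - \mathbb{E}^s[B_t] = \int_s^t K(t,u)\,dW_u$ is a centered Gaussian random variable with variance $\theta(s,t) = \int_s^t K(t,u)^2\,du$, and it is \emph{independent} of $\mathcal{F}_s$. Granting this, set $g(x,y) := f(t, x+y)$, so that $f(t,B_t) = g(X,Z)$; the freezing lemma then gives $\mathbb{E}^s\big[g(X,Z)\big] = h(X)$, where $h(x) := \mathbb{E}\big[g(x,Z)\big] = \mathbb{E}\big[f(t, x+Z)\big]$. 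Since $Z \sim \mathcal{N}\big(0,\theta(s,t)\big)$, the last expectation equals $\int_{\mathbb{R}} f(t, x+y)\,\frac{1}{\sqrt{2\pi\theta(s,t)}}\,e^{-y^2/(2\theta(s,t))}\,dy = \big(P_{\theta(s,t)}f\big)(t,x)$, and substituting $x = X = \mathbb{E}^s[B_t]$ yields the claimed identity.

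First I would establish the independence claim. Because $K(t,\cdot)$ is deterministic, the Wiener integral $\int_s^t K(t,u)\,dW_u$ is an $L^2(\Omega)$-limit of Riemann sums built from increments $W_{u_{j+1}} - W_{u_j}$ with $s \le u_j < u_{j+1} \le t$; hence $Z$ is measurable with respect to $\sigma\big(W_v - W_s : s \le v \le t\big)$, which is independent of $\mathcal{F}_s$ since $W$ has independent increments and $\mathbb{F}$ is the (completed) filtration generated by $W$. The Gaussianity of $Z$ and the value of its variance are immediate from It\^o's isometry, as in Lemma \ref{repSECDER}.

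Next I would check the integrability needed to run the freezing lemma under the mere polynomial growth of $f$ (rather than boundedness). Since $B_t$ is Gaussian it has moments of all orders, so $|f(t,B_t)| \lesssim_{q,T} 1 + |B_t|^q \in L^1(\Omega)$; similarly, for each fixed $x$ one has $\mathbb{E}\big[|f(t,x+Z)|\big] \lesssim_{q,T} 1 + \mathbb{E}|x+Z|^q < \infty$, so $h$ is well defined, of polynomial growth in $x$, and $h(X) \in L^1(\Omega)$. This is enough to carry out the standard approximation argument (first for $g$ bounded continuous, then by a monotone class / dominated convergence passage) which yields $\mathbb{E}\big[g(X,Z)\,\mathds{1}_A\big] = \mathbb{E}\big[h(X)\,\mathds{1}_A\big]$ for every $A \in \mathcal{F}_s$, i.e. $\mathbb{E}^s[f(t,B_t)] = h(X) = \big(P_{\theta(s,t)}f\big)\big(t, \mathbb{E}^s[B_t]\big)$.

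The only genuinely delicate points are the two just flagged: (i) the measurability and independence from $\mathcal{F}_s$ of the Wiener increment $\int_s^t K(t,u)\,dW_u$, and (ii) the use of the freezing lemma for a merely polynomially growing (unbounded, possibly discontinuous) Borel function $f$. Both are routine but should be spelled out with some care; once they are in place, the identification with the Gaussian kernel $P_{\theta(s,t)}$ is immediate from the density of $Z$.
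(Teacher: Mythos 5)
Your proposal is correct and follows essentially the same route as the paper's own proof: decompose $B_t = \mathbb{E}^s[B_t] + \int_s^t K(t,u)\,dW_u$, note that the second summand is a centered Gaussian of variance $\theta(s,t)$ independent of $\mathcal{F}_s$, and apply the freezing lemma to identify the conditional expectation with the heat semigroup evaluated at $\mathbb{E}^s[B_t]$. Your write-up is in fact slightly more careful than the paper's (which passes through conditioning on $\sigma(\mathbb{E}^s[B_t])$ without comment and does not address integrability under polynomial growth), but the underlying argument is identical.
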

\begin{proof}

To keep notation simple, we denote 

$$\tilde{B}^s_t:= B_t - \mathbb{E}^s [B_t], $$
for $0 < s < t$ and $t\ge 0$. Fix $0 < s < t$ with $0 < t\le T$. Observe 

\begin{eqnarray*}
\mathbb{E}^s \big[f(t,B_t)\big]&=& \mathbb{E}^s \big[f(t,\mathbb{E}^s[B_t] + \tilde{B}^s_t)\big]\\
&=& \mathbb{E} \Big[f(t,\mathbb{E}^s[B_t] + \tilde{B}^s_t)\big| \mathbb{E}^s[B_t]\Big].
\end{eqnarray*} 
Since $\tilde{B}^s_t$ is independent of $\mathbb{E}^s[B_t]$, we then have 

$$\mathbb{E}^s ~\big[f(t,B_t)\big] = \mathbb{E}\Big[ f(t,x + \tilde{B}^s_t)  \Big],\quad \text{for}~x = \mathbb{E}^s[B_t].$$
We know that $\tilde{B}^s_t$ is a zero mean Gaussian random variable with variance $\theta(s,t)$. Then, 

$$\mathbb{E}\Big[ f(t,x + \tilde{B}^s_t)  \Big] = \big(P_{\theta(s,t)} f\big)(t, x).$$

\end{proof}

\begin{lemma}\label{firstderSTATE}
Let $Y_t = g(t,B_t)$, where $g$ has polynomial growth. Then, the martingale derivative $\varphi^{(1)}_Y$ is given by

$$\varphi^{(1)}_Y(t,r) = \partial_x \big(P_{\theta(r,t)} g) (t,\mathbb{E}^r[B_t])K(t,r),$$
for $0 < r < t \le T$. 
\end{lemma}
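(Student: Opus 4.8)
The plan is to compute the It\^o martingale representation of $Y_t = g(t,B_t)$ directly by differentiating the conditional expectation $\mathbb{E}^r[Y_t]$ in $r$ and matching against the It\^o representation \eqref{martREP1}. Recall from Lemma \ref{repstate} that
$$\mathbb{E}^r[g(t,B_t)] = \big(P_{\theta(r,t)}g\big)\big(t,\mathbb{E}^r[B_t]\big),\qquad 0<r<t,$$
where $\mathbb{E}^r[B_t] = \int_0^r K(t,u)\,dW_u$ and $\theta(r,t)=\frac{1}{2H}(t-s)^{2H}|_{s=r}$. Since $\{\mathbb{E}^r[Y_t]\}_{0\le r\le t}$ is, for fixed $t$, an $\mathbb{F}$-martingale equal to $\mathbb{E}[Y_t]$ at $r=0$ and to $Y_t$ at $r=t$, the uniqueness part of the martingale representation theorem tells us that $\varphi^{(1)}_Y(t,r)$ is precisely the integrand appearing in $d\,\mathbb{E}^r[Y_t] = \varphi^{(1)}_Y(t,r)\,dW_r$. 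So the task reduces to applying It\^o's formula to $r\mapsto \Phi(r,M_r)$, where $\Phi(r,x):=(P_{\theta(r,t)}g)(t,x)$ and $M_r:=\mathbb{E}^r[B_t]=\int_0^r K(t,u)dW_u$, and reading off the $dW_r$-coefficient.

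The key steps, in order, are: (i) justify that $\Phi$ is smooth enough in both arguments to apply It\^o's formula --- here the heat-semigroup smoothing is essential, since $(P_r g)(t,\cdot)$ is $C^\infty$ in $x$ with all derivatives of polynomial growth even when $g(t,\cdot)$ is merely continuous of polynomial growth, and $r\mapsto\theta(r,t)$ is $C^1$ on $(0,t)$ with $\frac{\partial\theta}{\partial r}(r,t) = -(t-r)^{2H-1}$; (ii) record the backward-heat-equation identity $\frac{\partial}{\partial v}(P_v g) = \tfrac12 \partial_x^2 (P_v g)$, so that $\frac{\partial}{\partial r}\Phi(r,x) = \tfrac12 \partial_x^2(P_{\theta(r,t)}g)(t,x)\cdot\frac{\partial\theta}{\partial r}(r,t) = -\tfrac12(t-r)^{2H-1}\partial_x^2(P_{\theta(r,t)}g)(t,x)$; (iii) note that the quadratic variation of $M$ is $d\langle M\rangle_r = K(t,r)^2\,dr = 2H(t-r)^{2H-1}\,dr$, so the It\^o correction term is $\tfrac12\partial_x^2\Phi(r,M_r)\,d\langle M\rangle_r = \tfrac12\partial_x^2(P_{\theta(r,t)}g)(t,M_r)\cdot 2H(t-r)^{2H-1}\,dr$; (iv) observe $\frac{d\theta}{dr}(r,t) + K(t,r)^2 = -(t-r)^{2H-1} + 2H(t-r)^{2H-1} = (2H-1)(t-r)^{2H-1} \ne 0$ in general --- so the drift contributions do \emph{not} cancel here. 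Since $\mathbb{E}^r[Y_t]$ is nonetheless a martingale, the drift terms must in fact cancel; the resolution is that $P_{\theta(r,t)}g$ satisfies $\frac{\partial}{\partial v}(P_v g)=\tfrac12\partial_x^2(P_v g)$, and with the scaling $\theta(r,t)=\frac{1}{2H}(t-r)^{2H}$ one has $\frac{d\theta}{dr}(r,t)=-(t-r)^{2H-1}$ while $d\langle M\rangle_r=2H(t-r)^{2H-1}dr$; I must recheck the constant in $\theta$ or in $K$ so that these indeed match. Taking $K(t,u)=\sqrt{2H}(t-u)^{H-1/2}$ gives $\int_r^t K(t,u)^2 du = 2H\int_r^t(t-u)^{2H-1}du = (t-r)^{2H} = 2H\,\theta(r,t)$, i.e.\ $\frac{d}{dr}\big[\operatorname{Var}(\tilde B^r_t)\big] = -2H(t-r)^{2H-1} = -K(t,r)^2$, which is exactly the cancellation needed. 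Then It\^o's formula gives $d\,\mathbb{E}^r[Y_t] = \partial_x(P_{\theta(r,t)}g)(t,M_r)\,dM_r = \partial_x(P_{\theta(r,t)}g)(t,\mathbb{E}^r[B_t])\,K(t,r)\,dW_r$, and comparing with $Y_t = \mathbb{E}[Y_t] + \int_0^t\varphi^{(1)}_Y(t,r)dW_r$ yields
$$\varphi^{(1)}_Y(t,r) = \partial_x\big(P_{\theta(r,t)}g\big)\big(t,\mathbb{E}^r[B_t]\big)K(t,r),$$
as claimed.

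For the rigor of step (i) I would invoke the standard facts about the Gaussian (heat) semigroup on $\mathbb{R}$: if $|g(t,x)|\lesssim 1+|x|^q$, then for each $v>0$, $(x,v)\mapsto (P_v g)(t,x)$ is jointly $C^\infty$ on $\mathbb{R}\times(0,\infty)$, every spatial and temporal derivative again has polynomial growth in $x$ locally uniformly in $v$ bounded away from $0$, and the identities $\partial_v P_v g = \tfrac12\partial_x^2 P_v g$ hold classically; these growth bounds ensure all the It\^o-formula and martingale-integrability conditions hold on any interval $r\in[\delta,t]$ with $\delta>0$, and one passes $\delta\downarrow 0$ using $\int_0^t\mathbb{E}|\varphi^{(1)}_Y(t,r)|^2dr<\infty$, which follows from the polynomial growth of $\partial_x P_v g$ and the integrability of $K(t,\cdot)^2$ near $r=t$ (using $2H-1>0$). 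The main obstacle is precisely this handling of the singular endpoint $r\uparrow t$, where $\theta(r,t)\downarrow 0$ and the semigroup smoothing degenerates: one must verify that $\partial_x(P_{\theta(r,t)}g)(t,\mathbb{E}^r[B_t])$ stays in $L^2(\Omega\times[0,t])$ despite $P_{\theta(r,t)}g\to g(t,\cdot)$, which may be merely continuous. This is controlled by the elementary bound $\|\partial_x P_v h\|_\infty$ on compacts growing at most like $v^{-1/2}$ times a polynomial, combined with $\theta(r,t)^{-1/2}K(t,r)\simeq (t-r)^{H-1}$ being square-integrable near $t$ exactly because $2(H-1)>-1\iff H>1/2$ --- so the hypothesis $\tfrac12<H<1$ is used essentially here. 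With these estimates in hand, the computation above goes through and the formula for $\varphi^{(1)}_Y$ follows.
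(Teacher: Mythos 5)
Your proof is essentially the paper's own: both apply It\^o's formula to $r\mapsto u(r,\mathbb{E}^r[B_t])$ with $u(r,x)=(P_{\theta(r,t)}g)(t,x)$, use the heat equation $\partial_v P_v g=\tfrac12\partial^2_x P_v g$ together with the chain rule in $r$, observe that the drift cancels against the It\^o correction because $\tfrac{d}{dr}\mathrm{Var}(B_t-\mathbb{E}^r[B_t])=-K^2(t,r)$, and read off the $dW_r$-coefficient; you also correctly notice (and resolve) that the cancellation forces $\theta$ to be the genuine variance $(t-r)^{2H}$ rather than the constant $\tfrac{1}{2H}(t-r)^{2H}$ displayed in the paper, which is consistent with how the paper's own computation $\partial_r u=-K^2(t,r)\,\partial_s(P_sg)$ is actually carried out.

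One quantitative slip in your final paragraph: $\theta(r,t)^{-1/2}K(t,r)\simeq (t-r)^{-H}(t-r)^{H-\frac12}=(t-r)^{-\frac12}$, not $(t-r)^{H-1}$, and $(t-r)^{-1/2}$ is \emph{not} square-integrable after squaring (the integral diverges logarithmically at $r=t$), so this pointwise bound cannot by itself give $\varphi^{(1)}_Y(t,\cdot)\in L^2(\Omega\times[0,t])$. This does not break the argument: square-integrability follows instead from the It\^o isometry applied to the $L^2$-bounded martingale $r\mapsto\mathbb{E}^r[g(t,B_t)]$, namely $\mathbb{E}\int_0^t|\varphi^{(1)}_Y(t,s)|^2ds=\mathbb{E}|g(t,B_t)-\mathbb{E}[g(t,B_t)]|^2<\infty$, which is exactly how the paper closes the proof (its estimate (\ref{heatarg6})); the passage from $r<t$ to $r=t$ is then handled by martingale convergence rather than by the degenerating semigroup bound.
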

\begin{proof}
The argument of the proof lies on the properties of the heat semigroup. Fix the time variable $t \in (0,T]$. Let us define 

\begin{eqnarray*}
u(r,x)&:=& \big(P_{\theta(r,t)}g\big)(t,x);~0 < r < t. 
\end{eqnarray*}
From Lemma \ref{repstate}, we know that  $u(r,\mathbb{E}^r[B_t]) = \mathbb{E}^r [g(t,B_t)],$ for $0 < r < t$. 
Derivation chhain rule yields 

\begin{eqnarray}
\nonumber\partial_r u(r,x) &=& \partial_r \big(P_{\theta(r,t)}\big)g(t,x)\\
\label{heatarg1}&=& \partial_s \big(P_sg\big)(t,x)(-1)K^2(t,r);~s=\theta(r,t).
\end{eqnarray}
Recall that $(s,x)\mapsto (P_s g)(t,x)$ is the heat semigroup so that 

\begin{equation}\label{heatarg2}
\partial_s (P_s g)(t,x) = \frac{1}{2} \partial^2_x (P_s g)(t,x).
\end{equation}
for $s>0$ and $x \in \mathbb{R}$. From (\ref{heatarg1}) and (\ref{heatarg2}), we have

\begin{equation}\label{heatarg3}
\partial_r u(r,x) = -\frac{K^2(t,r)}{2} \partial^2_x (P_{\theta(r,t)} g)(t,x) =  -\frac{K^2(t,r)}{2} \partial^2_x u(r,x).
\end{equation}
Observe that $(r,x) \mapsto u(r,x)$ is $C^{1,2}$
%%% Careful to the domain
so that by applying It\^o's formula and (\ref{heatarg3}), we get 
\begin{eqnarray}
\nonumber u(r,\mathbb{E}^r[B_t])&=& \mathbb{E}[g(t,B_t)] +\int_0^r \partial_s  u(s,\mathbb{E}^s[B_t])ds + \int_0^r \partial_x u (s,\mathbb{E}^s[B_t])K(t,s)dW_s\\
\nonumber&+& \frac{1}{2} \int_0^r \partial^2_x u (s,\mathbb{E}^s[B_t])K^2(t,s)ds\\
\label{heatarg4}&=& \mathbb{E}[g(t,B_t)] + \int_0^r \partial_x u (s,\mathbb{E}^s[B_t])K(t,s)dW_s, 
\end{eqnarray}
for every $r < t$. Observe that $\partial_x u (s,\mathbb{E}^s[B_t])K(t,s) = \partial_x \big(P_{\theta(s,t)} g) (t,\mathbb{E}^s[B_t])K(t,s)$ for $s< t$. By continuity, Lemma \ref{repstate} and (\ref{heatarg4}), we get the local-martingale representation 
\begin{equation}\label{heatarg5}
g(t,B_t) = \mathbb{E}[g(t,B_t)] + \int_0^t \partial_x \big(P_{\theta(s,t)} g) (t,\mathbb{E}^s[B_t])K(t,s)dW_s
\end{equation}
From (\ref{heatarg5}) and the polynomial growth property of $g$, we get 
\begin{equation}\label{heatarg6}
\mathbb{E}\Bigg|\int_0^t \partial_x \big(P_{\theta(s,t)} g) (t,\mathbb{E}^s[B_t])K(t,s)dW_s\Bigg|^2 = \mathbb{E}\big| g(t,B_t)  - g(0,0) \big|^2 < \infty. 
\end{equation}
From (\ref{heatarg5}) and (\ref{heatarg6}), we conclude the proof. 
\end{proof}

\begin{remark}
Let $Y_t = g(t,B_t)$, where $g$ has polynomial growth. Then, $Y$ satisfies Assumption \ref{I1}. 
\end{remark}
%\begin{proof}
%Just observe that $\frac{\partial^2 R}{\partial x_1 \partial x_2}(x_1,x_2)dx_1dx_2$ is a positive finite measure over $[0,T]^2$ and use Remark \ref{remA4}.   
%\end{proof}

In order to check that $Y$ satisfies Assumptions \ref{I3} and \ref{I4}, we need to estimate $\varphi^{(1)}_Y(t,r)$ along the simplex $\{0 < r < t < T\}$. Unfortunately, the previous It\^o-type-formula argument jointly with the heat PDE used in the proof of Lemma \ref{firstderSTATE} do not apply. For this purpose, we apply a direct argument on the heat semigroup and this produces a sufficient regularity condition which fulfills the Assumptions \ref{I3} and \ref{I4} of Theorem \ref{Koperator}. We start with a technical lemma.

\begin{lemma}\label{bfder}
If $g$ has polynomial growth, then there exists an exponent $p>0$ such that

\begin{equation}
\sup_{0\le t\le T, 0 < b\le T}\Bigg|\int_{\mathbb{R}} g(t,z)\frac{(z-x)}{b} \exp \Big(\frac{-(z-x)^2}{2b} \Big) dz \Bigg|\lesssim_{M,p} (1 + |x|^p),
\end{equation}
for every $x \in \mathbb{R}$. 
\end{lemma}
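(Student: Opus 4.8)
The plan is to reduce the integral to a Gaussian average of $g$ by a parabolic scaling, after which the polynomial growth of $g$ immediately delivers the bound. Denote by $I(t,b,x)$ the integral appearing in the statement, and fix $q>0$ with $|g(t,z)|\lesssim_{q,T}1+|z|^q$ coming from the polynomial growth of $g$; the claim will hold with $p=q$, the constant $M$ being the polynomial growth constant of $g$.

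First I would perform the change of variables $z=x+\sqrt{b}\,w$. Then $z-x=\sqrt{b}\,w$ and $(z-x)^2/(2b)=w^2/2$, so the apparently singular prefactor $\frac{z-x}{b}=\frac{w}{\sqrt b}$ is exactly cancelled by the Jacobian $dz=\sqrt{b}\,dw$, yielding
\[
I(t,b,x)=\int_{\mathbb{R}} g\!\left(t,\,x+\sqrt{b}\,w\right)\,w\,e^{-w^2/2}\,dw .
\]
(Equivalently, up to the factor $\sqrt{2\pi b}$ this is $\partial_x (P_b g)(t,x)$ in the notation of \eqref{orinSG}, but the explicit Gaussian form is what I will use.) The key gain is that no negative power of $b$ survives, so there is nothing delicate near $b=0$; the change of variables is justified since both the original and the transformed integrand are dominated by a polynomial times a Gaussian.

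Next I would insert $|g(t,\cdot)|\lesssim_{q,T}1+|\cdot|^q$, use $b\le T$ together with the elementary inequality $|x+\sqrt{b}\,w|^q\lesssim_q |x|^q+T^{q/2}|w|^q$, to bound
\[
|I(t,b,x)|\lesssim_{q,T}\int_{\mathbb{R}}\big(1+|x|^q+T^{q/2}|w|^q\big)\,|w|\,e^{-w^2/2}\,dw ,
\]
and finally absorb the finite Gaussian moments $\int_{\mathbb{R}}|w|\,e^{-w^2/2}\,dw$ and $\int_{\mathbb{R}}|w|^{q+1}\,e^{-w^2/2}\,dw$ into the constant, obtaining $|I(t,b,x)|\lesssim_{q,T}1+|x|^q$ uniformly in $t\in[0,T]$ and $b\in(0,T]$. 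Taking the supremum over $t$ and $b$ then yields the asserted estimate with $p=q$. I do not expect a genuine obstacle here: the only point one might worry about, namely uniformity as $b\downarrow 0$, is dissolved by the change of variables, and the finiteness of $T$ is used solely to keep $b^{q/2}$ bounded.
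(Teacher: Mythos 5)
Your proposal is correct and follows essentially the same route as the paper: the paper first translates by $y=z-x$, bounds $|g|$ by polynomial growth, and then scales by $u=y/\sqrt{b}$ to reduce to finite Gaussian moments, whereas you merge the two substitutions into the single change of variables $z=x+\sqrt{b}\,w$; the resulting estimates are identical. No gaps.
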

\begin{proof}
By assumption, there exists $p>0$ and $M>$ such that 

$$|g(t,z)|\le M(1 + |z|^p),$$
for every $(t,z) \in [0,T]\times \mathbb{R}$. Make the change of variables $y=z-x$. Then,

$$I(x,b,t) = \int_{\mathbb{R}} g(t,x+y)\frac{y}{b} \exp \Big(\frac{-y^2}{2b} \Big) dy.$$
Then,

$$|I(x,b,t)|\lesssim_{M,q} \int_{\mathbb{R}} \big(1 +|x|^p + |y|^p\big) \frac{|y|}{b} \exp \Big(\frac{-y^2}{2b} \Big) dy $$
$$ = \int_{\mathbb{R}} \frac{|y|}{b} \exp \Big(\frac{-y^2}{2b} \Big) dy + |x|^p\int_{\mathbb{R}} \frac{|y|}{b} \exp \Big(\frac{-y^2}{2b} \Big) dy $$
$$+\int_{\mathbb{R}} \frac{|y|^{p+1}}{b} \exp \Big(\frac{-y^2}{2b} \Big) dy. $$ 
Change the variables $u = \frac{y}{\sqrt{b}}$ and observe that

$$\int_{\mathbb{R}} \frac{|y|}{b} \exp \Big(\frac{-y^2}{2b} \Big) dy = \int_{\mathbb{R}} \frac{|u|\sqrt{b}}{b} \exp \Big(\frac{-u^2}{2}\Big)\sqrt{b} du$$
$$ = \int_{\mathbb{R}} |u| \exp \Big(\frac{-u^2}{2}\Big) du = 2$$
Moreover, 

$$\int_{\mathbb{R}} \frac{|y|^{p+1}}{b} \exp \Big(\frac{-y^2}{2b} \Big) dy =b^{\frac{p}{2}}\int_{\mathbb{R}}|u|^{p+1} \exp \Big(\frac{-u^2}{2}\Big)du$$
$$\le T^{\frac{p}{2}}\int_{\mathbb{R}}|u|^{p+1} \exp \Big(\frac{-u^2}{2}\Big)du < \infty.$$
Then, 
 
$$|I(x,b,t)|_{M,p}\lesssim_{M,p} 2 + |x|^p + T^{\frac{p}{2}}\int_{\mathbb{R}}|u|^{p+1} \exp \Big(\frac{-u^2}{2}\Big)du.$$ 
 This concludes the proof.
\end{proof}

\begin{lemma}\label{firstderBOUND}
If $Y_t=g(t, B_t)$ and $g$ has polynomial growth, then there exists a positive random variable $G$ with moments of all orders such that 

\begin{equation}\label{est1derstate}
|\varphi^{(1)}_Y(t,u)|\lesssim_{T} (t-u)^{-\frac{1}{2}} G
\end{equation}
a.s. for every $0 < u < t \le T$. Therefore, $\varphi^{(1)}_Y$ fulfills Assumption \ref{I3}. 

\end{lemma}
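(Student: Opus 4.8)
The plan is to combine the explicit formula for $\varphi^{(1)}_Y$ from Lemma \ref{firstderSTATE} with the uniform polynomial bound from Lemma \ref{bfder}. Recall from Lemma \ref{firstderSTATE} that
\[
\varphi^{(1)}_Y(t,u) = \partial_x\big(P_{\theta(u,t)}g\big)\big(t,\mathbb{E}^u[B_t]\big)K(t,u),
\]
where $\theta(u,t)=\frac{1}{2H}(t-u)^{2H}$ and $K(t,u)=\sqrt{2H}(t-u)^{H-1/2}$. First I would differentiate under the integral in the representation \eqref{orinSG} of the heat semigroup to get
\[
\partial_x\big(P_r g\big)(t,x) = \int_{\mathbb{R}} g(t,z)\,\frac{z-x}{r}\,\frac{1}{\sqrt{2\pi r}}\exp\!\Big(\tfrac{-(z-x)^2}{2r}\Big)\,dz,
\]
so that Lemma \ref{bfder} applies with $b=r=\theta(u,t)$ (which ranges over $(0,T^{2H}/(2H)]\subset(0,C_T]$), up to the harmless extra factor $1/\sqrt{2\pi r}$ — note that $r^{-1/2}=\sqrt{2H}(t-u)^{-H}$, so this factor contributes another negative power of $(t-u)$. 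To avoid that, I would instead keep the $(2\pi r)^{-1/2}$ inside and apply Lemma \ref{bfder} directly to the full expression $\int_{\mathbb R} g(t,z)\frac{z-x}{r}\exp(\cdot)dz$ after absorbing constants; a cleaner route is to rescale $y=(z-x)/\sqrt r$ so that $\partial_x(P_rg)(t,x)=r^{-1/2}\int_{\mathbb R}g(t,x+\sqrt r\,y)\,y\,(2\pi)^{-1/2}e^{-y^2/2}dy$, and then the polynomial growth of $g$ together with $\sqrt r\le\sqrt{C_T}$ gives $|\partial_x(P_rg)(t,x)|\lesssim_{T} r^{-1/2}(1+|x|^p)$.

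Next I would evaluate at $x=\mathbb{E}^u[B_t]=\int_0^u K(t,s)dW_s$ and set $G:=\sup_{0\le u< t\le T}\big(1+|\mathbb{E}^u[B_t]|^p\big)$, or more conveniently $G:=1+\sup_{0\le t\le T}|B_t|^p+\sup_{0<u<t\le T}|\mathbb E^u[B_t]|^p$; by standard Gaussian estimates (the process $(u,t)\mapsto\mathbb E^u[B_t]$ is a continuous Gaussian field on the compact simplex, with variance bounded by $R(t,t)\le C_T$), $G$ has moments of all orders. Multiplying by $K(t,u)=\sqrt{2H}(t-u)^{H-1/2}$ and by $r^{-1/2}=\sqrt{2H}(t-u)^{-H}$ yields
\[
|\varphi^{(1)}_Y(t,u)| \lesssim_T (t-u)^{H-1/2}(t-u)^{-H}\,G = (t-u)^{-1/2}\,G,
\]
which is exactly \eqref{est1derstate}.

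Finally, to deduce that Assumption \ref{I3} holds, I would plug this bound into the integral appearing in Assumption \ref{I3} with the conjugate pair chosen, say, $p=\infty$ (using $\|G^2\|_{L^\infty}$ is not available, so rather a large finite $p$) — more precisely take $1<p\le\infty$ finite and large, $q$ its conjugate close to $1$, so that $q<\frac{2}{3/2-H}=\frac{4}{3-2H}$, which is possible since $H>1/2$ makes $\frac{4}{3-2H}>2>1$. Then $\|\varphi^{(1)}_Y(x_1,r)\varphi^{(1)}_Y(x_2,r)\|_{L^p(\Omega)}\lesssim_T (x_1-r)^{-1/2}(x_2-r)^{-1/2}\|G^2\|_{L^p(\Omega)}$, the latter being finite. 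Pairing this with the bound on $\sup_{\epsilon}\|\mathbf{D}^\epsilon_{x_1,x_2;r}B\|_{L^q(\Omega)}$ from Lemma \ref{Depsilonbounds} (which gives, for $1\le q\le 2$, a factor controlled by $\big(\int_0^r(x_1-s)^{2(H-1)}(x_2-s)^{2(H-3/2)}ds\big)^{1/2}$), one checks by Remark \ref{intDeps}-type reasoning that the triple integral over $\{r<x_1\wedge x_2\}$ converges: the singularities $(x_i-r)^{-1/2}$ and the ones coming from $\mathbf{D}^\epsilon$ are all integrable powers since $H>1/2$. The main obstacle here is the bookkeeping of exponents — verifying that the combined power of each variable near the diagonal stays strictly above $-1$ so that the integral is finite — but this is a routine computation once the pointwise bound \eqref{est1derstate} is in hand; I expect no conceptual difficulty beyond that.
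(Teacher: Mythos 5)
Your proposal is correct and follows essentially the same route as the paper: the explicit semigroup formula from Lemma \ref{firstderSTATE}, the rescaling/polynomial-growth bound (which is exactly the content of Lemma \ref{bfder}, yielding the extra $\theta(u,t)^{-1/2}\sim(t-u)^{-H}$ factor that combines with $K(t,u)$ to give $(t-u)^{-1/2}$), Fernique-type control of $G=1+\sup_{r\le t\le T}|\mathbb{E}^r[B_t]|^p$, and then the integrability check for Assumption \ref{I3}. The only cosmetic difference is your choice of a large exponent $p$ with $q$ near $1$ where the paper simply takes $p=q=2$ and carries out the exponent bookkeeping explicitly in \eqref{prodhalf}; both choices lead to the same convergent integral since $H>\tfrac12$.
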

\begin{proof}
Recall 

$$\varphi^{(1)}_Y(t,u)= \partial_x \big(P_{\theta(u,t)} g) (t,\mathbb{E}^u[B_t])K(t,u)$$
for $u < t$, where 

$$\big(P_{\theta(u,t)} g) (t,x)=\frac{1}{\sqrt{2\pi\theta(u,t)}}\int_\mathbb{R} g(t,z)\exp \Big( -\frac{(z-x)^2}{2\theta(u,t)} \Big)dz.$$
Then, 

$$\partial_x \big(P_{\theta(u,t)} g) (t,x)=\frac{1}{\sqrt{2\pi\theta(u,t)}}\frac{1}{\theta(u,t)}\int_\mathbb{R} g(t,z)(z-x)\exp \Big( -\frac{(z-x)^2}{2\theta(u,t)} \Big)dz.$$
By applying Lemma \ref{bfder} and using the polynomial growth of $g$, we get 

$$|\varphi^{(1)}_Y(t,u)|\lesssim_p (t-u)^{-\frac{1}{2}} \big(1+ \sup_{0 \le r \le t\le T} |\mathbb{E}^r [B_t]|^p\big)$$
for $u < t$ and some $p>0$. By applying Fernique's theorem on the Gaussian field $(r,t)\mapsto \mathbb{E}^r [B_t]$ over $\{0 \le r \le t\le T\}$, we conclude the proof of the estimate (\ref{est1derstate}).  Now, for each $r < x_1\wedge x_2$ with $x_1\neq x_2$, we have

$$\|\varphi^{(1)}_Y(x_1,r) \varphi^{(1)}_Y(x_2,r)\|_{L^q(\Omega)}\lesssim_{T,H,q} (x_1-r)^{-\frac{1}{2}} (x_2-r)^{-\frac{1}{2}},$$
for every $1\le q < \infty$. Choose for instance $q=2$ and observe that (see Remark \ref{supq2})

$$\int_0^T \int_0^{x_2}\int_0^{x_1} (x_1-r)^{-\frac{1}{2}} (x_2-r)^{-\frac{1}{2}} \Bigg(\int_0^r (x_1-s)^{2(H-1)}(x_2-s)^{2(H-\frac{3}{2})}ds\Bigg)^{\frac{1}{2}}dr dx_1dx_2$$
$$\le \int_{0}^T\int_0^{x_2} \int_0^{x_1} (x_1-r)^{H-\frac{3}{2}} (x_2-x_1)^{-\frac{1}{2}} \Bigg(\int_0^r (x_2-s)^{2(H-\frac{3}{2})}ds\Bigg)^{\frac{1}{2}}dr dx_1dx_2 $$
$$=\int_{0}^T\int_0^{x_2} \int_0^{x_1} (x_1-r)^{H-\frac{3}{2}} (x_2-x_1)^{-\frac{1}{2}} (x_2-r)^{H-1} dr dx_1dx_2$$
\begin{equation}\label{prodhalf}
\le \int_{0}^T\int_0^{x_2} \int_0^{x_1} (x_1-r)^{H-\frac{3}{2}} (x_2-x_1)^{-\frac{1}{2}} (x_2-x_1)^{H-1} dr dx_1dx_2 < \infty.
\end{equation}
A similar argument also allows us to state 

$$\int_0^T \int_0^{x_1}\int_0^{x_2} (x_1-r)^{-\frac{1}{2}} (x_2-r)^{-\frac{1}{2}} \Bigg(\int_0^r (x_1-s)^{2(H-1)}(x_2-s)^{2(H-\frac{3}{2})}ds\Bigg)^{\frac{1}{2}}dr dx_2dx_1 < \infty.$$

\end{proof}

Next, we investigate Assumption \ref{I4} related to the second martingale derivative  $\varphi_Y^{(2)}(t,v; r)$ for $r < v < t $. Instead of checking Assumption \ref{I6} and remain on the level of $\varphi^{(1)}_Y$, we go to the second derivative $\varphi^{(2)}_Y$ because we need to compute it anyway to provide the most explicit expression for $\int g(B)d^-B$. For this purpose, let us fix $v < t$. From Lemma \ref{firstderSTATE}, we know that 

$$\varphi^{(1)}_Y(t,v) = \partial_x \big(P_{\theta(v,t)} g) (t,\mathbb{E}^v[B_t])K(t,v).$$
Next, in order to compute explicitly $\varphi^{(2)}_Y$, we will make use of Malliavin calculus on the Gaussian space of the underlying Brownian motion $W$. More precisely, Clark-Ocone formula associated with $W$. Let $\mathbb{D}^{1,2}$ be the Sobolev-Malliavin space associated with $W$. 
We claim $\partial_x \big(P_{\theta(v,t)} g) (t,\mathbb{E}^v[B_t]) \in \mathbb{D}^{1,2}$. In fact, let us denote 

$$p_s(z):=\frac{1}{\sqrt{2\pi s}} \exp \big(  \frac{-z^2}{2s} \big)$$
Then, 

$$(P_sg)(t,x) = \int_\mathbb{R}g(t,y)p_s(y-x)dy$$
so that 

$$\partial_x (P_sg)(t,x) = \int_\mathbb{R}g(t,y)p_s(y-x)\frac{(y-x)}{s}dy$$
and

\begin{equation}\label{dersecder}
\partial^2_x \big(P_sg\big)(t,x) = \int_\mathbb{R}g(t,y)\Big[ \frac{(y-x)^2}{s^2} - \frac{1}{s} \Big]p_s(y-x)dy. 
\end{equation}

\begin{lemma}\label{growthsec}
If $g$ has polynomial growth, then there exists $q>0$ such that 
$$|\partial^2_x P_sg(t,x)|\lesssim_q s^{-1} \big( 1 + |x|^q\big),$$
for every $t,s \in (0,T]$ and $x \in \mathbb{R}$. 
\end{lemma}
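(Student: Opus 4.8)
The plan is to work directly from the explicit formula \eqref{dersecder} for $\partial^2_x P_s g$ and to reduce everything to standard Gaussian moment integrals after a parabolic rescaling. Writing $p_s(z) = (2\pi s)^{-1/2}e^{-z^2/(2s)}$ and invoking the polynomial growth hypothesis, fix $M>0$ and $p>0$ with $|g(t,y)| \le M(1+|y|^p)$ for all $(t,y)\in[0,T]\times\mathbb{R}$. Starting from \eqref{dersecder}, the change of variables $z=y-x$ together with the elementary bound $\bigl|\tfrac{z^2}{s^2}-\tfrac1s\bigr|\le\tfrac{z^2}{s^2}+\tfrac1s$ gives
\[
|\partial^2_x P_s g(t,x)| \le M\int_{\mathbb{R}}\bigl(1+|x+z|^p\bigr)\Bigl(\frac{z^2}{s^2}+\frac{1}{s}\Bigr)p_s(z)\,dz.
\]

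Next I would split the integrand using the convexity estimate $(1+|x+z|^p)\lesssim_p 1+|x|^p+|z|^p$ and then rescale $u=z/\sqrt{s}$, under which $p_s(z)\,dz=\phi(u)\,du$ with $\phi$ the standard Gaussian density, $z^2/s^2 = u^2/s$ and $|z|^p = s^{p/2}|u|^p$. This extracts the factor $s^{-1}$ and leaves
\[
|\partial^2_x P_s g(t,x)| \lesssim_p \frac{1}{s}\int_{\mathbb{R}}\bigl(1+|x|^p+s^{p/2}|u|^p\bigr)(u^2+1)\,\phi(u)\,du.
\]
Since $s\le T$ we have $s^{p/2}\le T^{p/2}$, and all the Gaussian moments $\int_{\mathbb{R}}|u|^k\phi(u)\,du$ are finite; bounding the integral term by term yields $\int_{\mathbb{R}}(1+|x|^p+s^{p/2}|u|^p)(u^2+1)\phi(u)\,du \lesssim_{p,T} 1+|x|^p$. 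Combining the last two displays gives $|\partial^2_x P_s g(t,x)|\lesssim_{p,T} s^{-1}(1+|x|^p)$, which is precisely the claim with $q=p$.

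There is no serious obstacle here; the only point requiring care is the uniformity over $s\in(0,T]$. The factor $s^{p/2}$ produced by the $|z|^p$ coming from the growth of $g$ must be absorbed into the constant via $s\le T$, and one should check that both the $z^2/s^2$ term and the $1/s$ term contribute exactly the power $s^{-1}$ after rescaling — they do, since $\tfrac{z^2}{s^2}p_s(z)\,dz$ becomes $s^{-1}u^2\phi(u)\,du$ — so that no worse negative power of $s$ appears. The polynomial exponent $q$ in the statement is simply the growth exponent $p$ of $g$.
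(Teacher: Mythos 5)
Your proposal is correct and follows essentially the same route as the paper's proof: both start from the explicit formula \eqref{dersecder}, translate by $z=y-x$, split the polynomial growth as $1+|x|^q+|z|^q$, rescale $u=z/\sqrt{s}$ to extract the factor $s^{-1}$, and absorb $s^{q/2}\le T^{q/2}$ into the constant using finiteness of Gaussian moments. No discrepancies to report.
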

\begin{proof}
From (\ref{dersecder}), we have 

$$|\partial^2_x \big(P_sg\big)(t,x)| \le \int_\mathbb{R}|g(t,y)|\Big[ \frac{(y-x)^2}{s^2} + \frac{1}{s} \Big]p_s(y-x)dy. 
$$
Change the variables $z=y-x$. Then, 

$$|\partial^2_x \big(P_sg\big)(t,x)| \le \int_\mathbb{R}|g(t,z+x)|\Big[ \frac{z^2}{s^2} + \frac{1}{s} \Big]p_s(z)dz. 
$$
where $|g(t,z+x)|\lesssim (1 + |x|^q + |z|^q)$, for every $(t,z,x) \in [0,T]\times \mathbb{R}^2$ for some $q>0$. Then, 

$$|\partial^2_x \big(P_sg\big)(t,x)| \lesssim \frac{1}{\sqrt{2\pi s}} \int_\mathbb{R}[1 + |x|^q + |z|^q]\Big[ \frac{z^2}{s^2} + \frac{1}{s} \Big]e^{\frac{-z^2}{2s}}dz. 
$$
Make the change of variable $u = \frac{z}{\sqrt{s}}$ so that $e^{\frac{-z^2}{2s}} = e^{\frac{-u^2}{2}}$ and $\frac{z^2}{s^2} = \frac{u^2}{s}$. As a result, 

$$|\partial^2_x \big(P_sg\big)(t,x)| \lesssim \frac{1}{\sqrt{2\pi s}} \int_\mathbb{R}[1 + |x|^q + s^{\frac{q}{2}}|u|^q]\Big[ \frac{u^2}{s} + \frac{1}{s} \Big]e^{\frac{-u^2}{2}}\sqrt{s}du 
$$
$$\le \frac{1}{\sqrt{2\pi } s} \int_\mathbb{R}[1 + |x|^q + s^{\frac{q}{2}}|u|^q](u^2+1)e^{\frac{-u^2}{2}}du. $$
Observe that both integrals 

$$C_1=\int_\mathbb{R}(u^2+1)e^{\frac{-u^2}{2}}du,\quad C_2 = \int_\mathbb{R}|u|^q (u^2+1)e^{\frac{-u^2}{2}}
du.$$
are finite for every $q>0$. Then, 
$$|\partial^2_x \big(P_sg\big)(t,x)|\lesssim \frac{1}{s}\Big[ (1+ |x|^q)C_1 + s^{\frac{q}{2}} C_2 \Big]\lesssim_T \frac{1}{s}(1 + |x|^q),$$
for every $t,s \in [0,T]$ and $x \in \mathbb{R}$. 
\end{proof}

Observe that for each $v < t$, $\varphi^{(1)}_Y(t,v)$ is an $\mathcal{F}_v
$-measurable square-integrable random variable, so that there exists $\varphi^{(2)}_Y$ realizing 
$$\varphi^{(1)}_Y(t,v) = \mathbb{E}[\varphi^{(1)}_Y(t,v)]+ \int_0^v \varphi^{(2)}_Y(t,v; r)dW_r.$$
Moreover, $x\mapsto \partial_x (P_sg)(t,x) \in C^1$ so that $\varphi^{(1)}_Y(t,v) \in \mathbb{D}^{1,2}$ for $v< t$, where 

$$D_r \varphi^{(1)}_Y(t,v) = \partial^2_x \big(P_{\theta(v,t)} g) (t,\mathbb{E}^v[B_t])K(t,r)K(t,v),$$
for $ r < v < t$. 
\begin{lemma}
If $g$ is a Borel function with polynomial growth, then 

\begin{eqnarray*}
\varphi^{(2)}_Y(t,v; r)&=& \mathbb{E}^r\big[ \partial^2_x \big(P_{\theta(v,t)} g) (t,\mathbb{E}^v[B_t])\big]K(t,r)K(t,v)\\
& = & \partial^2_x \big(P_{\theta(r,t)} g) (t,\mathbb{E}^r[B_t])K(t,r)K(t,v),
\end{eqnarray*}
for every $r < v< t\le T$.
\end{lemma}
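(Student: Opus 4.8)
The plan is to read off $\varphi^{(2)}_Y$ from the Clark--Ocone formula applied to the $\mathcal{F}_v$-measurable random variable $\varphi^{(1)}_Y(t,v)$, whose Malliavin derivative has just been recorded, and then to collapse the resulting conditional expectation by the semigroup property of $P$.

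First I would fix $r<v<t\le T$ and recall from Lemma~\ref{firstderSTATE} that $\varphi^{(1)}_Y(t,v)=\partial_x(P_{\theta(v,t)}g)(t,\mathbb{E}^v[B_t])K(t,v)$. Combining the chain rule on the Gaussian space of $W$ with the polynomial-growth bounds of Lemmas~\ref{bfder} and~\ref{growthsec} (and Fernique's theorem for the Gaussian field $(r,t)\mapsto\mathbb{E}^r[B_t]$ on $\{0\le r\le t\le T\}$) shows $\varphi^{(1)}_Y(t,v)\in\mathbb{D}^{1,2}$ with $D_r\varphi^{(1)}_Y(t,v)=\partial^2_x(P_{\theta(v,t)}g)(t,\mathbb{E}^v[B_t])K(t,r)K(t,v)$ for $r<v$, since $D_r\mathbb{E}^v[B_t]=K(t,r)$ for $r<v$. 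As $\varphi^{(1)}_Y(t,v)$ is $\mathcal{F}_v$-measurable, its martingale representation with respect to $W$ is $\varphi^{(1)}_Y(t,v)=\mathbb{E}[\varphi^{(1)}_Y(t,v)]+\int_0^v\varphi^{(2)}_Y(t,v;r)\,dW_r$, so the Clark--Ocone formula yields $\varphi^{(2)}_Y(t,v;r)=\mathbb{E}^r\big[D_r\varphi^{(1)}_Y(t,v)\big]$ for Lebesgue-a.e.\ $r<v$; factoring out the deterministic $K(t,r)K(t,v)$ gives the first claimed identity, and passing to a version continuous in $r$ (recall $r\mapsto\mathbb{E}^r[B_t]$ is a.s.\ continuous) makes it hold for all $r<v<t$.

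For the second identity I would condition once more: writing $\mathbb{E}^v[B_t]=\mathbb{E}^r[B_t]+\int_r^v K(t,u)\,dW_u$, the increment $\int_r^v K(t,u)\,dW_u$ is centered Gaussian with variance $\theta(r,t)-\theta(v,t)=\int_r^v K(t,u)^2\,du$ and independent of $\mathcal{F}_r$, so by independence and the definition of the heat semigroup $P$ (using that $\partial^2_x(P_{\theta(v,t)}g)$ has polynomial growth by Lemma~\ref{growthsec}) one gets $\mathbb{E}^r\big[\partial^2_x(P_{\theta(v,t)}g)(t,\mathbb{E}^v[B_t])\big]=\big(P_{\theta(r,t)-\theta(v,t)}[\partial^2_x(P_{\theta(v,t)}g)]\big)(t,\mathbb{E}^r[B_t])$. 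Since $x\mapsto(P_bg)(t,x)$ is $C^\infty$ with all $x$-derivatives of polynomial growth for $b>0$, $\partial^2_x$ commutes with $P_a$ for $a>0$ by differentiation under the integral sign, and the semigroup identity $P_{\theta(r,t)-\theta(v,t)}P_{\theta(v,t)}=P_{\theta(r,t)}$ collapses the right-hand side to $\partial^2_x(P_{\theta(r,t)}g)(t,\mathbb{E}^r[B_t])$, as desired. The only genuine obstacle is the bookkeeping needed to legitimately invoke the chain rule and Clark--Ocone in $\mathbb{D}^{1,2}$ — i.e.\ the uniform polynomial-growth control on $\partial_x P_sg$ and $\partial^2_x P_sg$ supplied by Lemmas~\ref{bfder} and~\ref{growthsec} together with Fernique's theorem — whereas the semigroup manipulations are routine.
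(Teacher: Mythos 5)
Your proof is correct, but the way you obtain the second identity differs genuinely from the paper's argument. Both proofs share the first step: the chain rule on the Gaussian space of $W$ gives $\varphi^{(1)}_Y(t,v)\in\mathbb{D}^{1,2}$ with $D_r\varphi^{(1)}_Y(t,v)=\partial^2_x\big(P_{\theta(v,t)}g\big)(t,\mathbb{E}^v[B_t])K(t,r)K(t,v)$, and Clark--Ocone yields $\varphi^{(2)}_Y(t,v;r)=\mathbb{E}^r\big[D_r\varphi^{(1)}_Y(t,v)\big]$. Where you diverge is in collapsing $\mathbb{E}^r\big[\partial^2_x(P_{\theta(v,t)}g)(t,\mathbb{E}^v[B_t])\big]$ to $\partial^2_x(P_{\theta(r,t)}g)(t,\mathbb{E}^r[B_t])$. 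The paper first treats $g$ of class $C^2$, where $\partial^2_x P_s g = P_s\partial^2_x g$ lets one write the quantity as $\mathbb{E}^r\big[\mathbb{E}^v[\partial^2_x g(t,B_t)]\big]$ and apply the tower property; it then handles general Borel $g$ of polynomial growth by mollification $g_n=g*\eta_{1/n}$, uniform polynomial bounds, and dominated convergence. You instead exploit that $P_{\theta(v,t)}g$ is already smooth for any Borel $g$ of polynomial growth: decomposing $\mathbb{E}^v[B_t]=\mathbb{E}^r[B_t]+\int_r^v K(t,u)\,dW_u$ with the increment independent of $\mathcal{F}_r$ and of variance $\theta(r,t)-\theta(v,t)$, the conditional expectation becomes $P_{\theta(r,t)-\theta(v,t)}$ applied to $\partial^2_x P_{\theta(v,t)}g$, and commuting $\partial^2_x$ with the semigroup plus Chapman--Kolmogorov gives $\partial^2_x P_{\theta(r,t)}g$ directly. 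Your route avoids the approximation argument altogether, at the cost of justifying differentiation under the integral sign (which Lemma~\ref{growthsec} already supplies); the paper's route is longer but makes the probabilistic content of $\varphi^{(2)}_Y$ for smooth $g$, namely $\mathbb{E}^r[\partial^2_x g(t,B_t)]K(t,r)K(t,v)$, explicit. The only point to be careful about in your write-up, which you do flag, is that Clark--Ocone identifies $\varphi^{(2)}_Y(t,v;\cdot)$ only for Lebesgue-a.e.\ $r<v$, so the pointwise statement for every $r$ requires selecting the continuous version.
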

\begin{proof}
Fix $r < v < t$. The assertion is clearly true if $g$ is $C^2$ in the spatial variable. Indeed, in this case we observe

\begin{eqnarray*}
\varphi^{(1)}_Y(t,v)= \big(P_{\theta(v,t)} \partial_x g\big) (t,\mathbb{E}^v[B_t]) K(t,v)= \partial_x \big(P_{\theta(v,t)} g\big) (t,\mathbb{E}^v[B_t]) K(t,v)
\end{eqnarray*}
and the Malliavin derivative on the Gaussian space of $W$ is 

\begin{eqnarray*}
D_r \varphi^{(1)}_Y(t,v) &=& \partial^2_x \big(P_{\theta(v,t)} g\big) (t,\mathbb{E}^v[B_t]) K(t,r)K(t,v)\\
&=& \big(P_{\theta(v,t)} \partial^2_x g\big) (t,\mathbb{E}^v[B_t]) K(t,r)K(t,v)\\
&=& \mathbb{E}^v \big[ \partial^2_x g (t,B_t) \big]K(t,r)K(t,v).
\end{eqnarray*}
As a result, 

\begin{eqnarray*}
\varphi^{(2)}_Y(t,v; r)&=&\mathbb{E}^r[D_r \varphi_Y(t,v)]= \mathbb{E}^r[ \partial^2_x \big(P_{\theta(v,t)} g\big) (t,\mathbb{E}^v[B_t])] K(t,r)K(t,v)\\
&=& \mathbb{E}^r \big[ \partial^2_x g(t,B_t)\big] K(t,r)K(t,v)\\
&=&  \big(P_{\theta(r,t)} \partial^2_x g\big) (t,\mathbb{E}^r[B_t]) K(t,r)K(t,v)\\
&=& \partial^2_x  \big(P_{\theta(r,t)} g\big) (t,\mathbb{E}^r[B_t]) K(t,r)K(t,v).
\end{eqnarray*}

Take a standard mollifier $\eta_{\frac{1}{n}}$ (unit mass, compact support) and set
$$g_n(t,y)=\int_\mathbb{R}g(t,y-z)\eta_{\frac{1}{n}}(z)dz.$$
We have  

\begin{equation}\label{exch1}
\mathbb{E}^r \big[ \partial^2_x \big(P_{\theta(v,t)} g_n\big)(t,\mathbb{E}^v[B_t])\big] = \partial^2_x \big( P_{\theta(r,t)} g_n \big)(t,\mathbb{E}^r[B_t]),
\end{equation}
for every $n\ge 1$. Then,
using $\int\eta_{n^{-1}}dy=1$ and the compact support of $\eta_{n^{-1}}$, we have 

$$|g_n(t,y)|\le \int C(1+|y-z|^m)\eta_{n^{-1}}(z)dz\le C'(1+|y|^m),$$
for some $C'$ independent of $n$. So $\{g_n(t,\cdot); n\ge 1\}$ is uniformly of polynomial growth in the spacial variable. Since $g(t,\cdot)$ is locally integrable, then standard approximation by mollifiers yields

$$g_n(t,y)\rightarrow g(t,y)\quad \text{for a.e}~y \in \mathbb{R},$$
as $n\rightarrow + \infty$. Now, fix $s>0, (t,x) \in [0,T]\times \mathbb{R}$. Write 

$$L_s(u)=\Big( \frac{u^2}{s^2} -\frac{1}{s}\Big)\frac{1}{\sqrt{2\pi s}}e^{\frac{-u^2}{2s}}.$$
Hence, $|L_s(u)|\le C_s(1+u^2)e^{\frac{-u^2}{(2s)}}$ and $L_s \in L^1(\mathbb{R})$ with all polynomial moments finite, where $C_s$ is a positive constant which depends on $s$. Hence, with the uniform bound from above, we have 

$$|g_n(t,y)L_s(y-x)|\le C'(1+|y|^m)|L_s(y-x)|=:J(y).$$
Letting $u=y-x$, 

$$\int_{\mathbb{R}}J(y)dy = \int_{\mathbb{R}}(1+|u+x|^m)|L_s(u)|du < \infty,$$
because $L_s(u)$ is Gaussian times a polynomial, so it has finite moments of all orders. Therefore, $J \in L^1(\mathbb{R})$ and the bound is independent of $n$. Bounded convergence theorem yields  

\begin{eqnarray*}
\lim_{n\rightarrow +\infty}\partial^2_x \big(P_sg_n\big)(t,x) &=& \lim_{n\rightarrow +\infty}\int_\mathbb{R}g_n(t,y)\Big[ \frac{(y-x)^2}{s^2} - \frac{1}{s} \Big]p_s(y-x)dy\\
&=& \int_\mathbb{R}g(t,y)\Big[ \frac{(y-x)^2}{s^2} - \frac{1}{s} \Big]p_s(y-x)dy.
\end{eqnarray*}
This shows that 

\begin{equation}\label{exch2}
\lim_{n\rightarrow +\infty}\partial^2_x \big( P_{\theta(r,t)} g_n \big)(t,\mathbb{E}^r[B_t]) = \partial^2_x \big( P_{\theta(r,t)} g \big)(t,\mathbb{E}^r[B_t])
\end{equation}
and 
\begin{equation}\label{exch3}
\lim_{n\rightarrow +\infty} \mathbb{E}^r \big[ \partial^2_x \big(P_{\theta(v,t)} g_n\big)(t,\mathbb{E}^v[B_t])\big] = \mathbb{E}^r \big[ \partial^2_x \big(P_{\theta(v,t)} g\big)(t,\mathbb{E}^v[B_t])\big]
\end{equation}
in $L^2(\mathbb{P})$. Identities (\ref{exch1}), (\ref{exch2}), (\ref{exch3}) and Lemma \ref{growthsec} allow us to conclude the proof. 
\end{proof}

We now check that $Y_t = g(t,B_t)$ satisfies Assumption \ref{I4}.

\begin{lemma}
If $g$ has polynomial growth, then $\varphi^{(2)}_Y$ satisfies Assumption \ref{I4}.   
\end{lemma}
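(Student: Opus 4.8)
The plan is to bound $|\varphi^{(2)}_Y(t,v;r)|$ pointwise along the simplex $\{0<r<v<t\le T\}$ and thereby reduce Assumption \ref{I4} to a purely deterministic integrability statement combined with a single Fernique-type moment estimate. Starting from the closed-form expression
$$\varphi^{(2)}_Y(t,v;r)=\partial^2_x\big(P_{\theta(r,t)}g\big)\big(t,\mathbb{E}^r[B_t]\big)\,K(t,r)\,K(t,v),$$
I would first invoke Lemma \ref{growthsec} with $s=\theta(r,t)=\tfrac{1}{2H}(t-r)^{2H}$ to produce an exponent $q>0$ such that
$$\big|\partial^2_x\big(P_{\theta(r,t)}g\big)(t,x)\big|\lesssim_{q,H}(t-r)^{-2H}\big(1+|x|^q\big),$$
and then substitute the explicit forms $K(t,r)=\sqrt{2H}(t-r)^{H-\frac12}$, $K(t,v)=\sqrt{2H}(t-v)^{H-\frac12}$ and $\frac{\partial K}{\partial t}(t,v)=c_H(t-v)^{H-\frac32}$. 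Collecting exponents yields, for $r<v<t\le T$,
$$|\varphi^{(2)}_Y(t,v;r)|\,\frac{\partial K}{\partial t}(t,v)\lesssim_{q,H}(t-r)^{-H-\frac12}\,(t-v)^{2H-2}\,G,$$
where $G:=1+\sup_{0\le r\le t\le T}|\mathbb{E}^r[B_t]|^q$ is a single $\mathcal{F}_T$-measurable random variable independent of $(r,v,t)$.

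The second step is the two deterministic integrations, performed in the order $v$ then $t$. Because $H>\tfrac12$ we have $2H-2>-1$, so $\int_r^t(t-v)^{2H-2}\,dv=\frac{(t-r)^{2H-1}}{2H-1}$; multiplying by $(t-r)^{-H-\frac12}$ leaves the exponent $H-\frac32$, which is again strictly larger than $-1$ by $H>\tfrac12$, whence $\int_r^T(t-r)^{H-\frac32}\,dt\le\frac{T^{H-\frac12}}{H-\frac12}$ uniformly in $r\in[0,T]$. Consequently
$$\int_r^T\int_r^t|\varphi^{(2)}_Y(t,v;r)|\,\frac{\partial K}{\partial t}(t,v)\,dv\,dt\lesssim_{q,H}G$$
uniformly in $r$, and therefore
$$\mathbb{E}\int_0^T\Bigg(\int_r^T\int_r^t|\varphi^{(2)}_Y(t,v;r)|\frac{\partial K}{\partial t}(t,v)\,dv\,dt\Bigg)^2dr\lesssim_{q,H}T\,\mathbb{E}\big[G^2\big].$$
All interchanges of $\mathbb{E}$ with the Lebesgue integrals here are legitimate by Tonelli's theorem, since every integrand is nonnegative.

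Finally I would close the argument by observing that $\mathbb{E}[G^2]<\infty$: this is exactly the Fernique estimate already used in the proof of Lemma \ref{firstderBOUND}, applied to the centered Gaussian field $(r,t)\mapsto\mathbb{E}^r[B_t]=\int_0^rK(t,u)\,dW_u$ on the compact simplex $\{0\le r\le t\le T\}$, which furnishes finite moments of all orders for $\sup_{0\le r\le t\le T}|\mathbb{E}^r[B_t]|$, hence for $G$. The only delicate point --- the ``main obstacle'' of an otherwise routine argument --- is the exponent bookkeeping in the first step: one must check that after absorbing the singular factor $\theta(r,t)^{-1}\sim(t-r)^{-2H}$ coming from Lemma \ref{growthsec} against the two kernel factors $K(t,r)K(t,v)$ and the weight $\frac{\partial K}{\partial t}(t,v)$, the residual singularities $(t-v)^{2H-2}$ and $(t-r)^{H-\frac32}$ are still integrable on the simplex, which holds precisely because $\tfrac12<H<1$. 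No anticipating/Skorohod correction is needed, since $\varphi^{(2)}_Y$ is already available in explicit form.
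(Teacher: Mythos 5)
Your proposal is correct and follows essentially the same route as the paper: the pointwise bound $|\varphi^{(2)}_Y(t,v;r)|\,\frac{\partial K}{\partial t}(t,v)\lesssim (t-r)^{-H-\frac12}(t-v)^{2H-2}G$ via Lemma \ref{growthsec}, the deterministic $v$-then-$t$ integration giving a bound of order $(T-r)^{H-\frac12}G$ uniformly in $r$, and the Fernique moment estimate for $G$. The only difference is that you spell out the final squaring and expectation step, which the paper leaves implicit.
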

\begin{proof}
By Lemma \ref{growthsec}, we know there exists $q>0$ such that 
$$|\partial^2_x (P_sg)(t,x)|\lesssim_q s^{-1} \big( 1 + |x|^q\big),$$
for every $s>0,t>0$ and $x \in \mathbb{R}$. Then, 

\begin{eqnarray*}
|\varphi^{(2)}_Y(t,v;r)|&=& |\partial^2_x \big(P_{\theta(r,t)} g) (t,\mathbb{E}^rB_t)K(t,r)K(t,v)|\\
&\lesssim& (t-r)^{-2H}(1+ \sup_{r < t\le T}|\mathbb{E}^r[B_t]|^q)(t-r)^{H-\frac{1}{2}}(t-v)^{H-\frac{1}{2}},
\end{eqnarray*}
so that 

$$|\varphi^{(2)}_Y(t,v;r)|\frac{\partial K}{\partial t}(t,v)\lesssim (t-r)^{-2H}(1+ \sup_{r < t\le T}|\mathbb{E}^r[B_t]|^q)(t-r)^{H-\frac{1}{2}}(t-v)^{2H-2}$$
for $r < v< t$. Then, 

$$\int_r^T\int_r^t  |\varphi^{(2)}_Y(t,v;r)|\frac{\partial K}{\partial t}(t,v) dv dt\lesssim (1+ \sup_{r < t\le T}|\mathbb{E}^r[B_t]|^q)(T-r)^{H-\frac{1}{2}} $$
a.s for every $r \in [0,T]$. This shows that Assumption \ref{I4} is fulfilled. 
\end{proof}

We now check Assumption \ref{I5}. Recall 

\begin{equation*}
 (P_{\theta(u,t)} g)(t,x)
 = \frac{1}{\sqrt{2\pi \theta(u,t)}}
    \int_{\mathbb{R}} g(t,z)
     \exp\!\left(-\frac{(z-x)^2}{2\theta(u,t)}\right)\,dz .
\end{equation*}
Observe $\mathbb E^u[B_t]=\int_0^u (t-v)^{H-\frac12}\,dW_v$ is centered Gaussian with variance 
\begin{equation}\label{eq:sigma}
 \sigma^2(u,t):=\int_0^u (t-v)^{2H-1}dv
 = \frac{t^{2H}-(t-u)^{2H}}{2H},
\end{equation}
for $u \le t$. Then, for any $x\in\mathbb R$,
\begin{equation}\label{eq:gauss-rep}
\partial_x(P_{\theta(u,t)} g)(t,x)
=\int_{\mathbb R} g(t,z)\,\frac{z-x}{\theta(u,t)}\,p_\theta(z-x)\,dz
=\frac{1}{\sqrt{\theta(u,t)}}\,
\mathbb E\!\left[g\!\left(t,x+\sqrt{\theta(u,t)}\,Z\right)Z\right],
\end{equation}
where $Z\stackrel{d}{=} N(0,1)$ and $p_\theta$ is the $N(0,\theta(u,t))$-density.

\begin{lemma}[Gaussian regression identity]\label{lem:reg}
Let $Y_{u,t}:=\mathbb{E}^u[B_t] +\sqrt{\theta(u,t)}\,Z$, where $Z\stackrel{d}{=} N(0,1)$ is independent of $\mathbb{E}^u[B_t]$ and $u < t$. Then
\begin{equation}\label{eq:keyid}
 \mathbb E\!\left[\partial_x(P_{\theta(u,t)} g)(t,\mathbb{E}^u[B_t])\right]
 \;=\; \frac{1}{\sigma^2(u,t)+\theta(u,t)}\;\mathbb E\!\left[Y_{u,t}\,g(t,Y_{u,t})\right],
\end{equation}
where $Y_{u,t}\stackrel{d}{=} N\!\big(0,\sigma^2(u,t)+\theta(u,t)\big)$, for $u < t$. 
\end{lemma}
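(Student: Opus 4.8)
The plan is to reduce \eqref{eq:keyid} to an elementary Gaussian regression computation. Starting point is the representation \eqref{eq:gauss-rep}, namely
$$\partial_x(P_{\theta(u,t)}g)(t,x)=\frac{1}{\sqrt{\theta(u,t)}}\,\mathbb{E}\!\left[g\!\left(t,x+\sqrt{\theta(u,t)}\,Z\right)Z\right],\qquad Z\stackrel{d}{=}N(0,1),$$
which is legitimate because $g$ has polynomial growth and the Gaussian kernel is smooth, so differentiation under the integral sign is justified exactly as in Lemma \ref{bfder}. Setting $G:=\mathbb E^u[B_t]$, which is centered Gaussian with variance $\sigma^2(u,t)$ given by \eqref{eq:sigma} and independent of $Z$, the tower property and Fubini (all expectations finite by polynomial growth combined with finiteness of all Gaussian moments, as used in Lemmas \ref{bfder} and \ref{growthsec}) give
$$\mathbb{E}\!\left[\partial_x(P_{\theta(u,t)}g)(t,G)\right]=\frac{1}{\sqrt{\theta(u,t)}}\,\mathbb{E}\!\left[g(t,Y_{u,t})\,Z\right],\qquad Y_{u,t}=G+\sqrt{\theta(u,t)}\,Z.$$

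Next I would observe that $(Y_{u,t},Z)$ is a centered bivariate Gaussian vector with $\mathrm{Var}(Y_{u,t})=\sigma^2(u,t)+\theta(u,t)$, $\mathrm{Var}(Z)=1$ and $\mathrm{Cov}(Y_{u,t},Z)=\sqrt{\theta(u,t)}$. Hence $Z$ admits the orthogonal decomposition
$$Z=\frac{\sqrt{\theta(u,t)}}{\sigma^2(u,t)+\theta(u,t)}\,Y_{u,t}+\tilde Z,$$
where $\tilde Z$ is a centered Gaussian variable independent of $Y_{u,t}$. Substituting this into the previous display and using $\mathbb{E}[g(t,Y_{u,t})\tilde Z]=\mathbb{E}[g(t,Y_{u,t})]\,\mathbb{E}[\tilde Z]=0$ yields
$$\mathbb{E}\!\left[g(t,Y_{u,t})\,Z\right]=\frac{\sqrt{\theta(u,t)}}{\sigma^2(u,t)+\theta(u,t)}\,\mathbb{E}\!\left[Y_{u,t}\,g(t,Y_{u,t})\right].$$
Combining the last two displays and cancelling the factor $\sqrt{\theta(u,t)}$ (positive since $u<t$) gives precisely \eqref{eq:keyid}; moreover $Y_{u,t}\stackrel{d}{=}N\!\big(0,\sigma^2(u,t)+\theta(u,t)\big)$ by the independence of $G$ and $Z$.

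The argument is essentially routine once the regression is set up; the only points needing care are the justification of the differentiation under the integral producing \eqref{eq:gauss-rep} and the finiteness of all the integrals, both of which follow from the polynomial-growth bound on $g$, and the independence of $G=\int_0^u K(t,v)\,dW_v$ (which is $\mathcal F_u$-measurable) from the auxiliary variable $Z$. An alternative would be to invoke Stein's identity $\mathbb E[Y h(Y)]=\mathrm{Var}(Y)\,\mathbb E[h'(Y)]$, but since $g(t,\cdot)$ need not be differentiable this would require a mollification/approximation step; the regression identity above sidesteps that and is the cleaner path.
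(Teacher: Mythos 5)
Your proof is correct and follows essentially the same route as the paper: both start from the representation \eqref{eq:gauss-rep} and then exploit the bivariate Gaussian structure of $(Z,Y_{u,t})$ with $\mathrm{Cov}(Z,Y_{u,t})=\sqrt{\theta(u,t)}$; your orthogonal decomposition $Z=\frac{\sqrt{\theta(u,t)}}{\sigma^2(u,t)+\theta(u,t)}Y_{u,t}+\tilde Z$ is just the explicit form of the conditional expectation $\mathbb E[Z\mid Y_{u,t}]$ that the paper uses. No gaps.
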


\begin{proof}
For $u < t$, we set $Y_{u,t}=\mathbb{E}^u[B_t]+\sqrt{\theta(u,t)}Z$. The pair $(Z,Y_{u,t})$ is jointly Gaussian with
$\mathrm{Cov}(Z,Y_{u,t})=\sqrt{\theta(u,t)}$ and $\mathrm{Var}(Y_{u,t})=\sigma^2(u,t)+\theta(u,t)$. Hence, 

$$\mathbb E[Z\mid Y_{u,t}]=\frac{\sqrt{\theta(u,t)}}{\sigma^2(u,t)+\theta(u,t)}\,Y_{u,t}.$$ 
Taking expectations gives \eqref{eq:keyid}.
\end{proof}

\begin{lemma}\label{checkass5}
If $g$ has polynomial growth,
%%% 
%$$|g(t,x)|\lesssim_{m,T} (1 + |x|^m)$$
%for every $(t,x) \in [0,T]\times \mathbb{R}$,
then writing $c_k:=\mathbb E|Z|^k$ for $Z\stackrel{d}{=} N(0,1)$, we have 
\begin{align*}
 &\int_0^T\!\!\int_0^t 
   \Big|\mathbb{E}\big[\partial_x(P_{\theta(u,t)} g)\big(t,\mathbb{E}^u[B_t]\big)\big]\Big|
   \,(t-u)^{2H-2}\,du\,dt \\[2mm]
 &\qquad\lesssim_{H,T,m}
 \Bigg[ c_1\sqrt{2H}\,\frac{T^H}{H} \;+\;
        c_{m+1}\,(2H)^{-\frac{m-1}{2}}\,
        \frac{T^{Hm+H}}{Hm+H}\Bigg] \;<\;\infty.
\end{align*}
\end{lemma}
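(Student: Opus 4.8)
The plan is to reduce the claimed estimate to a one-variable integral via the Gaussian regression identity of Lemma~\ref{lem:reg}. First, observe that this lemma in fact verifies Assumption~\ref{I5} for $Y_t=g(t,B_t)$: by Lemma~\ref{firstderSTATE} one has $\mathbb{E}[\varphi^{(1)}_Y(t,u)]=\mathbb{E}\bigl[\partial_x(P_{\theta(u,t)}g)(t,\mathbb{E}^u[B_t])\bigr]K(t,u)$, and since $K(t,u)\,\partial_t K(t,u)=2H(H-\tfrac12)(t-u)^{2H-2}$, the integral appearing in Assumption~\ref{I5} equals, up to the constant $2H(H-\tfrac12)$, the double integral in the statement. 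Hence it suffices to bound the integrand $\bigl|\mathbb{E}[\partial_x(P_{\theta(u,t)}g)(t,\mathbb{E}^u[B_t])]\bigr|$ uniformly in $u$.

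Next, apply Lemma~\ref{lem:reg} to write $\mathbb{E}[\partial_x(P_{\theta(u,t)}g)(t,\mathbb{E}^u[B_t])]=\rho^{-2}(u,t)\,\mathbb{E}[Y_{u,t}\,g(t,Y_{u,t})]$, with $Y_{u,t}\stackrel{d}{=}N(0,\rho^2(u,t))$ and $\rho^2(u,t):=\sigma^2(u,t)+\theta(u,t)$. The key point, which decouples the $u$- and $t$-integrations, is that by \eqref{eq:sigma} and the definition of $\theta$,
\[
\rho^2(u,t)=\frac{t^{2H}-(t-u)^{2H}}{2H}+\frac{(t-u)^{2H}}{2H}=\frac{t^{2H}}{2H},
\]
independent of $u$ (indeed $Y_{u,t}$ has the law of $B_t$). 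Using $|g(t,z)|\le M(1+|z|^m)$, together with $\mathbb{E}|Y_{u,t}|^k=c_k\,\rho(u,t)^k$ and $\rho(u,t)=(2H)^{-1/2}t^{H}$, one obtains the $u$-free bound $\bigl|\mathbb{E}[\partial_x(P_{\theta(u,t)}g)(t,\mathbb{E}^u[B_t])]\bigr|\lesssim_M c_1\sqrt{2H}\,t^{-H}+c_{m+1}(2H)^{-(m-1)/2}t^{H(m-1)}$.

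Finally, integrate. Since $H>\tfrac12$, $\int_0^t(t-u)^{2H-2}\,du=(2H-1)^{-1}t^{2H-1}$; multiplying by the previous bound and integrating in $t$ leaves the two convergent integrals $\int_0^T t^{H-1}\,dt=T^H/H$ and $\int_0^T t^{Hm+H-1}\,dt=T^{Hm+H}/(Hm+H)$, the exponents exceeding $-1$ because $H>0$. Collecting the constants — and absorbing $(2H-1)^{-1}$ and $M$ into an implicit constant depending on $H$, $T$, $m$ and the polynomial-growth constant of $g$ — produces exactly the displayed estimate, which is in particular finite. I do not expect any genuine obstacle here; the only point that must be handled with care is the cancellation $\sigma^2(u,t)+\theta(u,t)=t^{2H}/(2H)$, which removes the $u$-dependence and keeps every exponent inside the convergent range ($2H-2>-1$, $H-1>-1$, $Hm+H-1>-1$).
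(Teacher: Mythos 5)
Your proof is correct and follows essentially the same route as the paper: apply the Gaussian regression identity of Lemma~\ref{lem:reg}, bound $\mathbb{E}[|Y_{u,t}|\,|g(t,Y_{u,t})|]$ via the polynomial growth and the Gaussian moments $c_k$, and then integrate $(t-u)^{2H-2}$ in $u$ and the resulting powers of $t$ over $[0,T]$. Your observation that $\sigma^2(u,t)+\theta(u,t)=t^{2H}/(2H)$ holds with exact equality (so $Y_{u,t}\stackrel{d}{=}B_t$) is a slightly cleaner way of justifying both the lower and upper bounds on $\rho^2(u,t)$ that the paper invokes, but the argument is otherwise the same.
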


\begin{proof}
  By assumption
\begin{equation} \label{eq:gPolGrowth}
  |g(t,x)|\lesssim_{m,T} (1 + |x|^m),
\end{equation}  
for every $(t,x) \in [0,T]\times \mathbb{R}$,
  
 Lemma~\ref{lem:reg} and \eqref{eq:gPolGrowth}
  % and the polynomial growth of
 % $g$ yield 
\begin{eqnarray*}
 \Big|\mathbb{E}\big[\partial_x(P_{\theta(u,t)} g)(t,\mathbb{E}^u[B_t])\big]\Big|
&\lesssim_{m,T}& \frac{1}{\sigma^2(u,t)+\theta(u,t)} \mathbb{E}\left[|Y_{u,t}|(1+|Y_{u,t}|^m)\right]\\
&=& \frac{1}{\sigma^2(u,t)+\theta(u,t)}\Big( c_1 (\sigma^2(u,t)+\theta(u,t))^{1/2}\\
&+&  c_{m+1} (\sigma^2(u,t)+\theta(u,t))^{\frac{m+1}{2}}\Big) \\
&=& \left( c_1 (\sigma^2(u,t)+\theta(u,t))^{-1/2} + c_{m+1}(\sigma^2(u,t)+\theta(u,t))^{\frac{m-1}{2}}\right),
\end{eqnarray*}
for every $u < t$. Using \eqref{eq:sigma} and $\theta(u,t)=\frac{1}{2H}(t-u)^{2H}$,
\(
 \sigma^2(u,t)+\theta(u,t) \ge \frac{t^{2H}}{2H},
\)
so
\begin{equation*}
 \Big|\mathbb{E}[\partial_x(P_{\theta(u,t)}) g)(t,\mathbb{E}^u[B_t])]\Big|
 \lesssim_{m,T,H} \left( c_1\sqrt{2H}\,t^{-H}
     + c_{m+1}\,(2H)^{-\frac{m-1}{2}}\,t^{H(m-1)}\right).
\end{equation*}
Therefore,
\begin{align*}
 &\int_0^T\!\!\int_0^t 
   \Big|\mathbb{E}[\partial_x(P_{\theta(u,t)} g)(t,\mathbb{E}^u[B_t])]\Big|
   (t-u)^{2H-2}\,du\,dt \\
 &\quad\lesssim_{m,T,H} \int_0^T
 \left( \left( c_1\sqrt{2H}\,t^{-H}
     + c_{m+1}\,(2H)^{-\frac{m-1}{2}}\,t^{H(m-1)}\right)\right)
 \left(\int_0^t (t-u)^{2H-2}\,du\right) dt \\
 &\quad= \int_0^T
 \left( c_1\sqrt{2H}\,t^{-H}
     + c_{m+1}\,(2H)^{-\frac{m-1}{2}}\,t^{H(m-1)}\right) t^{2H-1}\,dt \\
 &\quad= \left( c_1\sqrt{2H}\int_0^T t^{H-1}dt
     + c_{m+1}(2H)^{-\frac{m-1}{2}}\int_0^T t^{Hm+H-1}dt\right) \\
 &\quad= 
 \Bigg[ c_1\sqrt{2H}\,\frac{T^H}{H}
     + c_{m+1}(2H)^{-\frac{m-1}{2}}\frac{T^{Hm+H}}{Hm+H}\Bigg] <\infty,
\end{align*}
for $1 > H>\tfrac12$ and $m\ge 0$.
\end{proof}

Summing up the above result, we obtain the following corollary of Theorem \ref{Koperator}.  

\begin{proposition}
If $g:[0,T] \times \mathbb{R}\rightarrow \mathbb{R}$ is a Borel function with polynomial growth, then $Y_\cdot =g(\cdot,B_\cdot)$ belongs to the domain of the forward stochastic integral. The operator $\mathcal{K}Y$ is expressed as (\ref{calKop}), where 

$$\mathbb{E}^v[Y_t] = \big(P_{\theta(v,t)}f\big) \big(t, \mathbb{E}^v[B_t]\big),\quad \varphi^{(1)}_Y(t,v) = \partial_x \big(P_{\theta(v,t)} g) (t,\mathbb{E}^v[B_t])K(t,v),$$

$$
\varphi^{(2)}_Y(t,v; r)=\partial^2_x \big(P_{\theta(r,t)} g) (t,\mathbb{E}^r[B_t])K(t,r)K(t,v),
$$
for $r < v< t\le T$.
\end{proposition}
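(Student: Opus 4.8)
The plan is to verify that $Y_\cdot = g(\cdot,B_\cdot)$ satisfies each of the hypotheses of Theorem~\ref{Koperator}: once Assumptions~\ref{I0}, \ref{I1}, \ref{I3}, \ref{I4} and \ref{I5} are in force, the representation~\eqref{can} together with the explicit form~\eqref{calKop} of $\mathcal{K}Y$ follows at once, and it only remains to record the closed-form expressions for $\mathbb{E}^v[Y_t]$, $\varphi^{(1)}_Y$ and $\varphi^{(2)}_Y$, which have already been computed in Lemmas~\ref{repstate} and~\ref{firstderSTATE} and in the subsequent lemma identifying the second martingale derivative. Thus the proof is essentially a bookkeeping assembly of the results established earlier in this section.

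First I would dispose of Assumption~\ref{I0}. The field $(r,t)\mapsto\mathbb{E}^r[B_t]=\int_0^r K(t,u)\,dW_u$ has a jointly measurable (indeed a.s. continuous) modification on $\{0\le r\le t\le T\}$; composing it with the continuous maps $(s,t)\mapsto\theta(s,t)$, $(r,x)\mapsto\partial_x(P_{\theta(r,t)}g)(t,x)$ and $(r,x)\mapsto\partial^2_x(P_{\theta(r,t)}g)(t,x)$ --- the heat-kernel smoothing turns the merely Borel $g$ into a function that is $C^\infty$ in the spatial variable, and the required joint continuity follows from the growth bound of Lemma~\ref{growthsec} together with dominated convergence --- shows that $(s,t,\omega)\mapsto\varphi^{(1)}_Y(t,s)$ and $(r,s,t,\omega)\mapsto\varphi^{(2)}_Y(t,s;r)$ are jointly measurable along the respective simplices, while the martingale representations of $Y_t$ and of $\varphi^{(1)}_Y(t,s)$ are precisely those furnished by Lemma~\ref{firstderSTATE} and by the Clark--Ocone computation of $\varphi^{(2)}_Y$. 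Assumption~\ref{I1} is the Remark following Lemma~\ref{firstderSTATE}, itself a consequence of the polynomial growth of $g$ and of Fernique's theorem applied to the Gaussian field $(s,t)\mapsto\mathbb{E}^s[B_t]$ on $\{0\le s\le t\le T\}$.

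Next I would collect the three remaining assumptions from the lemmas dedicated to them. Assumption~\ref{I3} is Lemma~\ref{firstderBOUND}, whose proof rests on the pointwise bound $|\varphi^{(1)}_Y(t,u)|\lesssim_T (t-u)^{-1/2}G$ with $G$ having moments of all orders (via Lemma~\ref{bfder} and Fernique) together with the integrability computation~\eqref{prodhalf}. Assumption~\ref{I4} is the lemma giving $|\varphi^{(2)}_Y(t,v;r)|\,\frac{\partial K}{\partial t}(t,v)\lesssim (t-r)^{-H-\frac{1}{2}}(t-v)^{2H-2}\bigl(1+\sup_{r<s\le T}|\mathbb{E}^r[B_s]|^q\bigr)$, which is integrable in $(v,t)$ for each fixed $r$ and has finite expectation after a further use of Fernique, this bound being a direct consequence of the spatial growth estimate of Lemma~\ref{growthsec}. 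Finally, Assumption~\ref{I5} is Lemma~\ref{checkass5}, obtained through the Gaussian regression identity of Lemma~\ref{lem:reg}. With all five assumptions checked, Theorem~\ref{Koperator} applies and yields both the membership of $Y$ in the domain of the forward integral and the announced expression for $\mathcal{K}Y$.

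The one genuinely delicate point --- and the one the preceding lemmas were built to handle --- is that $g$ is merely Borel with polynomial growth, so the a priori smoothness needed to identify $\varphi^{(1)}_Y$ by the heat-PDE plus It\^o-formula argument of Lemma~\ref{firstderSTATE}, and $\varphi^{(2)}_Y$ by the Malliavin/Clark--Ocone argument, is not available directly. One therefore passes through a standard mollification $g_n$, performs the smooth computation, and lets $n\to\infty$ using the uniform-in-$n$ polynomial growth of $g_n$ and dominated convergence to transfer the closed forms to $g$; the $L^2(\Omega)$-convergence needed to conclude that $\varphi^{(2)}_{Y_n}$ identifies $\varphi^{(2)}_Y$ is exactly~\eqref{exch1}, \eqref{exch2} and \eqref{exch3}. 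The remainder of the argument is routine, amounting to invoking the assumption-verification lemmas in order.
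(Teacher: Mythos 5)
Your proposal is correct and follows essentially the same route as the paper: the paper's proof of this proposition is precisely the assembly of Lemmas \ref{repstate}, \ref{firstderSTATE}, the Clark--Ocone/mollification identification of $\varphi^{(2)}_Y$, and the assumption-verification lemmas (the remark after Lemma \ref{firstderSTATE} for Assumption \ref{I1}, Lemma \ref{firstderBOUND} for Assumption \ref{I3}, the subsequent lemma for Assumption \ref{I4}, and Lemma \ref{checkass5} for Assumption \ref{I5}), followed by an invocation of Theorem \ref{Koperator}. Your additional remarks on joint measurability for Assumption \ref{I0} and on the role of mollification for merely Borel $g$ are consistent with, and slightly more explicit than, what the paper records.
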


\begin{remark}
In general, if $g$ is a Borel function with polynomial growth, then $g(\cdot,B_\cdot)$ does not need to belong to the domain of the adjoint of the gradient operator associated with the Gaussian space of $B$.  
\end{remark}

\section{Path-dependent cases}\label{S4}
In this section, we illustrate Theorem \ref{Koperator} with some examples of adapted processes as integrands. The goal is \textit{not} the presentation of the most general conditions on each class of examples but rather to give a short discussion of stochastic integration beyond the class of the state-dependent cases. Then, for the sake of simplicity, in the sequel, we mostly assume strong integrability conditions. The reader interested in covering the examples of this section under weaker integrability conditions can easily follow the steps below with the obvious modification.   

\subsection{Fractional Martingales} For a given constant $\alpha> -\frac{1}{2}$, let 

$$Y_t = \int_0^t (t-s)^\alpha Z_sdW_s,$$ 
for an $\mathbb{F}$-adapted process $Z$. This type of processes was studied by many authors, see e.g. \cite{hu2009}. We assume that

%\begin{equation}\label{cond6}
%\int_0^T\int_0^t(t-s)^{2\alpha}\mathbb{E}|X_s|^2dsdt < \infty
%\end{equation}

\begin{equation}\label{cond6}
r\mapsto \| Z_r\|_{L^2(\Omega)}
\end{equation} 
is bounded over $[0,T]$. 

\begin{proposition}
If $\alpha > \frac{1}{2}-H$ and (\ref{cond6}) is fulfilled, then $Y$ satisfies the conditions of Corollary \ref{CorMTH}, where the operator $\mathcal{K}Y$ is expressed in terms of 

$$\mathbb{E}^v[Y_t] = \int_0^v (t-s)^\alpha Z_sdW_s,\quad \varphi^{(1)}_Y(t,v) = (t-v)^{\alpha} Z_v$$
and 

$$\varphi^{(2)}_Y(t,v;r) = (t-v)^{\alpha} \varphi^{(1)}_Z(v,r),$$
for $r < v< t\le T$.  
\end{proposition}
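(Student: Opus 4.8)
\noindent\emph{Proof strategy.} The plan is to reduce the statement to Corollary~\ref{CorMTH}: first identify the triple $(\mathbb{E}^v[Y_t],\varphi^{(1)}_Y,\varphi^{(2)}_Y)$, then verify Assumptions~\ref{I0}, \ref{I1}, \ref{I3}, \ref{I5} and (\ref{I6}), keeping careful track of the powers of $(t-s)$ throughout. First I would record the martingale representation. Since $r\mapsto\|Z_r\|_{L^2(\Omega)}$ is bounded and $2\alpha+1>0$ (which is implied by $\alpha>\tfrac12-H$, because $H<1$), the integral $Y_t=\int_0^t(t-s)^\alpha Z_s\,dW_s$ is a well-defined, centered element of $L^2(\Omega)$, so by uniqueness of the It\^o representation $\varphi^{(1)}_Y(t,s)=(t-s)^\alpha Z_s$ for a.e.\ $s<t$ and $\mathbb{E}^v[Y_t]=\int_0^v(t-s)^\alpha Z_s\,dW_s$. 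Inserting the It\^o representation $Z_v=\mathbb{E}[Z_v]+\int_0^v\varphi^{(1)}_Z(v,r)\,dW_r$ into $\varphi^{(1)}_Y(t,v)=(t-v)^\alpha Z_v$ gives, by uniqueness again, $\mathbb{E}[\varphi^{(1)}_Y(t,v)]=(t-v)^\alpha\mathbb{E}[Z_v]$ and $\varphi^{(2)}_Y(t,v;r)=(t-v)^\alpha\varphi^{(1)}_Z(v,r)$; in particular $\bar\varphi^{(1)}_Y(x,v)=(x-v)^\alpha\bar Z_v$. Assumption~\ref{I0} for $Y$ then follows from the corresponding regularity of $Z$ and of its martingale derivative $\varphi^{(1)}_Z$ (a standing-type hypothesis, cf.\ Assumption~\ref{I0}) together with continuity of $(t,s)\mapsto(t-s)^\alpha$ on the open simplex.

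Next I would dispatch the scalar integrability conditions. Assumption~\ref{I1} holds because $\int_0^T\|Y_t\|^2_{L^2(\Omega)}\,dt=\int_0^T\int_0^t(t-s)^{2\alpha}\|Z_s\|^2_{L^2(\Omega)}\,ds\,dt\lesssim\int_0^T t^{2\alpha+1}\,dt<\infty$. For Assumption~\ref{I5}, write $\frac{\partial K}{\partial t}(t,u)=c_H(t-u)^{H-\frac32}$ and bound $|\mathbb{E}[\varphi^{(1)}_Y(t,u)]|=(t-u)^\alpha|\mathbb{E}[Z_u]|\lesssim(t-u)^\alpha$, so the double integral in \ref{I5} is controlled by $\int_0^T\int_0^t(t-u)^{\alpha+H-\frac32}\,du\,dt$, which is finite precisely when $\alpha+H-\tfrac32>-1$, i.e.\ $\alpha>\tfrac12-H$. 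The same mechanism governs Assumption~(\ref{I6}): since $\|\mathbb{E}^{v_1\wedge v_2}[\bar\varphi^{(1)}_Y(x_i,v_i)]\|_{L^2(\Omega)}=(x_i-v_i)^\alpha\|\mathbb{E}^{v_1\wedge v_2}[\bar Z_{v_i}]\|_{L^2(\Omega)}\le(x_i-v_i)^\alpha\|Z_{v_i}\|_{L^2(\Omega)}\lesssim(x_i-v_i)^\alpha$ and $\frac{\partial K}{\partial x_i}(x_i,v_i)=c_H(x_i-v_i)^{H-\frac32}$, the nonnegative integrand over $J$ in (\ref{I6}) is dominated, after enlarging $J$ to the product of two copies of the simplex $\{0<v<x<T\}$, by $\left(\int_{0<v<x<T}(x-v)^{\alpha+H-\frac32}\,dv\,dx\right)^2$, again finite exactly when $\alpha>\tfrac12-H$. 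Thus the threshold in the statement is pinned down by Assumptions~\ref{I5} and (\ref{I6}).

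It remains to verify Assumption~\ref{I3}. I would take the conjugate pair $p=q=2$ and write $\|\varphi^{(1)}_Y(x_1,r)\varphi^{(1)}_Y(x_2,r)\|_{L^2(\Omega)}=(x_1-r)^\alpha(x_2-r)^\alpha\|Z_{x_1}Z_{x_2}\|_{L^2(\Omega)}$; combined with the bound $\sup_{\epsilon\in(0,1)}\|\mathbf{D}^\epsilon_{x_1,x_2;r}B\|_{L^2(\Omega)}\le\left(\int_0^r(x_1-s)^{2(H-1)}(x_2-s)^{2(H-\frac32)}\,ds\right)^{1/2}$ of Remark~\ref{supq2}, the required triple integral is estimated exactly as in~(\ref{prodhalf}) in the proof of Lemma~\ref{firstderBOUND}, with the exponent $-\tfrac12$ there replaced by $\alpha$; since $\alpha>\tfrac12-H>-\tfrac12$ the integrand is even less singular than in that lemma, so finiteness follows. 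The one ingredient not covered by (\ref{cond6}) is the boundedness of $\|Z_{x_1}Z_{x_2}\|_{L^2(\Omega)}$, which requires $Z$ a little more than $L^2$-bounded (e.g.\ $\sup_t\|Z_t\|_{L^4(\Omega)}<\infty$, or $L^\infty$-boundedness together with the choice $p=\infty$, $q=1$); this is exactly the kind of strengthening of (\ref{cond6}) announced at the start of Section~\ref{S4}. With all five assumptions in hand, Corollary~\ref{CorMTH} delivers (\ref{can})--(\ref{calKop}) with the stated $\mathbb{E}^v[Y_t]$, $\varphi^{(1)}_Y$ and $\varphi^{(2)}_Y$.

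I do not foresee a deep obstacle: the argument is essentially bookkeeping of exponents built on the explicit form of $\varphi^{(1)}_Y$ and $\varphi^{(2)}_Y$. The two points requiring care are (i) using $\alpha>\tfrac12-H$ in its sharp form in Assumptions~\ref{I5} and (\ref{I6}) (the weaker $\alpha>-\tfrac12$ would only secure \ref{I1} and \ref{I3}), and (ii) being transparent that \ref{I3} relies on an integrability assumption on $Z$ slightly stronger than (\ref{cond6}); alternatively one could verify Assumption~\ref{I4} directly from $\varphi^{(2)}_Y(t,v;r)=(t-v)^\alpha\varphi^{(1)}_Z(v,r)$ and apply Theorem~\ref{Koperator}, which leads to the same exponent count and the same threshold $\alpha>\tfrac12-H$.
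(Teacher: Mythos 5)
Your proposal is correct and follows essentially the same route as the paper: identify $\varphi^{(1)}_Y(t,v)=(t-v)^{\alpha}Z_v$ and $\varphi^{(2)}_Y(t,v;r)=(t-v)^{\alpha}\varphi^{(1)}_Z(v,r)$ by uniqueness of the It\^o representation, then verify Assumptions \ref{I1}, \ref{I3}, \ref{I5} and \ref{I6} by exponent counting, invoking Remark \ref{supq2} and the computation (\ref{prodhalf}) for Assumption \ref{I3} and locating the threshold $\alpha>\tfrac12-H$ in Assumptions \ref{I5} and \ref{I6}, exactly as the paper does. Your observation that Assumption \ref{I3} with $p=q=2$ requires control of $\|Z_r^2\|_{L^2(\Omega)}=\|Z_r\|^2_{L^4(\Omega)}$ rather than of $\|Z_r\|_{L^2(\Omega)}$ is well taken (note both factors are evaluated at the same time $r$, so the quantity is $\|Z_r^2\|_{L^2(\Omega)}$ and not $\|Z_{x_1}Z_{x_2}\|_{L^2(\Omega)}$ as you wrote): the paper's own verification writes $\|Z_r\|_{L^2(\Omega)}$ at that point and thus implicitly uses a strengthening of (\ref{cond6}), precisely the kind announced at the start of Section \ref{S4} and which you anticipate.
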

\begin{proof}

  Let $A= \{ (r,x_1,x_2); 0 < r < x_1\wedge x_2, x_1\neq x_2 \}.$ We start by checking
  Assumption \ref{I3}. For this purpose, we need to estimate
$$(x_1-r)^{\alpha}(x_2-r)^\alpha\| Z_r\|_{L^2(\Omega)}  \sup_{\epsilon \in (0,1)}\| \mathbf{D}^\epsilon_{x_1,x_2;r}B\|_{L^2(\Omega))}; \quad r < x_1\wedge x_2, x_1 \neq x_2.$$ 
From Remark \ref{supq2}, we know that 

\begin{eqnarray*}
\sup_{\epsilon \in (0,1)}\big\|\mathbf{D}^\epsilon_{x_1,x_2;r}B\big\|^2_{L^2(\Omega)}&\le& \big\|\mathbf{D}_{x_1,x_2;r}B\big\|^2_{L^2(\Omega)}\\
&=& \int_0^r (x_1-s)^{2(H-1)}(x_2-s)^{2(H-\frac{3}{2})}ds,
\end{eqnarray*}
for $r < x_1\wedge x_2 \le T$ with $x_1\neq x_2$. Then, 

$$\int_A (x_1-r)^{\alpha}(x_2-r)^\alpha \| Z_r\|_{L^2(\Omega)}  \sup_{\epsilon \in (0,1)}\| \mathbf{D}^\epsilon_{x_1,x_2;r}B\|_{L^2(\Omega)}drdx_1dx_2$$
$$\lesssim_{\|Z_\cdot\|_{L^2(\Omega)}} \int_A (x_1-r)^{\alpha}(x_2-r)^\alpha \Bigg(\int_0^r (x_1-s)^{2(H-1)}(x_2-s)^{2(H-\frac{3}{2})}ds\Bigg)^{\frac{1}{2}}drdx_1dx_2<\infty, $$
for $\alpha > -\frac{1}{2}$. See (\ref{prodhalf}) for details.  

Next, 
$$
\int_0^T\int_r^T (t-r)^{2\alpha}\mathbb{E}|Z_r|^2dtdr = \int_0^T\int_0^t (t-r)^{2\alpha}\mathbb{E}|Z_r|^2drdt   < \infty,
$$
for $2 > \frac{1}{H}$. Then Assumption \ref{I1} is fulfilled. 

%$$\int_{[0,T]^2} \big\|Y_{x_1}\|_{L^2(\mathbb{P})} \big\|Y_{x_2}\|_{L^2(\mathbb{P})} \frac{\partial^2 R}{\partial x_1\partial x_2}(x_1,x_2)dx_1dx_2$$
% $$ = \int_{[0,T]^2} \Bigg(\int_0^{x_1} (x_1-r)^{2\alpha}\mathbb{E}|X_r|^2dr\Bigg)^{\frac{1}{2}} \Bigg(\int_0^{x_2} (x_2-r)^{2\alpha}\mathbb{E}|X_r|^2dr\Bigg)^{\frac{1}{2}}$$
% $$\times \frac{\partial^2 R}{\partial x_1\partial x_2}(x_1,x_2)dx_1dx_2$$
%$$\lesssim_{\alpha,X} \int_{[0,T]^2} \frac{\partial^2 R}{\partial x_1\partial x_2}(x_1,x_2)dx_1dx_2 < \infty.$$
%Then Assumption \ref{I2} is fulfilled. 

Now, we observe (recall (\ref{mumeasure}) and (\ref{Jset})) that 

$$\int_J \big\| \mathbb{E}^{v_1\wedge v_2} \big[ \bar{\varphi}^{(1)}_Y(x_1,v_1) \big] \big\|_{L^2(\Omega)} \big\| \mathbb{E}^{v_1\wedge v_2} \big[ \bar{\varphi}^{(1)}_Y(x_2,v_2) \big] \big\|_{L^2(\Omega)}\mu(dv_1dv_2dx_1dx_2)$$
$$= \int_J (x_1-v_1)^{\alpha}  (x_2-v_2)^{\alpha}\big\| \mathbb{E}^{v_1\wedge v_2} \big[ Z_{v_1} \big] \big\|_{L^2(\Omega)} \big\| \mathbb{E}^{v_1\wedge v_2} \big[ Z_{v_2} \big] \big\|_{L^2(\Omega)}\mu(dv_1dv_2dx_1dx_2)$$
$$\lesssim_{\|Z_\cdot\|_{L^2(\Omega)}} \int_J (x_1-v_1)^{\alpha}  (x_2-v_2)^{\alpha}\mu(dv_1dv_2dx_1dx_2)< \infty,$$
if $\alpha > \frac{1}{2}-H$ (see (\ref{prodhalf})). Then, by Lemma \ref{bassa3}, Assumption
\ref{I4} is fulfilled. Next,

$$\int_0^T \int_0^t\Big|\mathbb{E}[\varphi^{(1)}_Y(t,u)]\Big| \frac{\partial K}{\partial t}(t,u)dudt$$
$$ = \int_0^T \int_0^t (t-u)^{\alpha}|\mathbb{E}[Z_u]| \frac{\partial K}{\partial t}(t,u)dudt\lesssim_{H,\| Z_\cdot\|_{L^1(\Omega)}}\int_0^T \int_0^t (t-u)^{\alpha+ H-\frac{3}{2}}dudt< \infty.$$
This concludes the proof.  
\end{proof}

\subsection{Integral process} Let $Y$ be a square-integrable process in the domain of the forward integral and we set 

$$\mathcal{I}Y_t:= \int_0^t Y_sd^-B_s.$$ 
We want to investigate the existence of 

$$\int_0^T \mathcal{I}Y_td^-B_t$$
so that

\begin{eqnarray*}
\varphi^{(1)}_{\mathcal{I}Y}(t,r)=\mathcal{K}Y(t,r)&=& \int_r^t \Bigg\{\mathbb{E}^r[Y_u] \frac{\partial K}{\partial u} (u,r) + \varphi_Y^{(1)}(u,r)\mathcal{D}_{r;u}B\\
\nonumber&+& \int_r^u \varphi^{(2)}_Y(u,v; r) \frac{\partial K}{\partial u}(u,v)dv\Bigg\}du,
\end{eqnarray*}
for $0\le r< t$. We need to check Assumptions \ref{I1}, \ref{I3}, \ref{I4} and \ref{I5}.

 We first treat Assumption \ref{I3} and we choose as conjugate exponents $p=q=2$. There are nine components in $\|\varphi^{(1)}_{\mathcal{I}Y}(x_1,r)\varphi^{(1)}_{\mathcal{I}Y}(x_2,r)\|_{L^p(\Omega)}$ which we split it into three blocks. 

\

\text{BL 1:}

\begin{small}
$$\int_r^{x_1} \mathbb{E}^r[Y_u] \frac{\partial K}{\partial u} (u,r)du \int_r^{x_2} \mathbb{E}^r[Y_u] \frac{\partial K}{\partial u} (u,r)du+ \int_r^{x_1} \mathbb{E}^r[Y_u] \frac{\partial K}{\partial u} (u,r)du \int_r^{x_2}\varphi_Y^{(1)}(u,r)\mathcal{D}_{r;u}Bdu$$
$$+ \int_r^{x_1} \mathbb{E}^r[Y_u] \frac{\partial K}{\partial u} (u,r)du \int_r^{x_2}\int_r^u \varphi^{(2)}_Y(u,v; r) \frac{\partial K}{\partial u}(u,v)dvdu=\sum_{n=1}^3 I_n(r,x_1,x_2)$$
\end{small}

\

\text{BL 2:}

\begin{small}
$$\int_r^{x_1}\varphi_Y^{(1)}(u,r)\mathcal{D}_{r;u}Bdu\int_r^{x_2} \mathbb{E}^r[Y_u] \frac{\partial K}{\partial u} (u,r)du 
+ \int_r^{x_1}\varphi_Y^{(1)}(u,r)\mathcal{D}_{r;u}Bdu \int_r^{x_2}\varphi_Y^{(1)}(u,r)\mathcal{D}_{r;u}Bdu$$
$$+ \int_r^{x_1}\varphi_Y^{(1)}(u,r)\mathcal{D}_{r;u}Bdu \int_r^{x_2}\int_r^u \varphi^{(2)}_Y(u,v; r) \frac{\partial K}{\partial u}(u,v)dvdu=\sum_{n=1}^3 J_n(r,x_1,x_2)$$
\end{small}

\text{BL 3:}

\begin{small}
$$\int_r^{x_1}\int_r^u \varphi^{(2)}_Y(u,v; r) \frac{\partial K}{\partial u}(u,v)dvdu\int_r^{x_2} \mathbb{E}^r[Y_u] \frac{\partial K}{\partial u} (u,r)du $$
$$+  \int_r^{x_1}\int_r^u \varphi^{(2)}_Y(u,v; r) \frac{\partial K}{\partial u}(u,v)dvdu\int_r^{x_2}\varphi_Y^{(1)}(u,r)\mathcal{D}_{r;u}Bdu$$
$$+ \int_r^{x_1}\int_r^u \varphi^{(2)}_Y(u,v; r) \frac{\partial K}{\partial u}(u,v)dvdu \int_r^{x_2}\int_r^u \varphi^{(2)}_Y(u,v; r) \frac{\partial K}{\partial u}(u,v)dvdu=\sum_{n=1}^3 L_n(r,x_1,x_2).$$
\end{small}

To simplify the presentation, let us assume the triple $(Y,\varphi^{(1)}_Y,\varphi^{(2)}_Y)$ is bounded. We claim that 
\begin{equation}\label{claimb} 
(r,x_1,x_2)\mapsto \|\varphi^{(1)}_{\mathcal{I}Y}(x_1,r)\varphi^{(1)}_{\mathcal{I}Y}(x_2,r)\|_{L^2(\Omega)} = \|\mathcal{K}Y(x_1,r)\mathcal{K}Y(x_2,r)\|_{L^2(\Omega)}
\end{equation}
is bounded over the set $\{(r,x_1,x_2); r < x_1\wedge x_2\le T\}$.

Let us analyze BL 1. We notice that   
$$\mathbb{E}|I_1(r,x_1,x_2)|^2\le \| Y\|^4_\infty \Bigg( \int_r^{x_1} \int_r^{x_2} (u_1-r)^{H-\frac{3}{2}} (u_2-r)^{H-\frac{3}{2}}du_1du_2\Bigg)^2\lesssim_{H,T}\| Y\|^4_\infty.$$
Jensen's inequality yields
\begin{eqnarray*}
\mathbb{E}|I_2(r,x_1,x_2)|^2&\lesssim_{H,T}&\|Y\|^2_\infty \mathbb{E}\Bigg|\int_r^{x_2}\varphi_Y^{(1)}(u,r)\mathcal{D}_{r;u}Bdu\Bigg|^2\\
&\le& \|Y\|^2_\infty \| \varphi^{(1)}_Y\|^2_\infty \mathbb{E}\int_{r}^{x_2}|\mathcal{D}_{r;u}B|^2du\\
&=& \|Y\|^2_\infty \| \varphi^{(1)}_Y\|^2_\infty \int_r^{x_2}(u-r)^{2H-2}du\\
&\lesssim_{H,T}& \|Y\|^2_\infty \| \varphi^{(1)}_Y\|^2_\infty. 
\end{eqnarray*}
Moreover, 
\begin{eqnarray*}
\mathbb{E}|I_3(r,x_1,x_2)|^2&\le&\|Y\|^2_\infty\| \varphi^{(2)}_Y\|^2_\infty \Bigg( \int_r^{x_1}(u-r)^{H-\frac{3}{2}}du \Bigg)^2\\
&\times & \Bigg( \int_r^{x_2} \int_r^u (u-v)^{H-\frac{3}{2}}dvdu \Bigg)^2  \\
&\lesssim_{H,T}& \|Y\|^2_\infty \| \varphi^{(2)}_Y\|^2_\infty.
\end{eqnarray*}
Let us analyze BL 2. The analysis of $J_1$ is equal to $I_2$. Recall that for any $\gamma>1$, we have 
$$\mathbb{E}|\mathcal{D}_{r;u}B|^{2\gamma}\lesssim_\gamma \Bigg(\int_0^r (u-s)^{2H-3}ds\Bigg)^{\frac{2\gamma}{2}}\lesssim_{H,T} (u-r)^{2\gamma(H-1)}.$$
Then, a simple application of Jensen and H\"older inequalities yield 
\begin{eqnarray*}
\mathbb{E}|J_2(r,x_1,x_2)|^2&\le& \| \varphi^{(1)}_Y\|^2_\infty \mathbb{E}\Bigg( \int_r^{x_1} |\mathcal{D}_{r;u}B|du \int_r^{x_2} |\mathcal{D}_{r;u}B|du\Bigg)^2\\
&=& \| \varphi^{(1)}_Y\|^2_\infty \mathbb{E}\Bigg( \int_r^{x_1}\int_r^{x_2} |\mathcal{D}_{r;u_1}B| |\mathcal{D}_{r;u_2}B|du_1du_2\Bigg)^2\\
&\le& \|\varphi^{(1)}_Y\|^2_\infty  \mathbb{E}\int_r^{x_1}\int_r^{x_2} |\mathcal{D}_{r;u_1}B|^2 |\mathcal{D}_{r;u_2}B|^2du_1du_2\\
&\le& \|\varphi^{(1)}_Y\|^2_\infty \int_r^{x_1}\int_r^{x_2} \|\mathcal{D}_{r;u_1}B\|^2_{L^{2\alpha}(\Omega)} \|\mathcal{D}_{r;u_2}B\|^2_{L^{2\beta}(\Omega)}du_1du_2\\
&\lesssim_{H,T,\alpha,\beta}& \|\varphi^{(1)}_Y\|^2_\infty \int_r^{x_1}\int_r^{x_2} (u_1-r)^{2H-2} (u_2-r)^{2H-2}  du_1du_2\\
&\lesssim_{H,T,\alpha,\beta}& \|\varphi^{(1)}_Y\|^2_\infty. 
\end{eqnarray*}
Moreover, 
\begin{eqnarray*}
\mathbb{E}|J_3(r,x_1,x_2)|^2&\lesssim_{H,T}& \| \varphi^{(1)}_Y\|^2_\infty \| \varphi^{(2)}_Y\|^2_\infty   \int_r^{x_1} \mathbb{E}|\mathcal{D}_{r;u}B|^2du\\
&\lesssim_{H,T}&\| \varphi^{(1)}_Y\|^2_\infty \| \varphi^{(2)}_Y\|^2_\infty. 
\end{eqnarray*}
Let us analyze BL 3. The analyses of $L_2$ and $L_1$ are equal to $J_3$ and $I_3$, respectively. It remains to analyze $L_3$ and this can be estimated as follows:
\begin{eqnarray*}
\mathbb{E}|L_3(r,x_1,x_2)|^2&\lesssim_{H,T}& \| \varphi^{(2)}_Y\|^4_\infty. 
\end{eqnarray*}
All the above estimates hold uniformly w.r.t. $r < x_1\wedge x_2 \le T$. Summing all the above steps of the analysis of BL 1, BL 2 and BL 3, we conclude (\ref{claimb}). By using the fact we can take $2 < \frac{2}{\frac{3}{2}-H}$ in Remark \ref{intDeps}, we then conclude
to the validity of Assumption \ref{I3}. 

%We need to evaluate 
%$$\int_{[0,T]^2} \big\|\mathcal{I}Y_{x_1}\|_{L^2(\mathbb{P})} \big\|\mathcal{I}Y_{x_2}\|_{L^2(\mathbb{P})} \frac{\partial^2 R}{\partial x_1\partial x_2}(x_1,x_2)dx_1dx_2.$$
%By Theorem \ref{Koperator} and (\ref{claimb}), this is actually equal to 

%$$\int_{[0,T]^2} \Bigg(\int_0^{x_1}\mathbb{E}|\mathcal{K}Y (x_1,s)|^2ds\Bigg)^{\frac{1}{2}} \Bigg(\int_0^{x_2}\mathbb{E}|\mathcal{K}Y (x_2,s)|^2ds\Bigg)^{\frac{1}{2}} \frac{\partial^2 R}{\partial x_1\partial x_2}(x_1,x_2)dx_1dx_2 < \infty.$$

In order to check Assumption
\ref{I4}, we make use of Lemma \ref{bassa3}. A totally similar analysis to the proof of the claim (\ref{claimb}) yields that 
$$(v_1,v_2,x_1,x_2)\mapsto \big\|\overline{\mathcal{K}Y} (x_1,v_1)\big\|_{L^2(\Omega)} \big\|\overline{\mathcal{K}Y} (x_2,v_2)\big\|_{L^2(\Omega)}$$
is bounded over $J$, where $\overline{\mathcal{K}Y} (x_1,v_1) = \mathcal{K}Y (x_1,v_1) - \mathbb{E}[\mathcal{K}Y (x_1,v_1)]$. Since $\mu$ is a finite positive measure, we then have  
$$\int_{J} \big\|\overline{\mathcal{K}Y} (x_1,v_1)\big\|_{L^2(\mathbb{P})} \big\|\overline{\mathcal{K}Y} (x_2,v_2)\big\|_{L^2(\mathbb{P})}\mu(dv_1dv_2dx_1dx_2) < \infty,$$
where $J$ and $\mu$ are given by (\ref{Jset}) and (\ref{mumeasure}), respectively. Again the boundedness assumption on $(Y, \varphi^{(1)}_Y, \varphi^{(2)}_Y)$ and Theorem \ref{Koperator} imply 
$$\int_0^T \text{Var}\Bigg(\int_0^tY_sd^-B_s\Bigg)dt=\int_0^T\mathbb{E}\int_r^t |\mathcal{K}Y(t,r)|^2drdt < \infty.
$$
This implies that Assumption \ref{I1} is fulfilled. Similarly, 
$$\int_0^T \int_0^t\Big|\mathbb{E}[\varphi^{(1)}_{\mathcal{I}Y}(t,u)]\Big| \frac{\partial K}{\partial t}(t,u)dudt=\int_0^T \int_0^t \Big|\mathbb{E}\big[\mathcal{K}Y(t,u)\big]\Big| \frac{\partial K}{\partial t}(t,u)dudt<\infty.$$
Then, we arrive at the following result. 

\begin{proposition}\label{lastprop}
Assume that $(Y,\varphi^{(1)}_Y,\varphi^{(2)}_Y)$ are bounded. Then, $\mathcal{I}Y_t= \int_0^t Y_sd^-B_s$ satisfies 
\begin{eqnarray*}
\int_0^T \mathcal{I}Y_t d^-B_t &=& \int_0^T \int_0^s \mathbb{E}[\mathcal{K}Y(s,u)]\frac{\partial K}{\partial s} (s,u)duds\\
\nonumber&+& \int_0^T \mathcal{K}\mathcal{I}Y(T,r)dW_r,
\end{eqnarray*}
where 
\begin{eqnarray*}
\mathcal{K}\mathcal{I}Y(T,r)&=& \int_r^T \Bigg\{\mathbb{E}^r[\mathcal{I}Y_t] \frac{\partial K}{\partial t} (t,r) + \mathcal{K}Y(t,r)\mathcal{D}_{r;t}B\\
\nonumber&+& \int_r^t \varphi^{(2)}_{\mathcal{I}Y}(t,v; r) \frac{\partial K}{\partial t}(t,v)dv\Bigg\}dt,
\end{eqnarray*}
for $0\le r< T$, where $\varphi^{(1)}_{\mathcal{I}Y} = \mathcal{K}Y$ and $\varphi^{(2)}_{\mathcal{I}Y}$ is the second martingale derivative of $\mathcal{I}Y$. 
\end{proposition}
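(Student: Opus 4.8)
The plan is to apply Theorem \ref{Koperator} twice. First we apply it to $Y$ itself, restricted to $[0,t]$, to obtain the Itô representation of the $\mathcal{F}_t$-measurable variable $\mathcal{I}Y_t = \int_0^t Y_s\,d^-B_s$ and thereby identify $\varphi^{(1)}_{\mathcal{I}Y}$ and $\varphi^{(2)}_{\mathcal{I}Y}$; then we apply it to the process $\mathcal{I}Y=\{\mathcal{I}Y_t; 0\le t\le T\}$ itself to produce the two displayed formulas. Since the triple $(Y,\varphi^{(1)}_Y,\varphi^{(2)}_Y)$ is bounded, $Y$ satisfies Assumptions \ref{I0}, \ref{I1}, \ref{I3}, \ref{I4}, \ref{I5} on $[0,T]$ — for \ref{I3} using Remark \ref{intDeps} with $q=2<\tfrac{2}{3/2-H}$, and for the remaining ones using that $\tfrac{\partial K}{\partial t}(t,u)=c_H(t-u)^{H-3/2}$ is integrable along the relevant simplices — hence also on every $[0,t]$. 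Theorem \ref{Koperator} then gives
$$\mathcal{I}Y_t = \int_0^t\int_0^s \mathbb{E}[\varphi^{(1)}_Y(s,u)]\frac{\partial K}{\partial s}(s,u)\,du\,ds + \int_0^t \mathcal{K}Y(t,r)\,dW_r,$$
with $\mathcal{K}Y(t,r)$ as in (\ref{calKop}) with upper limit $t$. By uniqueness of the Itô representation, $\mathbb{E}[\mathcal{I}Y_t]$ equals the first term and $\varphi^{(1)}_{\mathcal{I}Y}(t,r)=\mathcal{K}Y(t,r)$, while $\varphi^{(2)}_{\mathcal{I}Y}(t,v;r)$ is the second martingale derivative obtained by applying the Itô representation to the $\mathcal{F}_v$-measurable variable $\mathcal{K}Y(t,v)$ for fixed $t$; a jointly measurable version exists by a standard measurable-selection argument, which together with the explicit jointly measurable formula for $\mathcal{K}Y$ gives Assumption \ref{I0} for $\mathcal{I}Y$.

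The second step requires checking that $\mathcal{I}Y$ satisfies Assumptions \ref{I0}, \ref{I1}, \ref{I3}, \ref{I4}, \ref{I5}, which is exactly the verification carried out in the paragraphs preceding the statement. The crux — and the only place genuine work is needed — is Assumption \ref{I3} for $\mathcal{I}Y$, i.e. the boundedness of $(r,x_1,x_2)\mapsto \|\mathcal{K}Y(x_1,r)\mathcal{K}Y(x_2,r)\|_{L^2(\Omega)}$ on $\{r<x_1\wedge x_2\le T\}$, which is claim (\ref{claimb}); the argument expands $\mathcal{K}Y$ into its three constituent terms, splits the nine resulting products into the blocks BL 1, BL 2, BL 3, and estimates each via Jensen's, Hölder's and Burkholder--Davis--Gundy's inequalities together with $\mathbb{E}|\mathcal{D}_{r;u}B|^{2\gamma}\lesssim_{\gamma,H,T}(u-r)^{2\gamma(H-1)}$ and the explicit form of $\tfrac{\partial K}{\partial t}$; combining this with Remark \ref{intDeps} yields Assumption \ref{I3}. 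Assumption \ref{I1} then follows because Theorem \ref{Koperator} (first step) gives $\int_0^T \mathrm{Var}(\mathcal{I}Y_t)\,dt=\int_0^T \mathbb{E}\int_r^t|\mathcal{K}Y(t,r)|^2\,dr\,dt<\infty$; Assumption \ref{I5} follows from $\int_0^T\int_0^t|\mathbb{E}[\mathcal{K}Y(t,u)]|\tfrac{\partial K}{\partial t}(t,u)\,du\,dt<\infty$; and Assumption \ref{I4} is secured through Lemma \ref{bassa3}: a verbatim repetition of the BL-block analysis shows $(v_1,v_2,x_1,x_2)\mapsto \|\overline{\mathcal{K}Y}(x_1,v_1)\|_{L^2(\Omega)}\|\overline{\mathcal{K}Y}(x_2,v_2)\|_{L^2(\Omega)}$ is bounded on $J$, and since $\mu$ is a finite positive measure the integral in Assumption \ref{I6}, hence Assumption \ref{I4}, is finite.

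Finally, with Assumptions \ref{I0}, \ref{I1}, \ref{I3}, \ref{I4}, \ref{I5} established for $\mathcal{I}Y$, Theorem \ref{Koperator} applied to $\mathcal{I}Y$ yields (\ref{can})--(\ref{calKop}) with $Y$ replaced by $\mathcal{I}Y$; substituting $\varphi^{(1)}_{\mathcal{I}Y}=\mathcal{K}Y$ into the mean term and into the definition (\ref{calKop}) of the operator then gives precisely the two displayed identities of the statement. I expect the main obstacle to be exactly the uniform $L^2(\Omega)$-bound (\ref{claimb}) on the products of the operator $\mathcal{K}Y$ (and its barred version on $J$), since this is the step that forces the boundedness hypothesis on $(Y,\varphi^{(1)}_Y,\varphi^{(2)}_Y)$; under weaker integrability assumptions one would instead have to track the precise Hölder exponents produced by the kernels $\tfrac{\partial K}{\partial t}$ and $\mathcal{D}B$ and check their integrability over the relevant simplices, which is longer but routine.
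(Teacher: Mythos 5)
Your proposal is correct and follows essentially the same route as the paper: apply Theorem \ref{Koperator} on $[0,t]$ to identify $\varphi^{(1)}_{\mathcal{I}Y}=\mathcal{K}Y$, then verify Assumptions \ref{I1}, \ref{I3}, \ref{I4}, \ref{I5} for $\mathcal{I}Y$ via the uniform $L^2(\Omega)$-bound (\ref{claimb}) on products of $\mathcal{K}Y$ (the nine-term block decomposition BL 1--BL 3 estimated with Jensen, H\"older and Burkholder--Davis--Gundy), Lemma \ref{bassa3} for Assumption \ref{I4}, and Remark \ref{intDeps} with $q=2$, before applying Theorem \ref{Koperator} a second time to $\mathcal{I}Y$. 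You correctly identify (\ref{claimb}) as the crux and the reason for the boundedness hypothesis on $(Y,\varphi^{(1)}_Y,\varphi^{(2)}_Y)$, which is exactly where the paper concentrates its effort.
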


\begin{remark}
One typical example of process $Y$ satisfying the assumption of Proposition \ref{lastprop} is given by $Y = f(B)$, where $f \in C^2_b$ and bounded. 
\end{remark}

\noindent \textbf{ACKWNOLEDGMENTS.}  The research of AO was partially supported by Projeto Universal CNPq 408884/2023-1.
The research of FR was also partially supported by the  ANR-22-CE40-0015-01 project (SDAIM).

\bibliographystyle{acm}
\begin{quote}
\bibliography{refORV.bib}%{}
\end{quote}

%\bibliographystyle{acm}
%\begin{quote}
%\bibliography{../../../BIBLIO_FILE_ORV/refORV.bib}%{}
%\end{quote}

\end{document}